\title{The biequivalence of path categories and axiomatic Martin-L\"of type theories}
\author
    {Dani\"el Otten}
    {ILLC, University of Amsterdam, The Netherlands \and \url{https://www.otten.co/}}
    {daniel@otten.co}
    {https://orcid.org/0000-0003-2557-3959}
    {This publication is part of the project “The Power of Equality” (with project number OCENW.M20.380) of the research programme Open Competition Science M 2 which is (partly) financed by the Dutch Research Council (NWO).}
\author
    {Matteo Spadetto}
    {DMIF, University of Udine, Italy \and LS2N, University of Nantes, France \and \url{https://spadetto.github.io/}}
    {matteo.spadetto.42@gmail.com}
    {https://orcid.org/0000-0002-6495-7405}
    {The author was supported by an E-COST-CA20111 Short-Term Scientific Mission Grant --- that funded his research visit to Amsterdam in Spring 2023, where and when this research started --- a School of Mathematics full-time EPSRC Doctoral Training Partnership Studentship 2019/2020, the Italian MUR PRIN 2022 “STENDHAL”, and  eOTP RECIPROG.}
\authorrunning{D. Otten and M. Spadetto}
\keywords{Axiomatic type theory, cubical type theory, propositional equality, biequivalence, display map categories, path categories, homotopy theory, coherence}
\newcommand\disp{\mathrel{\hspace{-0.175pt}-\raisebox{0.8pt}{\(\scriptstyle\hspace{-1.2pt}\triangleright\)}}}
\newcommand\fib{\mathrel{\hspace{-0.175pt}-\raisebox{0.8pt}{\(\scriptstyle\hspace{-1.2pt}\triangleright\hspace{-0.8pt}\triangleright\)}}}
\newcommand\tto{\mathrel{\smash{\ooalign{\raisebox{3pt}{\(\scriptstyle\sim\)}\cr\(\to\)}}}}
\newcommand\tdisp{\mathrel{\smash{\ooalign{\raisebox{3pt}{\(\scriptstyle\sim\)}\cr\(\disp\)}}}}
\newcommand\tfib{\mathrel{\smash{\ooalign{\raisebox{3pt}{\(\scriptstyle\sim\)}\cr\(\fib\)}}}}
\newcommand\iso{\mathrel{\smash{\ooalign{\raisebox{3pt}{\(\scriptstyle\sim\)}\cr\(\to\)\cr\raisebox{-2pt}{\hspace{-0.175pt}\scalebox{0.85}[1]{\(-\)}}}}}}
\newcommand\from\leftarrow
\newcommand\To\Rightarrow
\newcommand\From\Leftarrow
\newcommand\longto\longrightarrow
\newcommand\longfrom\longleftarrow
\newcommand\limplies\rightarrow
\newcommand\limpliedby\leftarrow
\newcommand\liff\leftrightarrow
\newcommand\biglor\bigvee
\newcommand\bigland\bigwedge
\newcommand\<\langle
\renewcommand\>\rangle
\newcommand\llb\llbracket
\newcommand\rrb\rrbracket
\newcommand\bb\mathbb
\renewcommand\cal\mathcal
\renewcommand\sf\mathsf
\renewcommand\frak\mathfrak
\newcommand\W{\rm W}
\newcommand\ctx{~\sf{ctx}}
\newcommand\type{~\sf{type}}
\newcommand\Id{\sf{Id}}
\newcommand\refl{\sf{refl}}
\newcommand\pair{\sf{pair}}
\newcommand\ind{\sf{ind}}
\newcommand\weak\triangledown
\newcommand\liltriangle\weak
\tikzset{|/.tip = {Bar[width = .8ex, round]}}
\renewcommand\vdots{\vbox{\baselineskip4\p@ \lineskiplimit\z@ \hbox{.}\hbox{.}\hbox{.}}}
\newlength\makesize@widthfrom
\newlength\makesize@widthto
\newlength\makesize@widthhalfdiff
\newcommand\makesize[3][c]{%
    \ifmmode%
        \text{\makesize@do{#1}{\(\m@th{#2}\)}{\(\m@th{#3}\)}}%
    \else%
        \makesize@do{#1}{#2}{#3}%
    \fi%
}
\newcommand\makesize@do[3]{%
    \settowidth\makesize@widthfrom{#2}%
    \settowidth\makesize@widthto{#3}%
    \setlength\makesize@widthhalfdiff{0.5\makesize@widthto - 0.5\makesize@widthfrom}%
    \if#1s%
        \makebox[\makesize@widthto][s]{#2}%
    \else%
        \vphantom{#3}%
        \if#1l\smash{#2}\fi%
        \hspace*{\makesize@widthhalfdiff}%
        \if#1c\smash{#2}\fi%
        \hspace*{\makesize@widthhalfdiff}%
        \if#1r\smash{#2}\fi%
    \fi%
}
\newcommand\prfstack{\prf@init\prf@Nstack}
\newcommand\prf@Nstack{\@ifnextchar[{\prf@stackoptions}{\prf@stackrule}}
\def\prf@stackoptions[#1]{\prf@opts#1,]\prf@Nstack}
\def\prf@stackrule{%
    \edef\prf@rulechoice{\ifprf@rule\else{}\fi}%
    \expandafter\prf@stacklabel\prf@rulechoice%
}
\def\prf@stacklabel#1{%
    \def\prf@hackpar{#1}%
    \edef\prf@labelchoice{{\noexpand\prf@hackpar}\ifprf@label\else{}\fi}%
    \expandafter\prf@stackinit\prf@labelchoice%
}
\newcommand\prf@stackinit[3]{%
    \ifprf@label\raisebox{0.46\baselineskip}{\text{#2}}\hspace*\prflabelskip\fi%
    \begin{array}[b]{@{}l@{}l@{}}%
    \hspace*\prflineextra%
    \@ifnextchar\bgroup%
        {\prf@stackrec{#1}{#2}{#3}}%
        {\prf@stackrec{#1}{#2}{}{#3}}%
}
\newcommand\prf@stackrec[4]{\@ifnextchar\bgroup%
    {\prf@stackrec{#1}{#2}{%
        #3%
        \ifprfSTRUToption\mathstrut\fi%
        \hspace*\prflineextra\\%
        \hspace*\prflineextra%
        #4%
    }}{%
        #3%
        \ifprfSTRUToption\mathstrut\fi%
        \hspace*\prflineextra\\%
        \noalign{\hrule height\prflinethickness\vspace*\prflinepadafter}%
        \hspace*\prflineextra%
        #4%
        \hspace*\prflineextra%
        \end{array}%
        \ifprf@rule\hspace*\prfrulenameskip\raisebox{0.46\baselineskip}{\text{#1}}\fi%
    }%
}
\newcommand\graybf[1]{\textcolor{lipicsGray}{\sffamily\bfseries\mathversion{bold}#1}}
\begin{document}
    \maketitle
    
    \begin{abstract}
        The semantics of extensional type theory has an elegant categorical description: models of extensional \(=\)-types, \(\bb1\)-types, and \(\Sigma\)-types are biequivalent to finitely complete categories, while adding \(\Pi\)-types yields locally Cartesian closed categories.
        We establish parallel results for axiomatic type theory, which includes systems like cubical type theory, where the computation rule of the \(=\)-types only holds as a propositional axiom instead of a definitional reduction.
        In particular, we prove that models of axiomatic \(=\)-types, and standard \(\bb1\)- and \(\Sigma\)-types are biequivalent to certain path categories, while adding axiomatic \(\Pi\)-types yields dependent homotopy exponents.

        This biequivalence simplifies axiomatic =-types, which are more intricate than extensional ones since they permit higher dimensional structure.
        Specifically, path categories use a primitive notion of equivalence instead of a direct reproduction of the syntactic elimination rules and computation axioms.
        We apply our correspondence to prove a coherence theorem: we show that these weak homotopical models can be turned into equivalent strict models of axiomatic type theory.
        In addition, we introduce a more modular notion, that of a display map path category, which only models axiomatic \(=\)-types by default, while leaving room to add other axiomatic type formers such as \(\bb1\)-, \(\Sigma\)-, and \(\Pi\)-types.
    \end{abstract}

    \section{Introduction and motivation}
    \label{sec:introduction}
    
    Dependent type theory, and most notably \emph{intensional type theory (ITT)}, has become a cornerstone of logic, with applications in proof assistants, programming languages, and the foundations of mathematics.
    Models of type theory form categories, and categorical formulations \cite{phdcartmell,moggi_1991,MR1201808} provide a powerful tool to understand the behaviour of these systems.
    However, giving a satisfactory categorical formulation of dependent type theory is challenging.
    First of all, we want our requirements to be simple categorical properties instead of direct reproductions of the syntax; and secondly, categorical structure usually only satisfies stability conditions up to isomorphism instead of the strict equality imposed in the syntax.
    This means that it is significantly easier to produce \emph{weak models}, which only satisfy stability up to equivalence.
    To deal with the second issue, strictification techniques \cite{hofmannlccc,MR2461866,MR3372323,MR4481908,MR4550392,kaposi2025type} have been developed to turn weakly stable structure into strictly stable structure.
    This paper focuses on the first issue: providing a categorical and homotopy theoretic presentation of the weak models of dependent type theory.
    We combine our results with existing techniques for the second issue to turn our weak models into \textit{(strict) models}.
    
    One difficulty in studying ITT is the fact that it has two notions of equality: \emph{propositional equality (=)}, which is an internal equality type, and \emph{definitional equality (\(\equiv\))}, which is an external judgment.
    Distinguishing these notions provides ITT with its good computational properties: definitional equality is a decidable fragment of propositional equality and allows type checking to be algorithmically decidable.
    This fragment can be changed, such as by allowing the addition of rewrite rules \cite{Cockx2020rewrite}.
    The two extremes are: \emph{extensional type theory (ETT)} \cite{martin1982constructive}, where every equality is definitional, and \emph{axiomatic type theory (ATT)} \cite{2021arXiv210200905V,spadetto25}, where no equality is definitional.
    
    For ETT the external and internal notions of equality match.
    This prevents it from being compatible with homotopy type theory: it proves uniqueness of identity proofs (UIP) which contradicts univalence \cite{Hofmann1997,HoTTBook}.
    In addition, definitional equality is undecidable in ETT, which means that type checking is undecidable as well.
    By contrast, the semantics of ETT is easier to explore and allows for a simple categorical description of its type formers.
    Models can be formulated using locally Cartesian closed categories as seen by Seely \cite{MR0727078}.
    However, Seely interprets substitutions using pullbacks, which are only specified up to isomorphism, leading to coherence problems.
    The precise statement is a biequivalence of 2-categories as shown by Clairambault and Dybjer \cite{MR3272793} building on work of Curien, Hofmann and Garner \cite{curien1993substitution,hofmannlccc,CURIEN201499}, which has been extended by Maietti \cite{MAIETTI2009319} and Van der Weide \cite{nielsvdweide}: \[\hfill\begin{array}{l|r}
        \text{Syntax} & \text{Semantics} \\
        \hline
        \text{extensional \(=\), \(\bb1\), \(\Sigma\)} & \text{finitely complete categories} \\
        \text{extensional \(=\), \(\bb1\), \(\Sigma\), \(\Pi\)} & \text{locally Cartesian closed categories}
    \end{array}\hfill\]
    In this paper we generalise this to ATT.
    The main reason to study axiomatic type theory is that --- while it only has one notion of equality like ETT, and is therefore easier to study -- it is minimal and therefore compatible with ITT and its extensions, and provides a larger class of models.
    In addition, there are conservativity results for ATT which means that we can use it to study ITT and ETT: for example, ETT is conservative over ATT extended with function extensionality and UIP \cite{Winterhalter2020, boulierwinterhalter, conservativity}. Moreover, from a mathematical perspective, axiomatic type formers enjoy the advantage of being \textit{homotopy invariant}, as explained in \cite{MR3050430,MR3614859}: any type that is homotopy equivalent to the one given by the formation rule of a type former satisfies the same rules.
    
    We generalise the known correspondence by considering the notion of a path category introduced by Van den Berg and Moerdijk \cite{MR3795638,MR3828037}.
    Path categories have equivalences as a primitive notion, which allows the omission of explicit interpretations of the elimination and computation rules.
    This is a considerable simplification, since it is exactly these rules that become unwieldy in ATT, especially in a setting without \(\Pi\)-types.
    We show a correspondence:
    \[\hfill\begin{array}{l|r}
        \text{Syntax} & \text{Semantics} \\
        \hline
        \text{extensional \(\bb1\), \(\Sigma\) and axiomatic \(=\)} & \text{path categories} \\
        \text{extensional \(\bb1\), \(\Sigma\) and axiomatic \(=\), \(\Pi\) } & \text{ path cats with dependent homotopy exponents}
    \end{array}\hfill\]
    This extends the existing results since finitely complete categories can be seen as degenerate path categories where dependent homotopy exponentials simplify to dependent exponentials.
    We do this by proving an isomorphism of 2-categories between path categories and an existing notion of weak semantics: display map categories.
    Then we apply the results of Lumsdaine and Warren \cite{MR3372323} and Bocquet \cite{MR4481908} to strictify those path categories that satisfy the additional logical framework (\ref{def:LF}) condition, giving a biequivalence with models satisfying this condition.
    We also introduce a new and more fine-grained notion --- that of a \emph{display map path category}, which distinguishes types and telescopes --- and prove similar correspondences: 
    \[\hfill\begin{array}{l|r}
        \text{Syntax} & \text{Semantics} \\
        \hline
        \text{axiomatic \(=\), \((\bb1)\), \((\Sigma)\), \((\Pi)\)} & \text{suitable display map path categories}
    \end{array}\hfill\]
    This result is more modular without introducing significant additional complexity: display map path categories only have axiomatic =-types by default and can be extended with any combination of \(\bb1\), \(\Sigma\), and \(\Pi\) in a relatively simple way.

    \subparagraph*{Our contribution.} Our main contribution is twofold: (a) providing a categorical formulation of the weak models of axiomatic \(=\)-types, and exploiting it to obtain a coherence result for the class of path categories, and (b) a characterisation of the strict models of axiomatic type theory in terms of path categories.
    In detail, we show that the 2-category of path categories is isomorphic to a 2-category of suitable display map categories.
    We consider display map categories to phrase the semantics of type theory because they form a good middle ground: they allow for a formulation of type formers that closely follows the syntax, while being expressive enough to consider both weak and strict models.
    Additionally, they are relatively simple and close to the homotopical notions.
    
    The idea of formulating the semantics of axiomatic \(=\)-types using path categories originates from the work of Van den Berg and Moerdijk \cite{MR3795638,MR3828037}.
    Our contribution lies in clarifying precisely in what sense path categories serve as models for these axiomatic \(=\)-types: every path category is \textit{rooted} --- that is, every context is essentially build by extending the empty context with finitely many types --- and models axiomatic \(=\)-types, and extensional \(\bb1\)- and \(\Sigma\)-types.
    We show that, when they are \textit{structured} (come equipped with choices of substitutions), and satisfy the \ref{def:LF} condition, they can be strictified into actual models.
    
    Additionally, we introduce a more fine-grained notion, that of a display map path category, which distinguishes between types and telescopes.
    Because of this distinction, they do not model extensional \(\bb1\)- and \(\Sigma\)-types, but still model axiomatic \(=\)-types.
    This makes them a suitable semantics for a minimal ATT --- only axiomatic \(=\)-types --- which can be extended with axiomatic \(\bb1\)-types, \(\Sigma\)-types, and \(\Pi\)-types.

    \subparagraph*{Related work.}
    This paper is positioned at the intersection of two goals: (a) finding a well-structured categorical description of models for ATT, in such a way that there is a modular correspondence \cite{MR2189827, MR2525957} between certain categories, where syntax is not explicitly articulated, and specific type theories, and (b) proving coherence theorems.
    From this perspective, our work is in line with that of Maietti \cite{MR2189827} and that of Clairambault and Dybjer \cite{MR3272793}. Maietti’s work is based on characterising models of certain theories via universal properties. This is largely due to the fact that her work focuses on extensional theories.
    To extend this to axiomatic theories, we need to use a weaker notion of universal properties in the form of \textit{homotopy (co)limits}. 
    Such an approach seems necessary for inductive types such as the natural numbers type and $\W$-types.
    For $=$-types, $\bb1$-types, and $\Sigma$-types, we show that it suffices to ask the constructor to be an equivalence: a homotopy universal property already follows from this requirement.
    Another approach consists in describing axiomatic type formers by means of higher dimensional universal properties in higher dimensional structures, as done in \cite{spadetto25}.
    We also reference Van der Weide's work \cite{nielsvdweide}, which generalizes both Maietti's and Clairambault and Dybjer's results to univalent categories.
    
    The correspondence that we establish is similar to the one by Clairambault and Dybjer \cite{MR3272793}, since finitely complete categories are precisely the path categories in which the only equivalences are the isomorphisms and every morphism is a fibration.
    However, a formulation of our result as a generalization of the former is hard to find.
    On the one hand, finitely complete categories induce \textit{pseudo-stable} type formers in the corresponding display map category, which is enough to obtain a split display map category with stable type formers, hence a genuine model, by \textit{right-adjoint} splitting.
    On the other hand, a general path category only provides the corresponding display map category with \textit{weakly stable} type formers, which forces us to base our splitting strategy on the \textit{left-adjoint} splitting functor, as described by Lumsdaine and Warren \cite{MR3372323} and Bocquet \cite{MR4481908}.
    However, the latter requires the input display map category to be structured, which in general does not follow from our hypotheses.
    Hence, we cannot hope to link general path categories to models of ATT, but only to weak models with weakly stable structure, whereas Clairambault and Dybjer's biequivalence automatically involves pseudo-stable structure --- and hence falls within the domain of the right-adjoint splitting.
    Our solution consists in restricting our biequivalence between path categories and weak models to the \textit{structured}, or \textit{cloven}, case in both sides, so that the latter fall within the domain of the left-adjoint splitting: in this way we deduce a biequivalence between cloven path categories and strict models of our type theory.

    \subparagraph*{Structure of the paper.} In \autoref{sec:syntax} we describe \emph{axiomatic \(=\)-types} syntactically, and explain why the based version is preferred in a setting without \(\Pi\)-types.
    In \autoref{sec:dispcat}, we review the notion of a display map category as well as the semantic notion of axiomatic \(=\)-types, and explain \textit{strict} and \textit{weak} stability in both the general and the structured case.
    Analogously, \autoref{sec:pathcat} recalls the notion of a path category together with its basic properties.
    
    In \autoref{sec:iso} we describe how path categories extend the notion of a \textit{clan} with based axiomatic \(=\)-types.
    We use this fact to show that path categories are rooted display map categories with weakly stable based axiomatic \(=\)-types and extensional \(\bb1\)- and \(\Sigma\)-types. Secondly, we prove that these conditions on a display map category are themselves enough to recover the structure of a path category, and that this correspondence extends to an isomorphism between the corresponding 2-categories.
    In \autoref{sec:disppathcat}, we introduce the notion of a display map path category adding a distinction between display maps and fibrations, and prove a similar 2-isomorphism.
    Consequence of our correspondence are explored in \autoref{sec:strictify}, where we strictify to obtain genuine models.

    \section{Syntax}\label{sec:syntax}

    We will assume some familiarity with the syntax of Martin-L\"of type theory; for an overview, see \cite{martin1984intuitionistic,HoTTBook}.
    Our main focus will be a minimal type theory with only propositional equality (the \(=\)-types) and no definitional equality (the \(\equiv\)-judgement).
    For the structural rules, the unit type (\(\bb1\)), dependent product types (\(\Sigma\)) and dependent function types (\(\Pi\)), see \autoref{app:syntaxsemantics}.
    Type formers in intensional Martin-L\"of type theory are given by formation, introduction, elimination, and computation (\(\beta\)) rules.
    For the \(=\)-types, these are:  \[
        \prfstack[r]{\(=\)F,}
            {\Gamma\vdash A\type \qquad \Gamma\vdash a:A \qquad \Gamma\vdash a':A}
            {\Gamma\vdash a=_Aa'\type}\qquad
        \prfstack[r]{\(=\)I,}
            {\Gamma\vdash a:A}
            {\Gamma\vdash\refl(a):a=_Aa} \] \[
        \prfstack[r]{\(=\)E,}
            {\Gamma,x:A,x':A,\chi:x=_Ax'\vdash C\type}
            {\Gamma,x:A\vdash d:C[x/x',\refl(x)/\chi]}
            {\Gamma\vdash\alpha:a=_Aa'}
            {\Gamma\vdash\sf{ind}^=_d(\alpha):C[a/x,a'/x',\alpha/\chi]} \qquad
        \prfstack[r]{\(=\)\(\beta\).}
            {\Gamma,x:A,x':A,\chi:x=_Ax'\vdash C\type}
            {\Gamma,x:A\vdash d:C[x/x',\refl(x)/\chi]}
            {\Gamma\vdash a:A}
            {\Gamma\vdash\sf{ind}^=_d(\refl(a))\equiv d[a/x]}
    \]
    In extensional type theory, we also add a uniqueness (\(\eta\)) rule: \begin{align*}
        &\prfstack[r]{\(=\)\(\eta\).}
            {\Gamma,x:A,x':A,\chi:x=_Ax'\vdash c:C}
            {\Gamma\vdash\alpha:a=_Aa'}
            {\Gamma\vdash\sf{ind}^=_{c[x/x',\refl(x)/\chi]}(\alpha)\equiv c[a/x,a'/x',\alpha/\chi]}
    \end{align*}
    The \(\eta\)-rule implies that propositional equality and definitional equality coincide, making type checking undecidable.
    We are mainly interested in the other direction --- a minimal type theory without definitional equality --- so we modify intensional \(=\)-types in two ways: weaken the \(\beta\)-rule (definitional equality) to a \(\beta\)-axiom (propositional equality), and fix the variable \(x\) in place as a constant term \(a\) in the eliminator: \begin{align*}
        &\prfstack[r]{\({=}\rm E_{\rm{based}}\),}
            {\Gamma,x':A,\chi:a=_Ax'\vdash C\type}
            {\Gamma\vdash d:C[a/x',\refl(a)/\chi]}
            {\Gamma\vdash\alpha:a=_Aa'}
            {\Gamma\vdash\sf{ind}^=_d(\alpha):C[a'/x',\alpha/\chi]}\, &
        &\prfstack[r]{\({=}\beta_{\rm{based}}^{\rm{ax}}\).}
            {\Gamma,x':A,\chi:a=_Ax'\vdash C\type}
            {\Gamma\vdash d:C[a/x',\refl(a)/\chi]}
            {\Gamma\vdash\beta^=_d:\sf{ind}^=_d(\refl(a))=d}
    \end{align*}
    These two modifications are orthogonal.
    The main focus of this paper is on the first one: weakening the \(\beta\)-rule to a \(\beta\)-axiom, that is, only requiring the \(=\)-type to be inhabited instead of requiring a judgemental equality.
    This weaker notion of \(=\)-type is called an \textit{axiomatic \(=\)-type}, and is the version that appears in cubical type theory \cite{MR3781068} as well as in axiomatic type theory (type theory without reductions) \cite{2021arXiv210200905V}.
    The other modification, fixing one of the two endpoints, is a technical variation known as \emph{based} path induction or the Paulin-Mohring eliminator \cite{paulin1993inductive}.
    With \(\Pi\)-types, based and unbased path induction are equivalent.
    However, without \(\Pi\)-types, the based version is stronger and better behaved.
    For example, the transport \(\alpha_*b:B[a'/x]\) of a term \(b:B[a/x]\) along an equality \(\alpha:a=_Aa'\) can be defined as \(\ind_b^=(\alpha)\) using based path induction, while it is not definable using unbased path induction.
    
    Alternatively, to give the unbased version the same strength as the based version, we can extend it with a telescope \(\Omega\).
    This is called the \emph{parametrized} or Frobenius version of path induction.
    For example, axiomatic parametrized unbased path induction is given by: \begin{align*}
        &\prfstack[r]{\raisebox\baselineskip{\(=\)E\(_{\rm{param}}\), \({=}\beta_{\rm{param}}^{\rm{ax}}\).}}
            {\Gamma,x:A,x':A,\chi:x=_Ax',\Omega\vdash C\type}
            {\Gamma,x:A,\Omega[x/x',\refl(x)/\chi]\vdash d:C[x/x',\refl(x)/\chi]}
            {\Gamma\vdash\alpha:a=_Aa'}
            {\Gamma,\Omega[a/x,a/x',\alpha/\chi]\vdash\sf{ind}^=_d(\alpha):C[a/x,a'/x',\alpha/\chi] \\ \hspace*\prflineextra
            \Gamma,\Omega[a/x,a/x',\alpha/\chi]\vdash\beta^=_d:\sf{ind}^=_d(\refl(a))=d}
    \end{align*}
    We have equivalences between axiomatic/intensional/extensional versions of path induction: \[\hfill
        \text{unparametrized based }\liff\text{ parametrized based }\liff\text{ parametrized unbased},
    \hfill\] while the original unparametrized unbased version is weaker without \(\Pi\)-types.
    We prove a semantic version of this in \autoref{prop:identityvariations}.

    In general we use the following terminology: a type former is \emph{axiomatic} if we only have a \(\beta\)-axiom, \emph{intensional} if it has a \(\beta\)-rule, and \emph{extensional} if it also has an \(\eta\)-rule.
    Note that the \(\beta\)-axiom generally implies the existence of an \(\eta\)-axiom \cite{MR3050430,MR3614859}.
    The type theory is \emph{axiomatic}/\emph{intensional}/\emph{extensional} if every type former is.
    Note that having extensional =-types implies that every other type former is extensional.

    \section{Display map categories}\label{sec:dispcat}

    There is a big zoo of semantic frameworks for the structural part of dependent type theory; see \cite{Ahrens_Lumsdaine_North_2025} for an overview of the equivalences.
    These range from frameworks that closely follow the syntax (such as categories with families, which have primitive notions for contexts, types, and terms) to frameworks that simplify the structure (such as display map categories, where a type \(A\) in context \(\Gamma\) is identified with the display map or context projection \(\Gamma.A\to\Gamma\), and a term \(a\) of \(A\) is identified with the induced context map \(\Gamma\to\Gamma.A\)).
    We will be working in the later setting for two reasons: it allows us to talk about both strict and weak models, and can easily be extended using ideas of homotopy theory as we will see in \autoref{sec:pathcat}.

    \begin{definition}[Display map category \cite{hyland1989theory,taylor1999practical}]
        A \emph{display map category} is a category \(\cal C\) with a subclass of maps \(\cal C^{\sf{disp}}\subseteq\cal C^\to\) called the \emph{display maps (\(\disp\))} that is replete (closed under isomorphism in the arrow category \(\cal C^\to\)), and closed under pullback along any map.
    \end{definition}
    The objects of a display map category are called \emph{contexts}.
    A \emph{type} \(A\) in context \(\Gamma\) consists of a context suggestively written \(\Gamma.A\) together with a display map \(p_A:\Gamma.A\disp\Gamma\).
    A \emph{term} \(a\) of \(A\) is a section of \(p_A:\Gamma.A\disp\Gamma\), that is, a map \(a:\Gamma\to\Gamma.A\) such that \(p_Aa=1_\Gamma\).
    More generally, a \emph{term} \(a\) of \(A\) for a context map \(\sigma:\Delta\to\Gamma\) is a map \(a:\Delta\to\Gamma.A\) such that \(pa=\sigma\).
    Such a term of \(A\) for \(\sigma\) is equivalent to having a term of the pullback \(A[\sigma]\) of \(A\) along \(\sigma\): \[\hfill\begin{tikzcd}[row sep = 1em]
    	{\Delta.A[\sigma]} & {\Gamma.A} \\
    	\Delta & \Gamma.
    	\arrow[disp, "p_{A[\sigma]}"', from=1-1, to=2-1]
    	\arrow[disp, "p_A", from=1-2, to=2-2]
    	\arrow["\sigma"', from=2-1, to=2-2]
    	\arrow["{\sigma^{\liltriangle}}", from=1-1, to=1-2]
        \arrow["\lrcorner"{anchor=center, pos=0.125}, draw=none, from=1-1, to=2-2]
    \end{tikzcd}\hfill\]
    We call \(A[\sigma]\) the \emph{re-indexing} or \emph{substitution} of \(A\) along \(\sigma\), and \(\sigma^\weak\) the \emph{weakening} of \(\sigma\) for \(A\).
    We extend both of these notions further.
    If $a$ is a term of type $A$, then, for any choice of \(A[\sigma]\), we write $a[\sigma]$ for the unique term of type $A[\sigma]$ such that $(\sigma^\liltriangle)(a[\sigma])=a \sigma$: \[\hfill\begin{tikzcd}[row sep = 1em]
    	{\Delta.A[\sigma]} & {\Gamma.A} \\
    	\Delta \ar[u, bend left = 45, "{a[\sigma]}"] & \Gamma. \ar[u, bend right = 45, "a"', pos = 0.4]
    	\arrow[disp, from=1-1, to=2-1]
    	\arrow[disp, from=1-2, to=2-2]
    	\arrow["\sigma"', from=2-1, to=2-2]
    	\arrow["{\sigma^{\liltriangle}}", from=1-1, to=1-2]
        \arrow["\lrcorner"{anchor=center, pos=0.125}, draw=none, from=1-1, to=2-2]
    \end{tikzcd}\hfill\]
    For a type \(B\) in context \(\Gamma\) we write $B^\liltriangle\coloneqq B[p_A]$ for the induced type in an extended context \(\Gamma.A\); and for a term \(b\) of \(B\) we write \(b^\weak\coloneqq b[p_A]\) for the induced term of \(B^\weak\).
    Finally, for any choice of \(B^\liltriangle\), we use the categorical notation \((a,b):\Gamma\to\Gamma.A.B^\liltriangle\) for the term of the pullback induced by terms \(a:\Gamma\to\Gamma.A\) and \(b:\Gamma\to\Gamma.B\).
    We extend this to the dependent case by writing \((a,b):\Gamma\to\Gamma.A.B\) for the term \(a^\liltriangle b\) induced by terms \(a:\Gamma\to\Gamma.A\) and \(b:\Gamma\to\Gamma.B[a]\).
    For a type \(A\) we have always the \emph{variable} term or \emph{diagonal} \(\delta_a\coloneqq(1_{\Gamma.A},1_{\Gamma.A}):\Gamma.A\to\Gamma.A.A^\weak\).

    \begin{definition}[Root]
        We call a sequence of types \(A_0.\dots.A_{n-1}\), such that \(A_i\) is a type in context \(\Gamma.A_0.\dots.A_{i-1}\), a \emph{telescope} of \(\Gamma\), and a composition of display maps \(\Gamma.A_0.\dots.A_{n-1}\disp\cdots\disp\Gamma\) a \emph{fibration (\(\fib\))}.
        A display map category is called \emph{rooted} if it has a terminal object $1$ (the \emph{empty context} or \emph{root}), and any map \(\Gamma\to1\) is a fibration\footnote{
            Note that having a root is the correct way to state democracy in the absence of extensional \(\bb1\)- and \(\Sigma\)-types.
            Democracy means that every context has a representation, and is usually stated as every context being isomorphic to \(1.A\) for a type \(A\) in the empty context \cite[Definition 2.6]{MR3272793}.
        }.
    \end{definition}
    Type formers follow the syntactic specification.
    For example, an =-type is given by:

    \begin{definition}[\(=\)-type]
        We say that a display map category has \emph{axiomatic (based) $=$-types} if, for every type $A$ in context $\Gamma$, there exists the data of a \(=\)-type for \(A\):
        \begin{description}
            \item[\emph{F.}] a type \(\Id_A\) in context $\Gamma.A.A^\liltriangle$ \emph{(for some choice of re-indexing)},
            \item[\emph{I.}] a term \(\refl_A\) of type \(\Id_A[\delta_A]\) in context \(\Gamma.A\), where \(\delta_A\coloneqq(1_{\Gamma.A},1_{\Gamma.A}):\Gamma.A\to\Gamma.A.A^\liltriangle\): \[\hfill\begin{tikzcd}[row sep = 1.5em, column sep = 5em]
            	\Gamma.A.\Id_A[\delta_A] \ar[d, disp]\ar[r,"\delta_A^\liltriangle"]\ar[dr, "\lrcorner"{anchor=center, pos=0.125}, draw=none] & \Gamma.A.A^\liltriangle.\Id_A \ar[d, disp] \\
                \Gamma.A \ar[r,"\delta_A"']\ar[u, bend left = 45, "\refl_A", dashed] & \Gamma.A.A^\liltriangle\mathrlap,
        	\end{tikzcd}\hfill\]
            \item[\emph{E.}]
            for every term \(a\) of type \(A\), every type \(C\) in context \(\Gamma.A.\Id_A[a^\liltriangle]\), and every term \(d\) of type \(C[a,\refl[a]]\)\;\footnote{
                Note that \(\refl[a]\) is a morphism of the form \(\Gamma\to\Gamma.\Id[\delta_A][a]\) and because \(\delta_Aa=(1,1)a=(a,a)=a^\liltriangle a\) we can also write the codomain as \(\Gamma.\Id[a^\weak][a]\) so that \((a,\refl[a]):\Gamma\to\Gamma.A.\Id_A[a^\weak]\).
                However, a structured display map category (see below) does not have to choose the same object for \(\Gamma.\Id[\delta_A][a]\) and \(\Gamma.\Id[a^\weak][a]\).
                So, there we should understand the statement as leaving a canonical isomorphism between the chosen objects implicit.
                A strict display map category is guaranteed to choose the same re-indexing.
            }, a term \(\ind^=_d\) of \(C\): \[\hfill\begin{tikzcd}[column sep = 5em, row sep = 1.5em]
                \Gamma.C[a,\refl[a]]\ar[d, disp]\ar[r,"{(a,\refl[a])^\liltriangle}"]\ar[dr, "\lrcorner"{anchor=center, pos=0.125}, draw=none] & \Gamma.A.\Id_A[a^\liltriangle].C \ar[d, disp] \\
                \Gamma \ar[r,"{(a,\refl[a])}"']\ar[u, bend left = 45, "d"] & \Gamma.A.\Id_A[a^\liltriangle], \ar[u, bend right = 45, "{\ind_d^=}"', dashed]
        	\end{tikzcd}\hfill\]
            \item[\emph{\(\beta\).}] for every \(a,C,d\) as before, \(\ind^=_d[a,\refl[a]]\) is propositionally equal to \(d\), that is, we have a term \(\beta^=_d\) of type \(\Id_{C[a,\refl[a]]}[\ind^=_d[a,\refl[a]],d]\).
        \end{description}
        We are interested in the axiomatic notion, but for clarity we also explain stronger notions.
        We call an \(=\)-type for a type $A$ \emph{intensional} if it satisfies the \(\beta\)-rule: for every \(C,d\) as before we have that \(\beta^=_d=\refl[d]\), and therefore $\ind^=_d[a,\refl[a]]=d$.
        We call the \(=\)-type \emph{extensional} if it also satisfies the \(\eta\)-rule: for every \(C\) and term \(c\) of \(C\) we have that $\ind^=_{c[a,\refl[a]]}=c$.
    \end{definition}
    We say that a display map category with $=$-types has \emph{weakly stable} \(=\)-types if for every type \(A\) in context \(\Gamma\) there exist an \(=\)-type \(\Id_A\) and for every \(\sigma:\Delta\to\Gamma\) we can equip \(\Id_A[\sigma^{\liltriangle\liltriangle}]\) and \[\hfill\begin{tikzcd}[cramped,column sep=5em]
        {\Delta.A[\sigma]} &
        {\Delta.A[\sigma].\Id_A[\delta_A][\sigma^\liltriangle]} \iso {\Delta.A[\sigma].\Id_A[\sigma^{\liltriangle\liltriangle}][\delta_{A[\sigma]}],}
        \arrow["{\refl_A[\sigma^\liltriangle]}", from=1-1, to=1-2]
    \end{tikzcd}\hfill\] with the additional data required to form an \(=\)-type.
    To explain the intuition for this notion, we first need to explain the idea of an equivalence.
    A \emph{(homotopy) equivalence} between types \(A\) and \(B\) fibered over a context \(\Gamma\) is a term \(b:\Gamma.A\to\Gamma.B\) of \(B\) for \(p_A\) such that there exists a term \(a:\Gamma.B\to\Gamma.A\) of \(A\) for \(p_B\) so that the two compositions are pointwise propositionally equal to the identity, that is, we have terms of \(\Id_A[ab,1_{\Gamma.A}]\) and \(\Id_B[ba,1_{\Gamma.B}]\).
    One can observe that between any two \(=\)-types \(\Id_A,\Id_A'\) for \(A\) there exists an equivalence that respects the data of an =-type, and that any type that is equivalent to an \(=\)-type for \(A\) can be given the additional data to form an \(=\)-type.
    Hence, weak stability is equivalent to the statement that =-types are preserved by re-indexing up to equivalence.
    This certainly holds in the syntax because the substitution of an =-type is still an =-type.
    
    As explained in \cite{Ahrens_Lumsdaine_North_2025}, there exists a 2-category $\sf{DispCat}$ of display map categories, functors preserving display maps and pullbacks of display maps, and natural transformations.
    We can specialize this to 0-cells that are rooted and 1-cells that preserve the terminal object.
    Another specialization is to 0-cells with additional weakly stable type formers and 1-cells that weakly preserve them; see \autoref{app:iso}.
    In \autoref{sec:iso}, we show that the 2-category of path categories is isomorphic to the 2-category \(\sf{DispCat}^{\sf{root}}_{=_{\sf{axi}},\bb1_{\sf{ext}},\Sigma_{\sf{ext}}}\) of rooted display map categories with weakly stable axiomatic \(=\)-types, extensional \(\bb1\)-types, and extensional \(\Sigma\)-types.

    \subsection{Strict display map categories}

    In the syntax, substitution is defined recursively and therefore satisfies definitional equalities.
    These equalities are not even expressible in a display map category because substitution is only specified up to isomorphism, and \(=\)-types are only specified up to equivalence.
    So, to model type theory strictly, we need specified choices: a \emph{structured} or \emph{cloven} display map category is equipped with a subclass of the display maps that generates the display maps by repletion (the \emph{strict} display maps), and choices for re-indexing strict display maps to strict display maps.
    We call it a \emph{strict} or \emph{split} display map category if these choices respect identity and composition: \(A[1_\Gamma]=A\) and \(A[\tau\sigma]=A[\tau][\sigma]\).
    A strict rooted display map category also has choices to write every \(\Gamma\to1\) as a composition of strict display maps and isomorphisms.
    Given specified choices for a type former, we can consider a range of conditions \cite{MR3372323}: \begin{itemize}
        \item \makesize{\emph{weakly}}{\emph{strictly}} \emph{stable}: preserved by re-indexing up to \makesize{equivalence}{isomorphism} over the context.
        \item \makesize{\emph{pseudo}}{\emph{strictly}}-\emph{stable}: preserved by re-indexing up to isomorphism over the context.
        \item \makesize{\emph{strictly}}{\emph{strictly}} \emph{stable}: preserved by re-indexing.
    \end{itemize}
    For example, specified choices for =-types consist of: for every strict type \(A\) a strict type \(\Id_A\) and a term \(\refl_A\), together with for every strict type \(C\) and term \(d\), terms \(\ind^=_d\) and \(\beta^=_d\).
    These choices are \emph{strictly stable} if for any \(\sigma\) we have the following equalities: \begin{align*}
        \Id_A[\sigma^{\liltriangle\liltriangle}]&=\Id_{A[\sigma]}, & \ind^=_d[\sigma^{\liltriangle\liltriangle}]&=\ind^=_{d[\sigma^\liltriangle]}, &
        \refl_A[\sigma^\liltriangle]&=\refl_{A[\sigma]}, & \beta^=_d[\sigma^\liltriangle]&=\beta^=_{d[\sigma^\liltriangle]}.
    \end{align*}
    A \emph{weak model} of a type theory is a display map category with weakly stable type formers, and a \emph{strict model} is a strict display map category with strictly stable type formers.

    \section{Path categories}\label{sec:pathcat}

    We start with the following notion from abstract homotopy theory:
    \begin{definition}[Clan \cite{joyal_notes_2017}]
        A \emph{clan} is a category \(\cal C\) with a subclass of maps \(\cal C^{\sf{fib}}\subseteq\cal C^\to\) called the \emph{fibrations (\(\fib\))} that is replete, closed under identity, composition, and pullback along any map.
        In addition, there exists a terminal object \(1\) and every map \(A\to 1\) is a fibration.
    \end{definition}
    We have a forgetful functor from rooted display map categories to clans (only remembering the fibrations, that is, the compositions of display maps), and conversely every clan gives a rooted display map category in a cofree way: the entire class of fibrations can be taken as the display maps.
    To emphasize the topological view, we start writing the pullback of \(\Omega\fib\Gamma\) and \(\sigma:\Delta\to\Gamma\) as \(\Delta\times_\Gamma\Omega\) instead of \(\Delta.\Omega[\sigma]\).
    We extend clans with equivalences:
    \begin{definition}[Path category \cite{MR3795638}]\label{pathcat}
        A \emph{path category} is a clan with an additional subclass \(\cal C^{\sf{eq}}\subseteq\cal C^\to\) of \emph{(weak) equivalences (\(\tto\))}.
        If a map is both a fibration and an equivalence, we call it a \emph{trivial fibration (\(\tfib\))}.
        In addition, we require:
        \begin{enumerate}
            \item
            isomorphisms are weak equivalences,
            \item
            equivalences satisfy 2-out-of-6: if for \(\smash{A\stackrel f\to B\stackrel g\to C\stackrel h\to D}\) we have that both \(gf\) and \(hg\) are equivalences, then so are \(f\), \(g\), \(h\), and \(hgf\),
            \item
            the pullback of a trivial fibration along any map is also a trivial fibration,
            \item
            every trivial fibration has a section,
            \item
            every object \(A\) has a path object: an object \(PA\) together with a fibration \((s,t):PA\fib A\times A\) and an equivalence \(r:A\tto PA\) with \((s,t)r=(1,1)\).
        \end{enumerate}
    \end{definition}
    This is a strengthening of Brown's notion of a category of fibrant objects \cite{Brown1973} in two ways: (a) we have 2-out-of-6 instead of the weaker 2-out-of-3 (if for \(\smash{A\to B\to C}\) two of the three maps are equivalences, then so is the third), which is equivalent to saturation \cite{RadulescuBanu2009}, and (b) every trivial fibration has a section, which is equivalent to every object being cofibrant. 

    A trivial example of a path category is a category with finite limits where every morphism is a fibration, and only isomorphisms are equivalences.
    More interesting examples arise by taking the fibrant-cofibrant objects of a model category where every object is cofibrant. 
    In addition, the effective topos can be seen as the homotopy category of a path category \cite{Van_Den_Berg_2020}.
    
    Four main results for path categories are: factorization, slicing, lifting, and saturation, which we will show here; for proofs, see \cite{MR3795638}.
    First, notice that finite products exist and that the projections are pullbacks of fibrations and therefore fibrations.
    In addition, we see that the source and target maps \(s,t:PA\to A\) are always trivial fibrations: equivalences by 2-out-of-3 on \(sr=1\) and \(tr=1\), and fibrations as compositions of fibrations because \(s=\pi_0(s,t)\) and \(t=\pi_1(s,t)\).
    Now we can state:
    \begin{proposition}[Factorization]\label{prop:factorization}
        We can factor any map \(f:B\to A\) as an equivalence \(w_f:B\tto Lf\) (a section of a trivial fibration) followed by a fibration \(p_f:Lf\fib A\).
    \end{proposition}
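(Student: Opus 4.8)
The plan is to use the mapping path-space (cocylinder) factorization. Fix \(f\colon B\to A\). Recall from the discussion just above the statement that the source and target maps \(s,t\colon PA\to A\) are trivial fibrations, and that the path-object axiom gives a fibration \((s,t)\colon PA\fib A\times A\). Since \(s\) is a fibration, I can form the pullback
\[\begin{tikzcd}[row sep = 1.5em, column sep = 3em]
	Lf \arrow[r, "\pi_{PA}"] \arrow[d, "\pi_B"'] & PA \arrow[d, "s"] \\
	B \arrow[r, "f"'] & A
	\arrow["\lrcorner"{anchor=center, pos=0.125}, draw=none, from=1-1, to=2-2]
\end{tikzcd}\]
and take \(Lf\) as the middle object of the factorization. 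As the pullback of the trivial fibration \(s\) along \(f\), the projection \(\pi_B\colon Lf\to B\) is itself a trivial fibration by axiom~(3).

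Next I would write down the equivalence and the fibration. Since \(s\,(rf)=(sr)f=f=f\cdot 1_B\), the maps \(1_B\) and \(rf\) pair to a map \(w_f\coloneqq(1_B,rf)\colon B\to Lf\) with \(\pi_B w_f=1_B\); thus \(w_f\) is a section of the trivial fibration \(\pi_B\), and hence an equivalence by \(2\)-out-of-\(3\) (a consequence of \(2\)-out-of-\(6\)), using that \(1_B\) and \(\pi_B\) are equivalences. I then set \(p_f\coloneqq t\circ\pi_{PA}\colon Lf\to A\), and the required triangle is immediate: \(p_f w_f=t\,\pi_{PA}\,(1_B,rf)=t(rf)=(tr)f=f\).

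The one point that needs care — and what I expect to be the main obstacle — is showing that \(p_f\) is a fibration. Decomposing \(p_f=t\circ\pi_{PA}\) factorwise does not help, since \(\pi_{PA}\) is the pullback of \(f\), not of a fibration, and so need not be a fibration. Instead I would check that the pair \((\pi_B,p_f)\colon Lf\to B\times A\) realizes \(Lf\) as the pullback of the fibration \((s,t)\colon PA\fib A\times A\) along \(f\times 1_A\colon B\times A\to A\times A\); concretely, both the square for \(Lf\) over \(A\) and the square over \(A\times A\) describe the same object of pairs \((b,\gamma)\) with \(s\gamma=f(b)\), the second component of the path being its target, so \((\pi_B,p_f)\) is a fibration as a pullback of \((s,t)\). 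Composing with the product projection \(B\times A\to A\) — a fibration because every map to \(1\) is one and fibrations are stable under pullback — then exhibits \(p_f\) as a composite of fibrations, hence a fibration. Everything else is routine bookkeeping with the path-object data and \(2\)-out-of-\(3\).
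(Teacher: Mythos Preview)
Your proof is correct and follows essentially the same approach as the paper's: both use the mapping path space \(Lf=B\mathbin{_f\times_s}PA\), define \(w_f=(1_B,rf)\) and \(p_f=t\,\pi_{PA}\), obtain that \(w_f\) is an equivalence via 2-out-of-3 from its being a section of the trivial fibration \(\pi_B\), and show \(p_f\) is a fibration by exhibiting \((\pi_B,p_f)\colon Lf\to B\times A\) as the pullback of \((s,t)\) along \(f\times 1_A\) and then composing with the projection. The paper's presentation is slightly more compressed (writing the pullback square explicitly and noting \(p_f=\pi_1\circ(\pi_0,t\pi_1)\)), but the argument is the same.
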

    Using generalized elements, we think of the \emph{mapping path space} \(Lf\coloneqq B\mathbin{_f\times_s}PA\) as the space of pairs \((b,\alpha)\) with \(b\) in \(B\) and \(\alpha\) a path in \(A\) with source \(fb\): \[\hfill\begin{tikzpicture}[scale = 0.425]
        \fill (-1,3) node[above] {\(b\)} circle[radius = 2pt];
        \fill (-1,0) circle[radius = 2pt];
        \fill (1,0) circle[radius = 2pt];
        \draw[decorate, decoration=snake] (-1,0) -- node[below] {\(\alpha\)} (1,0);
        \draw[|->] (-1,2.75) -- node[left] {\(f\)} (-1,0.25);
        \draw (0,3) ellipse (4 and 1);
        \node at (4.5,3) {\(B\)};
        \draw (0,0) ellipse (4 and 1);
        \node at (4.5,0) {\(A\)};
    \end{tikzpicture}\hfill\]
    Then \(w_f\coloneqq(1_B,rf)\) maps \(b\) in \(B\) to the pair consisting of \(b\) and the trivial path on \(fb\), and \(p_f\coloneqq t\pi_1\) maps \((b,\alpha)\) to the target of \(\alpha\).
    \begin{proof}
        The projection \(\pi_0:B\mathbin{_f\times_s}PA\to B\) is the pullback of \(s\) and therefore a trivial fibration.
        So, with 2-out-of-3 on \(\pi_0(1_B,rf)=1_B\) we see that \(w_f\coloneqq(1_B,rf)\) is an equivalence.
        In addition, we see that the following square is pullback: \[\hfill\begin{tikzcd}[column sep = large, row sep = small]
            B\mathbin{_f\times_s}PA \ar[r, "\pi_1"] \ar[d, "{(\pi_0,t\pi_1)}"'] \ar[rd, "\lrcorner"{anchor=center, pos=0.125}, draw=none] & PA \ar[d, "{(s,t)}"{near start}, fib] \\
            B\times A \ar[r, "{(f\pi_0,\pi_1)}"'] & A\times A
        \end{tikzcd}\hfill\]
        So, \((\pi_0,t\pi_1)\) is a fibration, and therefore \(p_f\coloneqq t\pi_1=\pi_1(\pi_0,t\pi_1)\) is as well.
    \end{proof}
    \begin{definition}[Slice category of fibrations]
        If \(\cal C\) is a path category then for any object \(\Gamma\) in \(\cal C\) we define \(\cal C/^{\sf{fib}}\Gamma\) as the full subcategory of \(\cal C/\Gamma\) whose objects are fibrations to \(\Gamma\).
        This is a path category when notions of fibration and equivalence are copied from \(\cal C\).
        The path object \(A\tto P_\Gamma A\fib A\times_\Gamma\times A\) of a fibration \(A\fib\Gamma\) is a factorisation of \(\delta_A:A\to A\times_\Gamma A\) in \(\cal C\).
    \end{definition}
    \begin{definition}[Homotopy]
        We write \(f\simeq g\) for \(f,g:A\to B\) if there exists an \emph{homotopy}: an \(h:A\to PB\) such that \((s,t)h=(f,g)\).
        A \emph{homotopy equivalence} is a map \(f:A\to B\) such that there exists a \(g:B\to A\) with \(gf\simeq 1\) and \(fg\simeq 1\).
        More generally, if \(B\fib\Gamma\) then we write \(f\simeq_\Gamma g\) if there exists a \emph{fibrewise homotopy}: an \(h:A\to P_\Gamma B\) such that \((s,t)h=(f,g)\).
    \end{definition}
    \begin{theorem}[Saturation]\label{the:saturation}
        The homotopy equivalences are precisely the weak equivalences.
    \end{theorem}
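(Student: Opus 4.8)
The plan is to prove the two inclusions between the \emph{homotopy equivalences} and the \emph{weak equivalences} separately. The inclusion of homotopy equivalences into weak equivalences is comparatively routine and rests on 2-out-of-6; the substantial content is the converse, for which I would pass through the mapping path space of \autoref{prop:factorization} and reduce to showing that trivial fibrations, and sections of trivial fibrations, are homotopy equivalences.

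For the easy inclusion I would first record that homotopy interacts well with being a weak equivalence: if \(u\simeq v\colon X\to Y\) and \(u\) is a weak equivalence, then so is \(v\). Indeed, a witnessing homotopy \(h\colon X\to PY\) has \(sh=u\) and \(th=v\), and since \(s,t\colon PY\to Y\) are trivial fibrations, as already noted, applying 2-out-of-3 (a consequence of 2-out-of-6) to \(sh=u\) shows \(h\) is a weak equivalence, whence \(v=th\) is too. Now if \(f\colon X\to Y\) is a homotopy equivalence with homotopy inverse \(g\), then \(gf\simeq 1_X\) and \(fg\simeq 1_Y\) are weak equivalences by the previous remark, and applying 2-out-of-6 to the chain \(X\xrightarrow{f}Y\xrightarrow{g}X\xrightarrow{f}Y\) (both \(gf\) and \(fg\) being weak equivalences) yields that \(f\), and also \(g\), are weak equivalences.

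For the converse, the key lemma I would isolate is: \emph{if \(q\colon E\fib B\) is a trivial fibration and \(u,v\colon Z\to E\) satisfy \(qu=qv\), then \(u\simeq v\)} (in fact fibrewise over \(B\)). To prove it, take a path object \(E\xrightarrow{r}P_BE\xrightarrow{(s,t)}E\times_BE\) of \(E\) in the slice path category \(\cal C/^{\sf fib}B\). Both projections \(E\times_BE\to E\) are pullbacks of the trivial fibration \(q\), hence trivial fibrations, so the diagonal \(\delta\colon E\to E\times_BE\), being a section of one of them, is a weak equivalence; since \((s,t)r=\delta\) with \(r\) a weak equivalence, 2-out-of-3 makes \((s,t)\colon P_BE\fib E\times_BE\) a trivial fibration. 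It therefore has a section \(\sigma\), and \(h\coloneqq\sigma\,(u,v)\colon Z\to P_BE\) satisfies \((s,t)h=(u,v)\), i.e.\ \(h\) is a fibrewise homotopy \(u\simeq_Bv\); the lifting result for path categories then upgrades \(\simeq_B\) to \(\simeq\).

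Granting the lemma, let \(f\colon X\to Y\) be a weak equivalence and factor it by \autoref{prop:factorization} as \(X\xrightarrow{w_f}Lf\xrightarrow{p_f}Y\), with \(w_f\) a section of the trivial fibration \(\pi_0\colon Lf\tfib X\) and \(p_f\colon Lf\fib Y\) a fibration; by 2-out-of-3, \(p_f\) is a weak equivalence, hence a trivial fibration, and so has a section \(s\). Applying the lemma to \(\pi_0\) (to \(w_f\pi_0\) and \(1_{Lf}\), which agree after \(\pi_0\)) gives \(w_f\pi_0\simeq 1_{Lf}\), so \(w_f\) is a homotopy equivalence with inverse \(\pi_0\); applying it to \(p_f\) (to \(sp_f\) and \(1_{Lf}\), which agree after \(p_f\)) gives \(sp_f\simeq 1_{Lf}\), so \(p_f\) is a homotopy equivalence with inverse \(s\). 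Since homotopy is an equivalence relation compatible with composition on both sides — the post-composition case again using the lifting result — homotopy equivalences compose, so \(f=p_f\circ w_f\) is a homotopy equivalence. The main obstacle is this converse direction, and within it the key lemma: in particular verifying that \((s,t)\colon P_BE\to E\times_BE\) is a trivial fibration, and that homotopy is a congruence; everything else is bookkeeping with 2-out-of-6 and \autoref{prop:factorization}.
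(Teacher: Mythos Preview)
The paper does not supply its own proof of this theorem: it lists saturation among the four main results for path categories and defers to \cite{MR3795638} for the argument. Your outline is essentially the standard proof found there. For the easy direction you use that $s,t$ are trivial fibrations together with 2-out-of-6; for the converse you factor $f=p_fw_f$ via \autoref{prop:factorization}, note that both $\pi_0$ and $p_f$ are trivial fibrations, and invoke the key lemma (any two maps equalised by a trivial fibration are fibrewise homotopic, because $(s,t)\colon P_BE\fib E\times_BE$ is then itself a trivial fibration) to exhibit $w_f$ and $p_f$ as homotopy equivalences.

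Two points are worth making explicit. First, you appeal to \autoref{the:lifting} both to upgrade $\simeq_B$ to $\simeq$ and to get post-composition compatibility of homotopy; this is not circular, since in the standard development lifting is proved from factorisation alone and does not rely on saturation, but your write-up should state that dependency order. Second, composing homotopy equivalences also uses transitivity of $\simeq$, which in turn comes from independence of the choice of path object (the Corollary following \autoref{the:lifting}), hence again from lifting. With these dependencies spelled out your argument is correct and matches the route in the cited reference.
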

    \begin{theorem}[Lifting]\label{the:lifting}
        Suppose that we have the commutative outer square: \[\hfill\begin{tikzcd}[column sep = large, row sep=scriptsize]
            B \ar[r, "f"] \ar[d, "w"', equiv] \ar[rd, phantom, "\simeq_\Gamma"{near start}] & A \ar[d, "p"{near start}, fib] \\
            \Delta \ar[r, "\sigma"'] \ar[ru, "l"'{inner sep=0pt}, dashed] & \Gamma.
        \end{tikzcd}\hfill\]
        Then there exists a map \(l:\Delta\to A\) such that the lower triangle commutes and the upper triangle commutes up to fibrewise homotopy: we have \(pl=\sigma\) and \(lw\simeq_\Gamma f\).
        Moreover, such a lifting \(l\) is unique up to fibrewise homotopy.
    \end{theorem}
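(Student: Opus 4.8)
The plan is to establish existence directly from saturation, and then bootstrap uniqueness from the existence statement itself; no factorisation or case split turns out to be needed.

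\emph{Existence.} By \autoref{the:saturation} the equivalence \(w\) is a homotopy equivalence, so fix a homotopy inverse \(g\colon\Delta\to B\) together with homotopies \(gw\simeq 1_B\) and \(wg\simeq 1_\Delta\). The composite \(fg\colon\Delta\to A\) is a first approximation to the desired lift. Since homotopy is a congruence (it is respected by pre- and post-composition) \cite{MR3795638}, post-composing \(wg\simeq 1_\Delta\) with \(\sigma\) gives \(p(fg)=\sigma wg\simeq\sigma\), so we may pick a homotopy \(\xi\colon\Delta\to P\Gamma\) with \((s,t)\xi=(p(fg),\sigma)\). Because \(p\) is a fibration it has the path-lifting property \cite{MR3795638}: the path \(\xi\), together with the lift \(fg\) of its source, lifts to a homotopy \(\tilde\xi\colon\Delta\to PA\) with source \(fg\) whose image under \(p\) is \(\xi\). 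Its target \(l\coloneqq t\tilde\xi\) then satisfies \(pl=t\xi=\sigma\) strictly, and \(\tilde\xi\) witnesses \(fg\simeq l\). Pre-composing this homotopy with \(w\) and combining with \(fgw\simeq f\) (post-compose \(gw\simeq 1_B\) with \(f\)) yields \(lw\simeq f\). Finally \(p(lw)=\sigma w=pf\), so the lemma that homotopic maps agreeing after a fibration are fibrewise homotopic \cite{MR3795638} upgrades this to \(lw\simeq_\Gamma f\), as required. (One could instead first pull \(p\) back along \(\sigma\) to reduce to \(\sigma=1_\Delta\), but this is unnecessary.)

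\emph{Uniqueness.} Suppose \(l_1,l_2\colon\Delta\to A\) both satisfy \(pl_i=\sigma\) and \(l_iw\simeq_\Gamma f\). Then \(l_1w\simeq_\Gamma l_2w\), witnessed by some \(k\colon B\to P_\Gamma A\) with \((s,t)k=(l_1w,l_2w)=(l_1,l_2)\circ w\); here \((l_1,l_2)\colon\Delta\to A\times_\Gamma A\) is well defined since \(pl_1=\sigma=pl_2\). Now apply the existence part just proved to the commutative square whose top edge is \(k\), whose equivalence is the same \(w\), whose fibration is \((s,t)\colon P_\Gamma A\fib A\times_\Gamma A\) (a factorisation of \(\delta_A\) over \(\Gamma\)), and whose bottom edge is \((l_1,l_2)\). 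This produces \(m\colon\Delta\to P_\Gamma A\) with \((s,t)m=(l_1,l_2)\) on the nose --- that is, a fibrewise homotopy \(l_1\simeq_\Gamma l_2\) over \(\Gamma\).

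\emph{Main obstacle.} The proof above is essentially formal once the three inputs are in place: saturation (\autoref{the:saturation}), congruence of homotopy, and path-lifting along fibrations (the latter two from \cite{MR3795638}). I expect the real work to sit in the last fact invoked in the existence argument --- that homotopic maps which become equal after a fibration are already fibrewise homotopic. This is the only place where the argument uses that \(p\) is a fibration in an essential, non-formal way (the statement is false for an arbitrary map \(p\)), and establishing it requires working with fibrewise path objects rather than just the ambient path objects, typically via a lifting against \((s,t)\colon P_\Gamma A\fib A\times_\Gamma A\) set up so as to avoid circularity with the theorem being proved.
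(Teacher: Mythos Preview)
The paper does not prove this itself; it defers to \cite{MR3795638}, where the argument runs through factorisation: factor \(w\) as an equivalence into \(Lw\) followed by a fibration \(Lw\fib\Delta\), observe by 2-out-of-3 that the latter is trivial, take a section, and use that sections of trivial fibrations are fibrewise deformation retracts. Your route --- extract a homotopy inverse of \(w\) via saturation, then straighten the lower triangle using covering homotopy along \(p\), then upgrade to a fibrewise homotopy --- is a genuinely different packaging, and your uniqueness bootstrap (apply existence to the square with \((s,t):P_\Gamma A\fib A\times_\Gamma A\)) is exactly the standard one.

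The gap is circularity. In \cite{MR3795638} and \cite{MR3828037} lifting is established \emph{before} saturation, and the hard direction of \autoref{the:saturation} (weak equivalence \(\Rightarrow\) homotopy equivalence) uses it: one must show that a trivial fibration with a chosen section is a fibrewise deformation retract, and that step is precisely an instance of the theorem you are proving. So invoking \autoref{the:saturation} here begs the question unless you supply an independent argument for that direction. Your claim to avoid factorisation is also only cosmetic: the covering-homotopy/transport property you cite from \cite{MR3795638} is itself proved there by constructing \(Lp=A\mathbin{_p\times_s}P\Gamma\), which \emph{is} the factorisation machinery. If you drop the appeal to saturation and instead factor \(w\) directly --- the trivial-fibration part then hands you a section playing the role of your \(g\), with the needed homotopies coming from the deformation-retract structure rather than from saturation --- your existence argument becomes non-circular and essentially reduces to the standard proof.
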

    \begin{corollary}
        Let $f : B \to A$ be any morphism in a path category and let $PB$ and $PA$ be path objects. Then there is a morphism $P f : P B \to P A$ commuting with the morphisms $(s_B,t_B)$ and $(s_A,t_A)$ and such that $(P f)r_B \simeq_{A\times A} r_A f$. In particular, homotopy is independent of the choice of path objects (consider $f=1_A$).
    \end{corollary}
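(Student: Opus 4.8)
The plan is to obtain $Pf$ as a lift in a single instance of \autoref{the:lifting}. Recall from \autoref{pathcat} that $r_B\colon B\tto PB$ is an equivalence, that $(s_A,t_A)\colon PA\fib A\times A$ is a fibration, and that $(s_B,t_B)r_B=(1_B,1_B)$ and $(s_A,t_A)r_A=(1_A,1_A)$. We will lift along the fibration $(s_A,t_A)$ over the base $\Gamma=A\times A$.

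Concretely, consider the square
\[\hfill\begin{tikzcd}[column sep = large, row sep=scriptsize]
    B \ar[r, "{r_Af}"] \ar[d, "{r_B}"', equiv] \ar[rd, phantom, "{\simeq_{A\times A}}"{near start}] & PA \ar[d, "{(s_A,t_A)}"{near start}, fib] \\
    PB \ar[r, "{(fs_B,\,ft_B)}"'] \ar[ru, "{Pf}"'{inner sep=0pt}, dashed] & A\times A.
\end{tikzcd}\hfill\]
Its outer boundary commutes on the nose: the clockwise composite $B\to A\times A$ is $(s_A,t_A)(r_Af)=(1_A,1_A)f=(f,f)$, and the counter-clockwise composite is $(fs_B,ft_B)r_B=(fs_Br_B,ft_Br_B)=(f,f)$, using $s_Br_B=t_Br_B=1_B$. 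Since $r_B$ is an equivalence and $(s_A,t_A)$ is a fibration, \autoref{the:lifting} supplies a map $Pf\colon PB\to PA$ with $(s_A,t_A)(Pf)=(fs_B,ft_B)$ — equivalently $s_A(Pf)=fs_B$ and $t_A(Pf)=ft_B$, which is the asserted compatibility with $(s_B,t_B)$ and $(s_A,t_A)$ — and with $(Pf)r_B\simeq_{A\times A}r_Af$. There is no real obstacle here: the statement is a direct instance of the Lifting theorem, the only verification needed being the commutativity of the outer square just checked.

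For the final claim, apply the above with $f=1_A$ to two path objects $PA$ and $PA'$ of the same object $A$, obtaining $\phi\coloneqq P(1_A)\colon PA\to PA'$ with $(s',t')\phi=(s,t)$. Then any homotopy $h\colon X\to PA$ with $(s,t)h=(g_0,g_1)$ yields $\phi h\colon X\to PA'$ with $(s',t')\phi h=(s,t)h=(g_0,g_1)$, so $g_0\simeq g_1$ is witnessed by $PA'$ as well; by symmetry (swap the roles of $PA$ and $PA'$) the converse holds too. Hence the relation $g_0\simeq g_1$ does not depend on which path object is used; running the same argument inside the path category $\cal C/^{\sf{fib}}\Gamma$ shows the same for $\simeq_\Gamma$.
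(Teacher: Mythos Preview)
Your proof is correct and is exactly the approach the paper intends: the corollary is stated immediately after \autoref{the:lifting} with no separate proof, so it is meant to follow from a single application of lifting along $(s_A,t_A)$ against the equivalence $r_B$, which is precisely the square you set up and verify. Your treatment of the ``in particular'' clause is also the right reading of the hint $f=1_A$.
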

    Although they are 1-categories, path categories already contain higher structure.
    In particular, they are enriched in \(\infty\)-groupoids \cite{paauw2021path}: a map between morphisms \(f,f':A\to B\) is an homotopy, that is, a morphism \(h:A\to PB\), and so on.
    
    We have a 2-category \(\sf{PathCat}\) of path categories, functors preserving the structure (fibrations, trivial fibrations, pullbacks, and the terminal object), and natural transformations.
    Such functors also preserve equivalences and therefore path objects by Brown's lemma \cite{Brown1973}.

    We call path categories \emph{structured} if there is a choice of fibrations (the \emph{strict} fibrations) that generates the fibrations by repletion, and choices of path fibrations, pullbacks of strict fibrations to strict fibrations, and terminal object.
    We call it \emph{strict} if these choices strictly respect identity, composition, and path objects.
    
    \section{From path categories to weak models and back}\label{sec:iso}
    
    We anticipated that path categories would provide a natural semantics for axiomatic \(=\)-types.
    In this section, we establish the rigorous correspondence by rephrasing a path category as a display map category with weakly-stable axiomatic \(=\)-types and three other properties: a root, (pseudo-stable) extensional \(\bb1\)-types, and (pseudo-stable) extensional \(\Sigma\)-types.
    In \autoref{sec:disppathcat}, we introduce a version of path categories that does not necessarily model extensional \(\bb1\)-types and $\Sigma$-types, and hence provides semantics for a minimal axiomatic type theory with only axiomatic \(=\)-types.
    We will also see how we can extend this with other axiomatic type formers.
    Let us start from these last properties:

    \begin{proposition}\label{prop:clanasdisp}
        A display map category has: \begin{itemize}
            \item extensional \makesize[r]{\(\bb1\)}{\(\Sigma\)}-types iff all identity maps are display maps;
            \item extensional \(\Sigma\)-types iff display maps are closed under composition.
        \end{itemize}
        So, a display map category is equal to a clan (as categories with a specified subclass of maps) iff it is rooted and has extensional \(\bb1\)- and \(\Sigma\)-types.
        In addition, extensional \(\bb1\)- and \(\Sigma\)-types are always pseudo-stable.
    \end{proposition}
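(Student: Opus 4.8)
The plan is to handle both biconditionals by one mechanism and then read off the clan statement and pseudo-stability. The unifying observation is that, for a type former whose formation and introduction rules produce a type \(T\) over \(\Gamma\) together with a \emph{constructor} \(k\), the \(\beta\)- and \(\eta\)-rules together amount to the statement that re-indexing along \(k\) is a bijection on terms: for every type \(C\) over the extended context \(\Gamma.T\), the assignment \((-)[k]\) from terms of \(C\) over \(\Gamma.T\) to terms of \(C[k]\) over the source of \(k\) is a bijection, with inverse \(d \mapsto \ind_d\) (the \(\beta\)-rule gives \(\ind_d[k] = d\), the \(\eta\)-rule gives \(\ind_{c[k]} = c\)). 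For \(\bb1_\Gamma\) the constructor is the point \(*_\Gamma : \Gamma \to \Gamma.\bb1_\Gamma\), satisfying \(p_{\bb1_\Gamma}*_\Gamma = 1_\Gamma\); for \(\Sigma_A B\) it is the pairing \(\pair : \Gamma.A.B \to \Gamma.\Sigma_A B\), satisfying \(p_{\Sigma_A B}\pair = p_A p_B\). The strategy is to show that, in an extensional setting, \(k\) is an isomorphism — whence repleteness of the display maps yields the stated closure conditions — and conversely that these closure conditions let one exhibit extensional \(\bb1\)- resp. \(\Sigma\)-types directly.

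For the ``if'' directions the constructions are immediate. If every identity is a display map, set \(\Gamma.\bb1_\Gamma \coloneqq \Gamma\), \(p_{\bb1_\Gamma} \coloneqq 1_\Gamma\), \(*_\Gamma \coloneqq 1_\Gamma\), \(\ind_d \coloneqq d\); all \(\bb1\)-rules, including \(\eta\), hold on the nose. If display maps are closed under composition, set \(\Gamma.\Sigma_A B \coloneqq \Gamma.A.B\), \(p_{\Sigma_A B} \coloneqq p_A p_B\) (a display map by hypothesis), \(\pair \coloneqq 1_{\Gamma.A.B}\), \(\ind_d \coloneqq d\); again all rules hold on the nose, and the canonical pasting of pullbacks shows these choices are even strictly stable.

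For the ``only if'' directions — the heart of the proof — assume extensionality and show \(k\) is invertible. First, \(k\) is a split monomorphism: for \(\bb1\) the retraction is simply \(\phi \coloneqq p_{\bb1_\Gamma}\), and for \(\Sigma\) one builds \(\phi = (q_1,q_2) : \Gamma.\Sigma_A B \to \Gamma.A.B\) by \(\Sigma\)-elimination, obtaining the first projection \(q_1\) (a term of \(A\) re-indexed along \(p_{\Sigma_A B}\)) from the weakening of the variable term of \(A\), and \(q_2\) (a term of \(B\) re-indexed along \(q_1\)) from the variable term of \(B\); the \(\beta\)-rule then gives \(\phi\pair = 1_{\Gamma.A.B}\) and \(p_A p_B\,\phi = p_{\Sigma_A B}\). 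Second, \(k\) is a split epimorphism: apply the re-indexing bijection to \(C \coloneqq T^\liltriangle\) (the formed type weakened along its own display map), of which \(k\phi\) and \(1_{\Gamma.T}\) are both terms; since \(\phi k = 1\) forces \(k\phi k = k\), both \((k\phi)[k]\) and \(1[k]\) correspond to \(k\) itself, so injectivity of \((-)[k]\) gives \(k\phi = 1_{\Gamma.T}\). Hence \(k\) is an isomorphism, and so \(p_{\bb1_\Gamma}\) is an isomorphism — making each \(1_\Gamma\), isomorphic to \(p_{\bb1_\Gamma}\) in \(\cal C^\to\) via the square \((p_{\bb1_\Gamma},1_\Gamma)\), a display map by repleteness — while \(p_{\Sigma_A B}\pair = p_A p_B\) exhibits each composite \(p_A p_B\) as isomorphic in \(\cal C^\to\) to the display map \(p_{\Sigma_A B}\), again a display map by repleteness. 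The main obstacle is the honest bookkeeping in this \(\Sigma\) case — in particular identifying the re-indexed types \(A[p_{\Sigma_A B}][\pair]\) and \(B[q_1][\pair]\) with \(A^\liltriangle[p_B]\) and \(B^\liltriangle\), which in a structured display map category has to be read through the canonical isomorphisms between chosen re-indexings, exactly as in the footnotes to the semantic \(=\)-type definition.

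The remaining claims follow quickly. A clan is precisely a display map category whose display maps additionally contain all identities, are closed under composition, and which has a terminal object to which every map is a display map; but once identities and composites are display maps, ``fibration'' collapses to ``display map'', so the last condition is exactly rootedness. Combined with the two biconditionals, this shows that a display map category \emph{is} a clan if and only if it is rooted and has extensional \(\bb1\)- and \(\Sigma\)-types, and the two passages between clans and such display map categories are mutually inverse because on both sides the display maps literally are the fibrations. Finally, pseudo-stability is automatic: up to isomorphism over the context, \(\bb1_\Gamma\) is \(1_\Gamma\) and \(\Sigma_A B\) is the composite \(p_A p_B\), and re-indexing preserves both up to canonical isomorphism (by pasting of pullbacks for \(\Sigma\)); since the rest of the structure is determined up to isomorphism by the formed type together with the fact that the constructor is an isomorphism, these isomorphisms respect the \(\bb1\)- resp. \(\Sigma\)-structure.
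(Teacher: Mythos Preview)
Your proof is correct and follows essentially the same route as the paper: show that the constructor (\(*_\Gamma\) for \(\bb1\), \(\pair\) for \(\Sigma\)) is an isomorphism using the \(\beta\)- and \(\eta\)-rules, then invoke repleteness to conclude that identities, respectively composites, are display maps, with the converse given by taking the type former to be the identity, respectively the composite, on the nose. The paper compresses the ``constructor is invertible'' step to a single clause, whereas you spell it out via the bijection \(c\mapsto c[k]\) on terms and its application to \(C=T^\weak\); this is exactly the content behind the paper's phrase ``by the introduction and \(\beta,\eta\)-rules''.
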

    \begin{proof}
        If we have extensional \(\bb1\)-types, then for any \(\Gamma\), the display map \(\Gamma.\bb1\disp\Gamma\) is an isomorphism by the introduction and \(\beta,\eta\)-rules.
        As any isomorphism of \(\cal C\) is isomorphic to the identity in \(\cal C^\to\), the latter is a display map as well.
        Similarly, if we have extensional \(\Sigma\)-types then for any composable display maps \(\Gamma.A.B\disp\Gamma.A\disp\Gamma\), we have a display map \(\Gamma.\Sigma_AB\disp\Gamma\) that is isomorphic in \(\cal C^\to\) to the composition, hence the latter is a display map as well. Vice versa, define \(\bb1_\Gamma\) as \(1_\Gamma:\Gamma\to\Gamma\) and \(\Sigma_AB\) as \(\Gamma.A.B\disp\Gamma.A\disp\Gamma\).

        Pseudo-stability follow from the fact that isomorphisms are preserved by re-indexing.
    \end{proof}
    To show that path categories also have axiomatic \(=\)-types, we adapt \cite[Proposition 4.5]{MR3828037}:
    
    \begin{restatable}{proposition}{pathtodisp}\label{prop:pathtodisp}
        A path category has weakly stable based axiomatic \(=\)-types.
        Therefore --- by considering every fibration to be a display map --- a path category is a rooted display map category with weakly stable axiomatic \(=\)-types, and extensional \(\bb1\)- and \(\Sigma\)-types.
    \end{restatable}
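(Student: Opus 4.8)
The plan is to model the based axiomatic \(=\)-type of a type \(A\) in context \(\Gamma\) — equivalently, of a fibration \(p_A\colon\Gamma.A\fib\Gamma\) — by the fibred path object of \(A\fib\Gamma\) inside the slice path category \(\cal C/^{\sf{fib}}\Gamma\). Concretely, choose the re-indexing so that \(\Gamma.A.A^{\liltriangle}=\Gamma.A\times_\Gamma\Gamma.A\) and put \(\Id_A\coloneqq\bigl((s,t)\colon P_\Gamma A\fib\Gamma.A\times_\Gamma\Gamma.A\bigr)\) and \(\refl_A\coloneqq r\colon\Gamma.A\to P_\Gamma A\); since \((s,t)r=(1,1)=\delta_A\), this \(r\) is a term of \(\Id_A[\delta_A]\), as required. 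Once we declare every fibration of \(\cal C\) to be a display map, \autoref{prop:clanasdisp} gives the remaining assertions for free: a clan has all identities among its fibrations, is closed under composition of fibrations, and has every \(\Gamma\to 1\) a fibration, so we obtain a rooted display map category with (pseudo-stable) extensional \(\bb1\)- and \(\Sigma\)-types. It therefore remains to equip \(\Id_A,\refl_A\) with elimination and the \(\beta\)-axiom, and to verify weak stability.

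The homotopical content is the contractibility of the based path space. Fix a term \(a\colon\Gamma\to\Gamma.A\) and write \(T\coloneqq\Gamma.A.\Id_A[a^{\liltriangle}]\), with fibration \(q\colon T\fib\Gamma.A\) and section \((a,\refl[a])\colon\Gamma\to T\). Working in \(\cal C/^{\sf{fib}}\Gamma\), the morphism \(a\) is a map \(\mathbf a\colon\mathbf 1\to\mathbf A\), and — up to the reversal symmetry of the path object — the object \(T\) is the mapping path space \(L\mathbf a\) of \autoref{prop:factorization}, with \((a,\refl[a])\) corresponding to \(w_{\mathbf a}=(1,r\mathbf a)\). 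By that proposition the projection \(L\mathbf a\to\mathbf 1\) is a pullback of the source map of a path object, hence a trivial fibration; so \(p_Aq\colon T\fib\Gamma\) is a trivial fibration and, by 2-out-of-3, its section \((a,\refl[a])\colon\Gamma\tto T\) is an equivalence (this follows the pattern of \cite[Proposition 4.5]{MR3828037}). Elimination and the \(\beta\)-axiom now come from \autoref{the:lifting}: given a type \(C\) over \(T\), i.e.\ a fibration \(p_C\colon\Gamma.A.\Id_A[a^{\liltriangle}].C\fib T\), and a term \(d\) of \(C[a,\refl[a]]\), i.e.\ a map \(d\colon\Gamma\to\Gamma.A.\Id_A[a^{\liltriangle}].C\) over \((a,\refl[a])\), apply Lifting to the square with left edge the equivalence \((a,\refl[a])\), bottom edge \(1_T\), right edge \(p_C\), and top edge \(d\). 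The lift \(l\) satisfies \(p_Cl=1_T\), so it is a section of \(C\), i.e.\ the term \(\ind^=_d\); and Lifting provides a fibrewise homotopy \(l\circ(a,\refl[a])\simeq_T d\). As both maps lie over \((a,\refl[a])\colon\Gamma\to T\), this homotopy over \(T\) is in fact a fibrewise homotopy over \(\Gamma\) between two terms of \(C[a,\refl[a]]\), and — using that the homotopy relation is independent of the chosen path object — yields a term \(\beta^=_d\) of \(\Id_{C[a,\refl[a]]}[\ind^=_d[a,\refl[a]],d]\).

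For weak stability, fix \(\sigma\colon\Delta\to\Gamma\). Choosing the re-indexings coherently, \(\Id_A[\sigma^{\liltriangle\liltriangle}]\) is the pullback of \((s,t)\colon P_\Gamma A\fib\Gamma.A\times_\Gamma\Gamma.A\) along \(\Delta.A[\sigma]\times_\Delta\Delta.A[\sigma]\to\Gamma.A\times_\Gamma\Gamma.A\), which is canonically \(\Delta\times_\Gamma P_\Gamma A\), and the map induced by \(r\) is \(1_\Delta\times_\Gamma r\colon\Delta.A[\sigma]\to\Delta\times_\Gamma P_\Gamma A\). The key point is that this map is an equivalence: since pullbacks compose, \(1_\Delta\times_\Gamma s\colon\Delta\times_\Gamma P_\Gamma A\to\Delta.A[\sigma]\) is the pullback of the trivial fibration \(s\) along a projection, hence a trivial fibration, and it is split by \(1_\Delta\times_\Gamma r\), so the latter is an equivalence by 2-out-of-3. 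Hence \(\Id_A[\sigma^{\liltriangle\liltriangle}]\) together with \(\refl_A[\sigma^{\liltriangle}]\) is again a fibred path object, now of \(A[\sigma]\fib\Delta\); the isomorphism demanded in the definition of weak stability is just the coherence isomorphism between \(\Id_A[\delta_A][\sigma^{\liltriangle}]\) and \(\Id_A[\sigma^{\liltriangle\liltriangle}][\delta_{A[\sigma]}]\) coming from \(\delta_A\sigma^{\liltriangle}=\sigma^{\liltriangle\liltriangle}\delta_{A[\sigma]}\). Re-running the previous paragraph with this path object supplies the \(=\)-type data on \(\Id_A[\sigma^{\liltriangle\liltriangle}]\).

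I expect the weak-stability step to be the main obstacle — not because it is deep, but because it requires carefully tracking iterated weakenings \(\sigma^{\liltriangle\liltriangle}\) and which fibre products are formed over \(\Gamma\) versus over \(\Delta\); crucially it rests on stability of trivial fibrations under pullback rather than of equivalences, which fails in general. The other delicate point is the identification of the based path space \(T\) with a mapping path space, which uses the reversal symmetry of path objects; one could instead fix the convention \(\Id_A\coloneqq\bigl((t,s)\colon P_\Gamma A\fib\Gamma.A\times_\Gamma\Gamma.A\bigr)\) to make this identification hold on the nose.
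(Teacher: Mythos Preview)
Your proof is correct and follows essentially the same route as the paper: formation/introduction via the fibred path object \(P_\Gamma A\), elimination and the \(\beta\)-axiom via \autoref{the:lifting} after showing \((a,\refl[a])\) is an equivalence (section of a trivial fibration obtained by pulling back a source/target map), and weak stability by checking that the pullback of a path object is again a path object. Your packaging of the ``\((a,\refl[a])\) is an equivalence'' step through the mapping path space \(L\mathbf a\) is just a relabelling of the paper's direct observation that \(p_Ap_{\Id_A[a^\liltriangle]}\) is a pullback of \(s\) (or \(t\)); your remark about the reversal convention is apt, and the paper is slightly looser on this point. The only place where the paper spends noticeably more ink is in translating the fibrewise homotopy \(l\circ(a,\refl[a])\simeq_T d\) into an actual term of \(\Id_{C[a,\refl[a]]}[\ind^=_d[a,\refl[a]],d]\), carefully factoring through the relevant pullbacks; your one-sentence appeal to independence of path objects is correct but compresses that bookkeeping.
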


    \begin{proof}[Proof Sketch.]
        Formation and introduction for a type $A$ in context $\Gamma$ are provided by a path object for the fibration $A\fib\Gamma$. Elimination and the $\beta$-axiom follow by the lifting theorem (\autoref{the:lifting}).
        Weak stability follows from the fact that the re-indexing of a path object is a path object itself.
        See \autoref{app:iso} for a complete proof.
    \end{proof}
    For the converse, we first translate different notions of axiomatic =-types:
    
    \begin{restatable}{proposition}{identityvariations}\label{prop:identityvariations}
        For a display map category, the existence of the following weakly stable axiomatic identity types is equivalent: \[\hfill
            \text{unparametrized based }\liff\text{ parametrized based }\liff\text{ parametrized unbased},
        \hfill\] whereas the existence of weakly stable unparametrized unbased axiomatic $=$-types is weaker: transport is not definable with unbased path induction \emph{(and no \(\Pi\)-types)}.
    \end{restatable}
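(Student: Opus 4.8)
The plan is to establish the three equivalences as a small cycle of implications, most of which are routine, and to isolate the two steps that carry the real content. Passing from a parametrised eliminator to its unparametrised form is immediate: take the telescope $\Omega$ to be empty (the identity fibration on the ambient context), and the resulting $\ind^=_d$, $\beta^=_d$ and weak-stability data are literally those of the unparametrised rule. Parametrised \emph{based} $\Id$-types yield the parametrised \emph{unbased} ones by generalising the context: to eliminate an unbased motive $C$ over $\Gamma.A.A^\liltriangle.\Id_A.\Omega$ with datum $d$ over the $\refl$-diagonal, one applies the based rule over the base context $\Gamma.A$ with the generic variable of $A$ as base point (using weak stability to identify the two $\Id$-types that appear), and checks that the outputs assemble into the required unbased $\ind^=$, $\beta^=$; weak stability on the based side is exactly what makes this assembled data weakly stable. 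Since parametrised based $\Id$-types also give the unparametrised based ones (empty telescope again), it remains only to close the cycle.

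The two substantive implications are: from weakly stable parametrised \emph{unbased} $\Id$-types recover the parametrised \emph{based} ones, and from weakly stable \emph{unparametrised based} $\Id$-types recover the parametrised (Frobenius) version. Both are instances of the homotopical argument already used for \autoref{prop:pathtodisp}. The $\Id$-types equip every fibration $A\fib\Gamma$ with a path object, and the available elimination rule shows that the relevant point inclusions --- for the based rule, $(a,\refl[a])\colon\Gamma\to\Gamma.A.\Id_A[a^\liltriangle]$ --- behave like trivial cofibrations: every display map over the codomain has a section over the codomain extending any prescribed section over the domain, up to propositional equality. One then carries out the construction underlying the lifting theorem (\autoref{the:lifting}) and saturation (\autoref{the:saturation}) directly from this data --- it goes through because the $\Id$-types supply exactly the path-object structure it uses --- to produce the missing $\ind^=_d$ and $\beta^=_d$. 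Weak stability of the input feeds in twice: it makes the output weakly stable, and it makes the lifting \emph{stable under pullback along fibrations}, which is precisely the extra strength that the telescope $\Omega$ records in the parametrised rule. I expect this Frobenius-type stability to be the main obstacle: the naive syntactic shortcut --- absorbing $\Omega$ into the motive, or extending a based motive to a family over both endpoints --- needs $\Pi$-types and is otherwise circular, so one really has to perform the homotopy-theoretic construction and verify the pullback-stability by hand, in the spirit of \autoref{prop:factorization} and the slicing lemmas.

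For the final, ``weaker'' clause, the key observation is that the transport $\alpha_*b$ of a term $b$ of $B[a]$ along $\alpha\colon a=_Aa'$ is exactly $\ind^=_b(\alpha)$ for the based motive obtained by weakening a type $B$ over $\Gamma.A$ --- a motive that does not mention the path variable. It therefore suffices to cite, or reprove, the known fact that transport is not definable from unparametrised unbased path induction without $\Pi$-types: concretely, one exhibits a display map category that carries weakly stable \emph{unparametrised unbased} axiomatic $\Id$-types but in which some transport map fails to exist. Pinning down such a model --- rigid enough that unbased path induction still holds, yet with no available transport --- is the delicate point of this direction; once it is in hand, the separation from the three equivalent notions above is immediate, since each of them proves transport by the argument of the previous paragraph (already the unparametrised based one does).
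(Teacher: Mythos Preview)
Your outline of the equivalence cycle is sound and, for the direction ``parametrised based $\Rightarrow$ parametrised unbased'', matches the paper exactly: the paper also works over the extended base context $\Gamma.A$, takes the generic variable $\delta_A$ as base point for the based rule on $A^\liltriangle$, and reads off the unbased eliminator as a specific re-indexing along $\delta_A^\liltriangle$. Where you diverge is in the two ``substantive'' implications. For ``unparametrised based $\Rightarrow$ parametrised based'' the paper does not rebuild lifting and saturation from scratch; it simply adapts Bocquet's direct type-theoretic argument (which uses that based path induction already gives transport, and then bootstraps the Frobenius datum from transport plus the unparametrised eliminator) to the non-split setting. Your homotopical reconstruction should also work, but note the circularity hazard you half-acknowledge: the lifting theorem as stated is a result \emph{about} path categories, and the paper obtains the path-category structure on a display map category only \emph{after} invoking this proposition (in \autoref{prop:disptopath}). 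So you must genuinely reprove the relevant fragment of lifting with only the unparametrised based rule as input, not cite it; the paper sidesteps this by keeping the argument syntactic.

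For the ``weaker'' clause your proposal is only a specification, not a proof: you correctly identify that one needs a display map category with weakly stable unparametrised unbased $\Id$-types in which some transport fails, but you do not construct it. This is precisely where the paper supplies concrete content. It builds the syntactic category of a theory with only structural rules, $=$F, $=$I, primitive types $R$ and $x{:}R\vdash S(x)$, and primitive terms $r,r':R$, $\rho:r=_Rr'$, $\rho^{-1}:r'=_Rr$, $s:S(r)$; observes that $S(r')$ has no closed term (so transport fails); and then checks by a case analysis on normal forms that the Martin--L\"of unbased eliminator and its $\beta$-rule are nonetheless admissible. The presence of the primitive inverse $\rho^{-1}$ is essential, since symmetry is one of the cases the admissibility argument must cover. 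Without exhibiting such a model your last paragraph is a plan, not a proof.
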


    \begin{proof}
        The proof of the first equivalence is obtained by adapting Bocquet's argument \cite[Section 3]{Bocquet2020} to non-strict models.
        Regarding the second equivalence, a parametrized unbased axiomatic $=$-type for a type $A$ in a given context $\Gamma$ can be defined as a specific re-indexing of a parametrized based axiomatic $=$-type for \textit{some} $A^\liltriangle$ over $\Gamma.A$ --- namely, along the corresponding morphism $\delta_A^\liltriangle$.
        
        The strategy to prove that transport is not admissible in a theory with only the Martin-L\"of elimination and computation rules and no \(\Pi\)-types --- and hence that weakly stable unparametrized unbased axiomatic $=$-types are weaker than the other notions --- adapts an argument by Bocquet \cite[page 15]{Bocquet2020}. We also refer the reader to \cite{MR2469279}.
        
        We refer the reader to \autoref{app:iso} for a complete proof.
    \end{proof}
  
    \begin{restatable}{proposition}{disptopath}\label{prop:disptopath}
        A rooted display map with weakly stable based axiomatic $=$-types and extensional \(\bb1\)- and \(\Sigma\)-types can be given the structure of a path category where the weak equivalences are defined as the homotopy equivalences.
    \end{restatable}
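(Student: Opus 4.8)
The plan is to start from \autoref{prop:clanasdisp}: since $\cal C$ is rooted and has extensional $\bb1$- and $\Sigma$-types, it is already a clan, with the display maps being exactly the fibrations, these being closed under identity, composition, and pullback, and with every map to the terminal object a fibration. Finite products therefore exist and their projections are fibrations. It then remains to single out a class of weak equivalences on this clan and to verify the five conditions in the definition of a path category (\autoref{pathcat}).

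First I would read off path objects from the axiomatic $=$-types. Given a fibration $A\fib\Gamma$, regarded as a type $A$ in context $\Gamma$, the datum $\Id_A$ in context $\Gamma.A.A^\liltriangle$ is a display map, hence a fibration $(s,t)\colon P_\Gamma A\fib A\times_\Gamma A$, and $\refl_A$ gives a map $r_A\colon A\to P_\Gamma A$ with $(s,t)r_A=(1,1)$. In particular $PA:=P_1A$ is a candidate path object for every object $A$. Fixing these choices, I would define $f\simeq g$ for $f,g\colon A\to B$ to mean that there is an $h\colon A\to PB$ with $(s,t)h=(f,g)$ (and likewise $\simeq_\Gamma$ using $P_\Gamma$), and declare the weak equivalences to be the homotopy equivalences. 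One should note that this class is independent of the choice of $=$-types, since weak stability makes any two $=$-types for the same $A$ comparable by a fibrewise equivalence respecting $r_A$.

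The bulk of the work is to reconstruct the basic calculus of path categories from the display-map side, roughly in the following order. (i) Using $=$-elimination together with the $\beta$-axiom, show that $r_A$ is a fibrewise homotopy equivalence with homotopy inverse $s$ (equivalently $t$) --- intuitively, based path induction contracts an arbitrary path to a reflexivity path up to homotopy --- and deduce that $s,t\colon P_\Gamma A\to A$ are homotopy equivalences. (ii) Again by $=$-elimination, now crucially using weak stability to reindex $\Id$-types and $\refl$ along substitutions, construct transport along paths and, building on it, the lifting property: in a commuting square whose left leg is a weak equivalence and whose right leg is a fibration there is a diagonal making the lower triangle commute and the upper one commute up to fibrewise homotopy, unique up to fibrewise homotopy; this is exactly \autoref{the:lifting}, now proved intrinsically. (iii) From path inversion and concatenation (further instances of $=$-elimination) together with transport, show that $\simeq$ is a congruence: an equivalence relation on each hom-set, stable under pre- and post-composition, so that the homotopy category $\operatorname{Ho}(\cal C)$ is defined and the homotopy equivalences are precisely the maps it inverts. (iv) Using (ii), establish that a fibration $p\colon A\fib\Gamma$ is also a homotopy equivalence if and only if it admits a section $\sigma$ with $\sigma p\simeq_\Gamma 1_A$.

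Granting these lemmas, the five axioms fall out. Isomorphisms are homotopy equivalences, using $r$ for the required homotopies. The class of homotopy equivalences satisfies 2-out-of-6 because it coincides with the isomorphisms of $\operatorname{Ho}(\cal C)$, and isomorphisms in any category satisfy 2-out-of-6. The criterion in (iv) is manifestly stable under pullback, since one pulls back both the section and the fibrewise homotopy, so a pullback of a trivial fibration is a trivial fibration; and the same criterion equips every trivial fibration with a section. Finally, $PA$ together with $(s,t)\colon PA\fib A\times A$ and $r_A\colon A\tto PA$ is a path object by (i). I expect steps (ii) and (iii) to be the main obstacle: what makes them delicate is precisely that we only have \emph{weakly} stable $=$-types, so the chosen $\Id_A$ is preserved by reindexing only up to equivalence, and every construction --- transport, concatenation, the filler of \autoref{the:lifting} --- has to be transported across these comparison equivalences and checked to be coherent up to homotopy. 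This amounts to re-running the identity-type calculus of Van den Berg and Moerdijk in the non-strict setting; the full argument is deferred to \autoref{app:iso}.
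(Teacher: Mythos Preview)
Your plan is sound and would work, but it takes a different and considerably longer route than the paper. The paper's proof does not verify the five path-category axioms directly from based $=$-types. Instead it makes two reductions and then cites an existing result. First, it invokes \autoref{prop:identityvariations} to pass from weakly stable \emph{based} axiomatic $=$-types to weakly stable \emph{parametrized unbased} ones; second, it checks that weak stability yields the ``logical equivalence'' condition (maps $\Id_\Omega[\sigma^{\liltriangle\liltriangle}]\leftrightarrows\Id_{\Omega[\sigma]}$ over the canonical isomorphism of base contexts); finally, it appeals to Van den Berg \cite[Theorem 5.16]{MR3828037}, which states precisely that a clan equipped with such $=$-types carries a unique path-category structure whose weak equivalences are the homotopy equivalences. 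All of your steps (i)--(iv) and the subsequent axiom verification are thus packaged into that citation.

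What your approach buys is self-containment: you never leave the based formulation and you re-derive lifting, transport, congruence of $\simeq$, and the section criterion for trivial fibrations by hand. What the paper's approach buys is brevity and a clean separation of concerns --- the only new work is the short verification that weak stability gives the logical-equivalence comparison maps, after which everything is delegated to \cite{MR3828037}. If you want to align with the paper, you should insert the passage through \autoref{prop:identityvariations} and replace your items (i)--(iv) by the citation; if you prefer your direct argument, be aware that the delicate coherence checks you flag in (ii)--(iii) are exactly what Van den Berg's theorem already absorbs.
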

    \begin{proof}[Proof Sketch.]
        By Proposition \ref{prop:identityvariations} and Propositional \ref{prop:clanasdisp} --- by looking at its display map class as a class of fibrations --- we infer that \(\cal C\) is a clan and is endowed with weakly stable \textit{parametrized unbased} axiomatic $=$-types. Now, one can observe that weak stability implies the existence of arrows: \begin{equation}\label{equ:logicalequivalence}    \Id_\Omega[\sigma^{\liltriangle\liltriangle}]\to\Id_{\Omega[\sigma]}\qquad\text{and}\qquad\Id_{\Omega[\sigma]}\to\Id_\Omega[\sigma^{\liltriangle\liltriangle}] \tag{logical equivalence}
        \end{equation} over the canonical isomorphism $\Omega[\sigma]\times_\Delta\Omega[\sigma] \iso (\Omega \times_\Gamma\Omega)[\sigma^\liltriangle]$, for every fibration $\Omega\fib\Gamma$ and every substitution $\Delta\xrightarrow{\sigma}\Gamma$. In other words, the equivalence relations represented by $\Id_\Omega[\sigma^{\liltriangle\liltriangle}]$ and $\Id_{\Omega[\sigma]}$ coincide. We provide full verification of this fact in \autoref{app:iso}.
        Van den Berg \cite[Theorem 5.16]{MR3828037} proves that these data on $\mathcal{C}$ are in fact equivalent to the existence and the choice of a class of weak equivalences on $\mathcal{C}$ --- in fact provided by its homotopy equivalences --- making the clan $\mathcal{C}$ into a path category, concluding our argument.
    \end{proof}

    The correspondence delineated by \autoref{prop:pathtodisp} and \autoref{prop:disptopath} extends to an equality of the 2-categories; for which we provide details in \autoref{app:iso}:
    \begin{restatable}{theorem}{pathasdisp}\label{the:pathasdisp}
        There exists an isomorphism of 2-categories: $\sf{PathCat}\cong\sf{DispCat}^{\sf{root}}_{=_{\sf{axi}},\bb1_{\sf{ext}},\Sigma_{\sf{ext}}}$.
    \end{restatable}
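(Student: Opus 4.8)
The plan is to construct strictly mutually inverse 2-functors between \(\sf{PathCat}\) and \(\sf{DispCat}^{\sf{root}}_{=_{\sf{axi}},\bb1_{\sf{ext}},\Sigma_{\sf{ext}}}\) that leave the underlying category, functors, and natural transformations untouched, so that the whole problem reduces to checking that the structure and conditions imposed on \(0\)-, \(1\)-, and \(2\)-cells agree under this identification. Essentially all the mathematical content has already been packaged into the preceding propositions, so the work here is bookkeeping together with one genuine comparison.

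\emph{On \(0\)-cells.} First I would invoke \autoref{prop:clanasdisp} to identify rooted display map categories with extensional \(\bb1\)- and \(\Sigma\)-types with clans, the display maps becoming the fibrations. On such a category, \autoref{prop:identityvariations} lets me pass freely between the based and the parametrized unbased forms of weakly stable axiomatic \(=\)-types, and then Van den Berg's Theorem 5.16 \cite[Theorem 5.16]{MR3828037} --- as used in the proof of \autoref{prop:disptopath} --- shows that this structure on a clan is equivalent to equipping it with a class of weak equivalences satisfying the path-category axioms; moreover Saturation (\autoref{the:saturation}) forces that class to be the homotopy equivalences, so it carries no information beyond the existence of the \(=\)-types. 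In the other direction \autoref{prop:pathtodisp} sends a path category, with its fibrations taken as display maps, into the target 2-category. Hence the two 2-categories have the same \(0\)-cells on the nose.

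\emph{On \(1\)-cells.} Next I would compare the morphism conditions. A functor of path categories preserves fibrations, their pullbacks, the terminal object, and trivial fibrations; by \autoref{prop:clanasdisp} the first three conditions amount exactly to being a morphism of rooted display map categories, so the point is to match "preserves trivial fibrations" with "weakly preserves the weakly stable type formers" (whose definition is recalled in \autoref{app:iso}). For \(\bb1\)- and \(\Sigma\)-types this is automatic, since they are modelled by identity maps and composites of display maps, which every such functor preserves. For \(=\)-types, if \(F\) preserves trivial fibrations then Ken Brown's lemma \cite{Brown1973} gives that \(F\) preserves all weak equivalences, and since a path object is the factorization of a diagonal \(\delta_A\colon A\to A\times_\Gamma A\) as an equivalence followed by a fibration --- each ingredient preserved by \(F\) --- the image of a path object is again a path object, which unwinds precisely to weak preservation of \(=\)-types; conversely, if \(F\) weakly preserves \(=\)-types then it preserves path objects up to equivalence, hence preserves fibrewise homotopy, hence homotopy equivalences, which by Saturation are the weak equivalences, hence trivial fibrations.

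\emph{On \(2\)-cells and functoriality.} On both sides the \(2\)-cells are plain natural transformations of the underlying functors, with no further conditions, so there is nothing to check. Since each assignment leaves the underlying data untouched, composition and identities are respected strictly and the two 2-functors are honestly mutually inverse, yielding the claimed isomorphism. I expect the only real obstacle to be the \(1\)-cell comparison: one must carefully unfold the \autoref{app:iso} notion of weak preservation of a weakly stable type former and verify that, for a clan functor, it coincides with preservation of trivial fibrations --- keeping track of the slice path categories \(\cal C/^{\sf{fib}}\Gamma\), of the canonical equivalences between competing \(=\)-types, and of the fibrewise homotopies appearing in the elimination and \(\beta\)-data. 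Everything else is routine.
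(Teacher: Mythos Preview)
Your proposal is correct and follows essentially the same approach as the paper's proof: both establish the bijection on \(0\)-cells via \autoref{prop:pathtodisp} and \autoref{prop:disptopath} together with saturation (so that the equivalence class is forced), both observe that the \(2\)-cells are all natural transformations, and both reduce the \(1\)-cell comparison to matching preservation of trivial fibrations/weak equivalences against weak preservation of \(=\)-types (with \(\bb1\), \(\Sigma\) automatic). The only cosmetic difference is that the paper argues the converse \(1\)-cell direction by unfolding a homotopy equivalence as a pair of maps with terms of \(\Id\)-types witnessing the round-trips, whereas you phrase it as ``preserves path objects up to equivalence \(\Rightarrow\) preserves fibrewise homotopy \(\Rightarrow\) preserves homotopy equivalences''; these are the same argument.
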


    \section{Display map path categories}\label{sec:disppathcat}

    To define a more fine-grained semantics that does not necessarily model extensional \(\bb1\)-types and extensional \(\Sigma\)-types, we will add some additional structure to path categories: we exchange the clan for a display map category to distinguish display maps and fibrations.
    
    \begin{definition}[Display map path category]
        A \emph{display map path category} is a display map category \(\cal C\) together with a class \(\cal C^{\sf{eq}}\subseteq\cal C^\to\) of \emph{(weak) equivalences (\(\tto\))} such that: \begin{description}
            \item[\emph{1.}]
            isomorphisms are equivalences,
            \item[\emph{2.}]
            equivalences satisfy 2-out-of-6: if for \(\smash{A\overset f\to B\overset g\to C\overset h\to D}\) we have that both \(gf\) and \(hg\) are equivalences, then so are \(f\), \(g\), \(h\), and \(hgf\),
            \item[\emph{3.}]
            the pullback of a trivial fibration along any map is also a trivial fibration,
            \item[\emph{4.}]
            every trivial fibration has a section,
            \item[\emph{5.}]
            every display map \(A\disp\Gamma\) has a \emph{path display map}: \\
            a display map \((s,t):P_\Gamma A\disp A\times_\Gamma A\) and equivalence \(r:A\tto P_\Gamma A\) with \((s,t)r=(1,1)\).
            \item[\emph{\graybf{PF.}}]
            every fibration \(\Delta\fib\Gamma\) has a \emph{path fibration}: \\
            a fibration \((s,t):P_\Gamma\Delta\fib \Delta\times_\Gamma\Delta\) and equivalence \(r:\Delta\tto P_\Gamma\Delta\) with \((s,t)r=(1,1)\).
        \end{description}
    \end{definition}
    We call a display map path category \emph{rooted} if the underlying display map category is.
    Every rooted display map path category is in particular a path category, and every path category gives a cofree rooted display map path category by considering every fibrations to be a display map.
    Now a natural question is: are path display maps \(P_\Gamma A\disp A\times_\Gamma A\) already enough to define path fibrations \(P_\Gamma\Delta\fib\Delta\times_\Gamma\Delta\)?
    This is the case in the syntax: with \(=\)-types (and transport) we can define =-telescopes.
    For a telescope \(\vec x:\vec A\) of \(\Gamma\) we define the =-telescope \(\vec\chi:\vec\chi_*\vec x=_{\vec A}\vec x'\) of \(\Gamma,\vec x:\vec A,\vec x':\vec A\) where: \begin{alignat*}{4}
        \chi_i&:(\chi_{i-1})_*\,\cdots\,(\chi_0)_*\,x_i=_{A_i[x_0',\dots,x_{i-1}'/x_0,\dots,x_{i-1}]}x_i'.
    \end{alignat*}
    The same is true in general.
    However, we do need to assume a notion of transport:\begin{theorem}\label{the:equivalent_axioms}
        In the presence of axioms \emph{\graybf{1--5}} and a root, the following are equivalent: \begin{description}
            \item[\emph{PO.}]
            \emph{(path objects)} every object \(\Gamma\) has a path object \(P\Gamma\).
            \item[\emph{PF.}]
            \emph{(path fibrations)} every fibration \(\Delta\fib\Gamma\) has a path fibration \(P_\Gamma\Delta\).
            \item[\emph{F.}]
            \emph{(factorisation)} every map \(f:B\to A\) has a factorisation \(B\tto Lf\fib A\).
            \item[\emph{T.}]
            \emph{(transport)} for any \(p:A\disp\Gamma\) and path object \(P\Gamma\) of \(\Gamma\) there exists a map \(\tau:Lp\to A\) for \(Lp\coloneqq A\mathbin{_p\times_s}P\Gamma\) such that \(p\tau=t\pi_1\) and \(\tau(1,rp)\simeq_\Gamma1\).
            \item[\emph{L.}]
            \emph{(lifting)} if we have a commutative outer square \[\hfill\begin{tikzcd}[column sep = large, row sep=1.5em]
                B \ar[r, "f"] \ar[d, "w"', equiv] \ar[rd, phantom, "\simeq_\Gamma"{near start}] & A \ar[d, "p"{near start}, fib] \\
                \Delta \ar[r, "\sigma"'] \ar[ru, "l"'{inner sep=0pt}, dashed] & \Gamma\mathrlap,
            \end{tikzcd}\hfill\] then there exists a map \(l:\Delta\to A\) such that the lower triangle commutes and the upper triangle commutes up to fibrewise homotopy, that is, we have \(pl=\sigma\) and \(lw\simeq_\Gamma f\).
            Moreover, such a lifting is unique up to fibrewise homotopy.
        \end{description}
    \end{theorem}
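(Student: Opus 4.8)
The plan is to establish the five conditions as equivalent by closing a cycle of implications, for instance \(\textbf{PF}\Rightarrow\textbf{PO}\Rightarrow\textbf{F}\Rightarrow\textbf{PF}\) to identify the three ``global'' conditions, together with \(\textbf{F}\Rightarrow\textbf{L}\Rightarrow\textbf{T}\Rightarrow\textbf{PO}\) to fold in lifting and transport. All of these except the last are light rephrasings of path-category arguments already recalled. \(\textbf{PF}\Rightarrow\textbf{PO}\) is immediate: the category is rooted, so \(\Gamma\fib 1\) for every \(\Gamma\), and a path fibration for this fibration is a path object \(P\Gamma\). For \(\textbf{PO}\Rightarrow\textbf{F}\) I would re-run the proof of \autoref{prop:factorization} verbatim: given \(f:B\to A\), form \(Lf\coloneqq B\mathbin{_f\times_s}PA\) with \(PA\) the path object supplied by \(\textbf{PO}\); the projection \(\pi_0:Lf\to B\) is a pullback of the trivial fibration \(s:PA\tfib A\), so \(w_f\coloneqq(1_B,rf)\) is an equivalence by 2-out-of-3, while \(p_f\coloneqq t\pi_1\) is a fibration because \((\pi_0,t\pi_1):Lf\to B\times A\) is a pullback of the fibration \((s,t):PA\fib A\times A\). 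For \(\textbf{F}\Rightarrow\textbf{PF}\), factor the fibrewise diagonal \(\delta_q:\Delta\to\Delta\times_\Gamma\Delta\) of a fibration \(q:\Delta\fib\Gamma\) as \(\Delta\tto L\delta_q\fib\Delta\times_\Gamma\Delta\); equipped with the evident reflexivity this is a path fibration \(P_\Gamma\Delta\). Finally \(\textbf{F}\Rightarrow\textbf{L}\) follows by re-running the proof of \autoref{the:lifting}, whose path-category argument uses only factorisation and axioms \textbf{1}--\textbf{4}.

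For \(\textbf{L}\Rightarrow\textbf{T}\): given \(p:A\disp\Gamma\) and a path object \(P\Gamma\), set \(Lp\coloneqq A\mathbin{_p\times_s}P\Gamma\) and \(w\coloneqq(1,rp):A\to Lp\). As above \(\pi_0:Lp\to A\) is a pullback of the trivial fibration \(s:P\Gamma\tfib\Gamma\), hence trivial, so \(w\) is an equivalence, and \(t\pi_1:Lp\to\Gamma\) is a fibration. The square with edges \(1_A\) on top, \(t\pi_1\) on the bottom, \(w\) on the left and \(p\) on the right commutes strictly; applying \(\textbf{L}\) produces \(\tau:Lp\to A\) with \(p\tau=t\pi_1\) and \(\tau w\simeq_\Gamma 1_A\), which is exactly the transport data demanded by \(\textbf{T}\).

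The substantial step is \(\textbf{T}\Rightarrow\textbf{PO}\), for which I would prove the lemma: \emph{if \(\Gamma\) has a path object and \(p:A\disp\Gamma\) is a display map, then the object \(A\) has a path object}. Axiom \textbf{5} supplies the path display map \((s_A,t_A):P_\Gamma A\disp A\times_\Gamma A\), whose first leg \(s_A:P_\Gamma A\tfib A\) is a trivial fibration, and \(\textbf{T}\) supplies \(\tau:Lp\to A\) with \(Lp\coloneqq A\mathbin{_p\times_s}P\Gamma\), \(p\tau=t\pi_1\), and a fibrewise-homotopy witness \(h\) of \(\tau(1,rp)\simeq_\Gamma 1\). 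Take \(PA\) to be the pullback of \(s_A:P_\Gamma A\tfib A\) along \(\tau:Lp\to A\). Then the first projection \(PA\to A\), which factors as \(PA\to Lp\to A\), is a composite of pullbacks of trivial fibrations (of \(s_A\) and of \(s:P\Gamma\tfib\Gamma\)), hence itself a trivial fibration; the target pair \((s,t):PA\to A\times A\) is a fibration once \(PA\) is re-presented as the iterated pullback \(\bigl((A\times A)\times_{\Gamma\times\Gamma}P\Gamma\bigr)\times_{A\times_\Gamma A}P_\Gamma A\) formed along \((a_0,a_1,\gamma)\mapsto(\tau(a_0,\gamma),a_1)\); and the reflexivity \(r:A\to PA\), assembled from \(r_\Gamma\), \(h\), and identity components, is a section of the first projection and so an equivalence by 2-out-of-3, with \((s,t)r=(1,1)\) holding because \(h\) witnesses both identities. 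With the lemma in hand, rootedness exhibits any \(\Gamma\) as the top of a tower of display maps \(1\disp\cdots\disp\Gamma\); the empty context carries the trivial path object, and climbing the tower one display map at a time yields \(P\Gamma\), that is, \(\textbf{PO}\). I expect this lemma to be the main obstacle: verifying that \((s,t):PA\to A\times A\) really is a fibration and that \(r\) really is an equivalence is a somewhat delicate computation with several interlocking pullbacks, and it is exactly here that one needs transport rather than merely path display maps --- the semantic counterpart of the remark after the statement that \(=\)-telescopes cannot be built from \(=\)-types alone.
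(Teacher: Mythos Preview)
Your proposal is correct and matches the paper's approach: the same cycle among \textbf{PO}, \textbf{PF}, \textbf{F}, the same extraction of transport as a lift along \(w_p=(1,rp)\), and the same inductive construction for \(\textbf{T}\Rightarrow\textbf{PO}\) that combines \(P\Gamma\), the path display map \(P_\Gamma A\), and \(\tau\) into a path object for \(A\). Your explicit use of the homotopy witness \(h\) when assembling the reflexivity map is, if anything, slightly more careful than the paper's compressed formula \(r_B=(((1,1),rp),r)\), which the paper justifies by passing to the isomorphic mapping-path-space description \(L_\Gamma\tau\).
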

    \begin{proof} We have the following implications: \begin{itemize}
            \item (\(\text{\graybf{PO}}\to\text{\graybf{PF}}\to\text{\graybf{F}}\to\text{\graybf{PO}}\)) follows by \autoref{prop:factorization} and the fact that the path object \(P\Gamma\) is a factorisation of the diagonal \(\delta_\Gamma:\Gamma\to\Gamma\times\Gamma\).
            \item(\(\text{\graybf{PO}}\to\text{\graybf{L}}\)) by the lifting theorem for path categories (\autoref{the:lifting}).
            \item(\(\text{\graybf{L}}\to\text{\graybf{T}}\)) because a transport map is a lift for the following square: \[\hfill\begin{tikzcd}[column sep = large, row sep = 1.5em]
                B \ar[r, "f"] \ar[d, "w_p"', equiv] \ar[rd, phantom, "\simeq_\Gamma"{near start}] & A \ar[d, "p"{near start}, fib] \\
                Lp \ar[r, "p_p"', fib] \ar[ru, "\tau"'{inner sep=0pt}, dashed] & \Gamma,
            \end{tikzcd}\hfill\]
            \item(\(\text{\graybf{T}}\to\text{\graybf{PO}}\)) follows with induction on the number of display maps in the fibration \(\Gamma\fib1\).
            In the base case we take the path object to be \(1\).
            In the successor case we will combine a path object \(PA\disp^n A\times A\) for \(A\disp^n1\) with a path object \(P_AB\disp B\times_AB\) for \(p:B\disp A\) into a path object \(PB\disp^{n+1}B\times B\) for \(B\disp^{n+1}1\).
            Take \(Lp\coloneqq B\mathbin{_p\times_s}PA\) and let \(\tau:Lp\to A\) be a transport map.
            The intuition for \(PB\) will be that it consists of quadruples \((b,b',\alpha,\beta)\) as in the following picture: \[\hfill\begin{tikzpicture}[scale = 0.4]
                \fill (-1,3) node[left] {\(b\)} circle[radius = 2pt];
                \fill (1,3) node[right] {\(\tau(b,\alpha)\)} circle[radius = 2pt];
                \fill (-1,0) circle[radius = 2pt];
                \fill (1,0) circle[radius = 2pt];
                \fill (1,5) node[right] {\(b'\)} circle[radius = 2pt];
                \draw[densely dotted, decorate, decoration=snake] (-1,3) -- (1,3);
                \draw[decorate, decoration=snake] (-1,0) -- node[below] {\(\alpha\)} (1,0);
                \draw[decorate, decoration=snake] (1,3) -- node[right] {\(\beta\)} (1,5);
                \draw[|->] (-1,2.75) -- node[left] {\(d\)} (-1,0.25);
                \draw[|->] (1,2.75) -- (1,0.25);
                \draw (0,4) ellipse (5 and 2);
                \node at (5.5,4) {\(B\)};
                \draw (0,0) ellipse (5 and 1);
                \node at (5.5,0) {\(A\)};
            \end{tikzpicture}\hfill\]
            More precisely, we define \(PB\) using the two pullbacks: \[\hfill\begin{tikzcd}[column sep = 7em, row sep = small]
                P_AB \ar[d, disp, "{(s,t)}"'] & PB \ar[l, "\pi_1"'] \ar[d, disp, "\pi_0"] \ar[ld, "\llcorner"{anchor=center, pos=0.125}, draw=none] \\
                B\times_AB & (B\times B)\times_{A\times A}PA \ar[l, "{(\tau(\pi_0\pi_0,\pi_1),\pi_1\pi_0)}"] \ar[r, "\pi_1"] \ar[d, disp, "n"{pos=1}, "\pi_0"'] \ar[rd, "\lrcorner"{anchor=center, pos=0.125}, draw=none] & PA \ar[d, disp, "n"{pos=1}, "{(s,t)}"] \\
                & B\times B \ar[r, "{p\times p}"'] & A\times A
            \end{tikzcd}\hfill\] and we take \(r_B\coloneqq(((1,1),rp),r)\) and \((s_B,t_B)\coloneqq\pi_0\pi_0\).
            To see that \(r_B\) is an equivalence, we note that there is a more concise isomorphic definition of \(PB\) as the mapping path space \(L_A\tau\) of \(\tau:Ld\to B\) in \(\cal C_A\): \[
                L_A\tau\coloneqq Ld\mathbin{_\tau\times_s}P_AB=(B\mathbin{_p\times_s}PA)\mathbin{_\tau\times_s}P_AB.
            \]
            We see that the following diagram commutes: \[\hfill\begin{tikzcd}[row sep = 1.8em]
                B \ar[d, "w_d"', equiv] \ar[r,"r_B"] & PB \ar[d, "{(\pi_0\times 1_{P_AB})\times 1_{PA}}"{inner sep=7pt}, iso'] \\
                Ld & L_A\tau\mathrlap, \ar[l, "\pi_0", equiv]
            \end{tikzcd}\hfill\] so, with 2-out-of-3, we see that \(r_B\) is an equivalence.\qedhere
        \end{itemize}
    \end{proof}
    So, the main results for path categories turn out to be equivalent to the existence of path fibrations.
    Another observation is that the slice \(\cal C/\Gamma\) of a (rooted) display path category is generally only a display map path category, whereas \(\cal C/^{\sf{fib}}\Gamma\) is a rooted display map path category.
    So, both the rooted and the unrooted version are important regardless of the starting category, which we exploit in the following:
    \begin{theorem}
        A display map path category has weakly stable =-types, and every display map category with weakly stable =-types can be given a class of weak equivalences that turns it into a display map path category: the fibrewise homotopy equivalences.
        This extends to an isomorphism of the 2-categories: \(\sf{DispPathCat}^{\sf{root}}\cong\sf{DispCat}_{=_{\sf{axi}}}^{\sf{root}}\).
    \end{theorem}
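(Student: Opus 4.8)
The plan is to prove the three assertions in turn, reusing the apparatus already developed for path categories --- \autoref{the:equivalent_axioms}, \autoref{the:saturation}, and the analogue \autoref{prop:pathtodisp} --- and then to exhibit a forgetful and a reconstruction 2-functor witnessing the 2-isomorphism. For the first assertion, let $\cal C$ be a display map path category and $A\disp\Gamma$ a display map. Formation and introduction are read off a path display map: put $\Id_A\coloneqq P_\Gamma A$, a display map over $A\times_\Gamma A=\Gamma.A.A^\liltriangle$ by axiom \textbf{5}, and $\refl_A\coloneqq r$, the identity $(s,t)r=(1,1)$ saying precisely that $\refl_A$ lies over the diagonal. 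Given a term $a$ of $A$, a display map $C$ over $\Gamma.A.\Id_A[a^\liltriangle]$, and a term $d$ of $C[a,\refl[a]]$, all objects involved are fibrations over $\Gamma$, so we may pass to $\cal C/^{\sf{fib}}\Gamma$, which is a rooted display map path category and hence a path category; there, exactly as in \autoref{prop:pathtodisp}, the Lifting Theorem \autoref{the:lifting} produces $\ind^=_d$ together with a fibrewise homotopy $\ind^=_d[a,\refl[a]]\simeq d$, whence $\beta^=_d$. For weak stability, $\Id_A[\sigma^{\liltriangle\liltriangle}]$ is the pullback of the display map $P_\Gamma A$ and $\refl_A[\sigma^\liltriangle]$ the pullback of the section $r$ of the trivial fibration $s\colon P_\Gamma A\tfib A$; by axiom \textbf{3} and closure of display maps under pullback this is again a path display map for $A[\sigma]$, to which the construction above assigns the missing =-type data.

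Conversely, let $\cal C$ be a rooted display map category with weakly stable axiomatic =-types. By \autoref{prop:identityvariations} these are equivalently weakly stable parametrized unbased ones, so, taking the fibrations (compositions of display maps) as a clan structure, $\cal C$ meets the hypotheses of \cite[Theorem 5.16]{MR3828037}, which supplies a canonical class of weak equivalences --- its homotopy equivalences, equivalently, since homotopy is independent of the choice of path objects, its fibrewise homotopy equivalences --- making that clan a path category. Axioms \textbf{1}--\textbf{4}, and condition \textbf{PF} as path objects, are then inherited; the one additional point, axiom \textbf{5}, holds because the =-type $\Id_A$ of a display map $A\disp\Gamma$ is itself a display map, so $P_\Gamma A\coloneqq\Gamma.A.A^\liltriangle.\Id_A$ with $r\coloneqq\refl_A$ is a path display map. (Alternatively \textbf{PF} may be checked directly: based =-types give transport, hence condition \textbf{T} of \autoref{the:equivalent_axioms}, and rootedness lets us run the implications \textbf{T} $\Rightarrow$ \textbf{PO} $\Rightarrow$ \textbf{PF}.) Thus $\cal C$ with this class of equivalences is a rooted display map path category.

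For the 2-isomorphism, let $\Phi\colon\sf{DispPathCat}^{\sf{root}}\to\sf{DispCat}^{\sf{root}}_{=_{\sf{axi}}}$ forget the class of equivalences --- the path-display-map and path-fibration clauses being mere properties --- acting as the identity on 1- and 2-cells; it lands in the stated target by the first assertion, and since any structure-preserving functor preserves path display maps, it weakly preserves the derived =-types. Let $\Psi$ go the other way by adjoining the canonical class of homotopy equivalences produced in the second assertion; as that class is determined by the =-type structure, $\Psi$ is again the identity on cells, and a functor weakly preserving =-types preserves =-telescopes and path objects up to equivalence, hence the homotopy relation, hence homotopy equivalences, so it is a legitimate 1-cell of $\sf{DispPathCat}^{\sf{root}}$. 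Then $\Phi\Psi=\id$ because one adjoins and immediately discards exactly the class of equivalences while the =-type property is untouched, and $\Psi\Phi=\id$ precisely when the weak equivalences of a rooted display map path category coincide with its homotopy equivalences; since such a category is in particular a path category, and its homotopy relation --- computed via path display maps and path fibrations --- agrees with that of the underlying path category, this is exactly Saturation (\autoref{the:saturation}).

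The main obstacle lies in two linked spots. To apply \cite[Theorem 5.16]{MR3828037} one must genuinely certify that the underlying clan of $\cal C$ carries weakly stable parametrized =-types; the subtlety is that, in the absence of extensional $\bb1$- and $\Sigma$-types, $\cal C$ need not be literally a clan, so one works with the clan its fibrations generate while retaining the finer display-map structure, feeding in the translation of \autoref{prop:identityvariations}. And one must show that mere weak preservation of =-types already forces preservation of the homotopy relation --- hence of trivial fibrations and equivalences --- which is what makes $\Psi$ a 2-functor; the closing identity $\Psi\Phi=\id$ is then the genuine coherence statement, and it is here that \autoref{the:saturation} is indispensable. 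All remaining checks are routine transport of =-type data along equivalences, with tools already at hand.
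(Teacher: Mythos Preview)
Your proof is essentially correct and closely parallels the paper's argument; the forward direction (via slicing to $\cal C/^{\sf{fib}}\Gamma$ and lifting) and the 2-isomorphism (via saturation to pin down the equivalences uniquely) match the paper almost exactly.

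There are two points where you diverge from the paper, one a restriction and one a subtle gap. First, in the converse you assume $\cal C$ is \emph{rooted} and apply Van den Berg's Theorem~5.16 \emph{globally} to the clan generated by the fibrations; the paper's second assertion is stated for \emph{arbitrary} display map categories with weakly stable $=$-types, and the paper handles this by defining an equivalence to be a map that is a fibrewise homotopy equivalence over \emph{some} $\Gamma$, then verifying axioms $\textbf{4}$ and $\textbf{PF}$ \emph{locally} in the slice $\cal C/^{\sf{fib}}\Gamma$ (which always has a root, regardless of whether $\cal C$ does). Your rooted argument suffices for the 2-isomorphism, but it does not establish the theorem's middle clause at full generality.

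Second, your global appeal to \cite[Theorem 5.16]{MR3828037} needs one more ingredient: that theorem, as used in \autoref{prop:disptopath}, takes a \emph{clan} with weakly stable parametrized $=$-types for its fibrations, but your $\cal C$ only supplies $=$-types for \emph{display maps}. You must first promote these to $=$-types for arbitrary fibrations (i.e., build $=$-telescopes), and your parenthetical route via $\textbf{T}\Rightarrow\textbf{PO}\Rightarrow\textbf{PF}$ in \autoref{the:equivalent_axioms} is circular as written, since that theorem already presupposes axioms $\textbf{1}$--$\textbf{5}$ and hence a class of equivalences. The paper sidesteps this by invoking \cite[Section 6]{MR3828037} in each slice rather than Theorem 5.16 globally; equivalently, one can construct $=$-telescopes directly by induction using transport (which \emph{is} available from based $=$-types alone), and then check by hand that the resulting $r$ is a homotopy equivalence. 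This is routine but should be acknowledged.
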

    \begin{proof}
        If we have a display map path category, then for every \(A\disp\Gamma\) we have a path object \(P_\Gamma A\disp A\times_\Gamma A\) in the rooted display path category \(\cal C/^{\sf{fib}}\Gamma\).
        This is in particular a path category, so by \autoref{prop:pathtodisp} it is a =-type in \(\cal C/^{\sf{fib}}\Gamma\), which implies that it is a =-type in \(\cal C\).
        The =-types are weakly stable because we see that display path objects are preserved by pullback using (the proof of) Brown's lemma \cite[page 428]{Brown1973}.

        Conversely, if we have a display map category with weakly stable axiomatic \(=\)-types, then we say that a map \(f:A\to B\) is an equivalence if there exists a \(\Gamma\) such that \(A\fib\Gamma\) and \(B\fib\Gamma\) and \(f\) is a homotopy equivalence over \(\Gamma\).
        We see that this satisfies axioms \graybf{1--4} so the main difficulty is showing that every trivial fibration has a section and that every fibration has a path fibration.
        For a (trivial) fibration \(f:\Delta\tfib\Omega\) there exists a \(\Gamma\) such that \(f\) is an equivalence over \(\Gamma\).
        So, we consider \(\cal C/^{\sf{fib}}\Gamma\), which has a root, and Van den Berg \cite[Section 6]{MR3828037} shows that parametrized unbased axiomatic \(=\)-types are make it a path category whose equivalences are again the homotopy equivalences.
        So, the remaining properties follow.

        If there is a root, then we know that saturation holds (\autoref{the:saturation}), and the fibrewise homotopy equivalences become the homotopy equivalences, so this is the only option for the weak equivalences.
        In a similar way as for \autoref{the:pathasdisp} we see that the 1-cells agree.
    \end{proof}

    \begin{proposition}
        A (display map) path category: \begin{itemize}
            \item has weakly stable axiomatic \(\bb1\)-types iff the identity maps are homotopic to display maps;
            \item has weakly stable axiomatic \(\Sigma\)-types iff the display maps are closed under composition up to homotopy;
            \item has weakly stable axiomatic \(\Pi\)-types iff for every $B\disp A\disp\Gamma$ there exist $\Pi_AB\disp\Gamma$ and \(\sf{app}_{A,B}:(\Pi_AB)\times_\Gamma A\to B\) over \(\Gamma\) such that for every \(\sigma:\Delta\to\Gamma\) the map \[
                (\mathcal{C}/\Gamma)(\Delta,\Pi_AB)\xrightarrow{-\times_\Gamma A}(\cal C/A)(\Delta\times_\Gamma A,(\Pi_AB)\times_\Gamma A)\xrightarrow{\sf{app}\circ -}(\mathcal{C}/A)(\Delta\times_\Gamma A,B),
            \] is essentially surjective.
            Here we view the domain and codomain as groupoids, where the maps are given by pointwise homotopies up to higher homotopy.
            \(\Pi_AB\) satisfies the function extensionality axiom iff the map is full, which implies that it is an equivalence.
        \end{itemize}
    \end{proposition}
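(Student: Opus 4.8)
The plan is to prove each of the three biconditionals by the same two moves and then dispatch weak stability uniformly. In the forward direction I would extract the stated closure property from the weakly stable type former, using its elimination rule together with the $\beta$-axiom to manufacture the required homotopy equivalences (resp.\ essential surjectivity); in the backward direction I would rebuild the type former from the closure property by the lifting theorem --- \autoref{the:lifting}, equivalently clause \graybf{L} of \autoref{the:equivalent_axioms} --- which is always available after passing to the slice $\mathcal{C}/^{\sf{fib}}\Gamma$, a rooted display map path category. Weak stability then reduces each time to two observations: pullbacks of display maps are display maps, and reindexing along $\sigma : \Delta \to \Gamma$ induces a path-category functor $\mathcal{C}/^{\sf{fib}}\Gamma \to \mathcal{C}/^{\sf{fib}}\Delta$, which by Brown's lemma preserves equivalences; together with the fact that a ``contractible'' display map (resp.\ a display map representing a given groupoid-valued functor up to homotopy) is unique up to equivalence over its base.

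For $\bb1$-types: if $p_\Gamma : \Gamma.\bb1 \disp \Gamma$ with section $*$ is a weakly stable axiomatic $\bb1$-type, I would apply $\bb1$-elimination to the fibrewise path display map of $\Gamma.\bb1$ along $(*p_\Gamma, 1)$ and read the $\beta$-axiom as a fibrewise homotopy $*p_\Gamma \simeq_\Gamma 1$, so that $p_\Gamma$ is a homotopy equivalence, hence an equivalence by saturation (\autoref{the:saturation}) --- i.e.\ $1_\Gamma$ is homotopic to the display map $p_\Gamma$. Conversely, a display map $p : X \disp \Gamma$ that is an equivalence is a trivial fibration, so it has a section $*$ (clause \graybf{4}), which is an equivalence by $2$-out-of-$3$; the eliminator for a type $C$ over $X$ with term $d$ over $*$ is then the lift of $d$ along $C\disp X$ against $*$, and its $\beta$-axiom is the accompanying fibrewise homotopy. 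For weak stability, the pullback of $p$ along $\sigma$ is again a display map and a trivial fibration (clause \graybf{3}), and any two trivial display maps over $\Delta$ are equivalent over $\Delta$.

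The $\Sigma$-case has the same shape. The pairing $\pair : \Gamma.A.B \to \Gamma.\Sigma_A B$ of a weakly stable axiomatic $\Sigma$-type should be exhibited as an equivalence over $\Gamma$, with inverse assembled from the $\Sigma$-projections (definable by $\Sigma$-elimination) and the $\beta$-axiom supplying the fibrewise homotopies --- so the composite display map $B \disp A \disp \Gamma$ is homotopic over $\Gamma$ to the display map $\Sigma_A B \disp \Gamma$; conversely, from a display map $Z \disp \Gamma$ with an equivalence $e : \Gamma.A.B \tto Z$ over $\Gamma$ I would set $\Gamma.\Sigma_A B := Z$, $\pair := e$ and obtain the eliminator and $\beta$-axiom by lifting along display maps against $e$. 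Weak stability is once more the preservation of $e$ by the reindexing path-category functor.

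The $\Pi$-case is the crux, and the hard part is matching the display-map-categorical notion of a weakly stable axiomatic $\Pi$-type with the stated condition and analysing the function extensionality clause. Unwinding definitions, a term of $(\Pi_A B)[\sigma]$ over $\Delta$ is precisely a section of $B$ over $\Delta \times_\Gamma A$, naturally in $\sigma : \Delta \to \Gamma$; under this dictionary the composite functor in the statement is ``apply a global function to the generic argument'', the $\Pi$-$\beta$-axiom exhibits $\lambda$-abstraction as a homotopy section of it, and essential surjectivity for all $\sigma$ becomes exactly $\Pi$-introduction together with the $\Pi$-$\beta$-axiom --- so one direction reads $\Pi_A B$, $\sf{app}$, $\lambda$ and $\beta$ off the data and the other rebuilds them, while weak stability follows because $(\Pi_A B)[\sigma]$ has the reindexed essential-surjectivity property that characterises $\Pi_{A[\sigma]}(B[\sigma^\liltriangle])$, and a homotopy-universal object is unique up to equivalence. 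For the last clause, since $\lambda$-abstraction is already a homotopy section of the currying functor, that functor is always split essentially surjective; the function extensionality axiom is the propositional $\eta$-rule $\lambda(\sf{app}(t\times_\Gamma A))\simeq t$, which says the section is also a retraction, and unwinding this equivalence of hom-$\infty$-groupoids shows it is equivalent to fullness --- fullness transports the canonical homotopy $\sf{app}((\lambda\,\sf{app}(t\times_\Gamma A))\times_\Gamma A)\simeq\sf{app}(t\times_\Gamma A)$ back to a homotopy $\lambda\,\sf{app}(t\times_\Gamma A)\simeq t$, naturally up to higher homotopy, which together with the homotopy section makes the currying functor a homotopy equivalence. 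I expect the genuinely delicate point to be the coherence of all these homotopies, which is exactly why the hom-sets must be handled as $\infty$-groupoids rather than mere $1$-groupoids.
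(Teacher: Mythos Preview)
Your approach matches the paper's: in both directions you use that a $\bb1$-type (resp.\ $\Sigma$-type) amounts to a display map equipped with an equivalence to the identity (resp.\ to the composite), with elimination and the $\beta$-axiom recovered from lifting, and weak stability from the fact that display maps and equivalences pull back. The paper's proof is terser --- it simply says ``the introduction and $\beta$-axiom give equivalences $\Gamma.\bb1\tdisp\Gamma$ and $\Gamma.\Sigma_AB\tto\Gamma.A.B$ over $\Gamma$'' and ``display maps and equivalences are preserved by re-indexing'' --- but your explicit invocation of \autoref{the:lifting} and of the pullback functor $\cal C/^{\sf{fib}}\Gamma\to\cal C/^{\sf{fib}}\Delta$ is exactly what underlies those sentences.

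For the $\Pi$-clause there is one discrepancy worth flagging. The paper does not argue the function-extensionality equivalence directly: it identifies the essential-surjectivity condition with introduction plus the $\beta$-axiom (with weak stability built in, since the condition is quantified over all $\sigma:\Delta\to\Gamma$), and then \emph{cites} Den Besten \cite[Proposition 5.4]{den_besten_homotopy_2020} for the claim that fullness is equivalent to function extensionality and already forces the map to be an equivalence. Your sketch of ``full $\Rightarrow$ equivalence'' via lifting the $\beta$-homotopy to a propositional $\eta$ is correct, but you identify function extensionality \emph{with} the propositional $\eta$-axiom, whereas the paper's definition (see the appendix) is that the canonical map $(f=f')\to\Pi_{x:A}(f\,x=f'\,x)$ is an equivalence. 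These formulations are equivalent, but that equivalence is part of what Den Besten's argument supplies rather than something you can take as definitional; if you want a self-contained proof you should spell out that step.
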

    \begin{proof}
        For the \(\bb1\)- and \(\Sigma\)-types, the introduction and \(\beta\)-axiom give equivalences \(\Gamma.\bb1\tdisp\Gamma\) and \(\Gamma.\Sigma_A.B\tto\Gamma.A.B\) over \(\Gamma\).
        So, the identity and composition are homotopic to display maps.
        Conversely, we can define \(\bb1_\Gamma\) as the display map homotopic to \(1_\Gamma:\Gamma\to\Gamma\) and \(\Sigma_AB\) as the display map homotopic to \(B\disp A\disp\Gamma\).
        They are always weakly stable because display maps and equivalences are preserved by re-indexing.

        For the \(\Pi\)-types, these correspond to the weak/strong homotopy exponentials of Den Besten \cite{den_besten_homotopy_2020}.
        Essentially surjective gives a form of the introduction and \(\beta\)-axiom with weak stability build in: if we have a substitution \(\sigma:\Delta\to\Gamma\) and a term \(b:\Delta.A[\sigma]\to\Gamma.B\) over \(\sigma p_{A[\sigma]}\) then we get a term \(\lambda b:\Delta\to\Gamma.\Pi_AB\) over \(\sigma\) such that \(\sf{app}(\lambda b,1_{\Gamma.A})\) is homotopic to \(b\).
        That the map being full implies that it is an equivalence, and that this is equivalent the axiom of function extensionality is shown by Den Besten \cite[Proposition 5.4]{den_besten_homotopy_2020}.
    \end{proof}
    
    \section{Strictification and coherence}\label{sec:strictify}

    In this last section, we use our isomorphisms to prove a coherence result: a biequivalence between a certain class of path categories and the class of (strict) models of a specific theory.
    First, we recall the LF condition that Lumsdaine and Warren \cite{MR3372323} use to strictify:

    \begin{definition}[Logical framework (LF)]
        A display map category satisfies \emph{LF} if it has finite products and if, whenever \(A\to\Gamma\) is a display map and \(B\to A\) is either a display map or a product projection, the categorical dependent exponent \(\Pi_AB\) exists: there exist $\Pi_AB\to\Gamma$ and \(\sf{app}_{A,B}:(\Pi_AB)\times_\Gamma A\to B\) over \(\Gamma\) such that for every \(\sigma:\Delta\to\Gamma\) the map: \[
            (\mathcal{C}/\Gamma)(\Delta,\Pi_AB)\xrightarrow{-\times_\Gamma A}(\cal C/A)(\Delta\times_\Gamma A,(\Pi_AB)\times_\Gamma A)\xrightarrow{\sf{app}\circ -}(\mathcal{C}/A)(\Delta\times_\Gamma A,B)\tag{LF}\label{def:LF}
        \] is an isomorphism.
    \end{definition}
    This condition is very close to the existence of \(\Pi\)-types for (display map) path categories but distinct in two important ways: we do not ask for \(\Pi_AB\to\Gamma\) to be a display map (so \(\Pi_AB\) is not a type), and we ask for an isomorphism instead of a full map or equivalence.
    As a consequence, it captures a situation that is better viewed as a logical framework \cite{harper_framework_1993} or a 2-level type theory \cite{MAIETTI2009319, altenkirch_extending_2016} where the ambient theory has strong \(\Pi\)-types regardless of the internal theory that we consider.

    \begin{restatable}[Path categories provide models]{theorem}{structuredpathtomodel}\label{the:structuredpathtomodel}
        Let $\mathcal{C}$ be a structured path category satisfying the LF condition.
        Then, seen as a structured display map category, it is equivalent in $\sf{DispCat}^{\sf{structured,LF,root}}_{=_{\sf{axi}},\bb1_{\sf{ext}},\Sigma_{\sf{ext}}}$ to an object \(\cal C_!\) of $\sf{DispCat}^{\sf{strict,LF,root}}_{=_{\sf{axi}},\bb1_{\sf{ext}},\Sigma_{\sf{ext}}}$ (a strict model).\footnote{Note that if we are willing to assume the axiom of choice in the metatheory, we do not need to assume that the display map category is structured.}
    \end{restatable}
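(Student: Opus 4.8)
The plan is to feed the structured display map category underlying $\mathcal{C}$ into the left-adjoint, or \emph{local universes}, splitting of Lumsdaine and Warren \cite{MR3372323}, in the refined form due to Bocquet \cite{MR4481908}, and then to check that every piece of relevant structure survives. First I would translate the hypotheses into display-map language: by (the structured refinement of) \autoref{the:pathasdisp}, the structured display map category underlying $\mathcal{C}$ is rooted, carries weakly stable axiomatic based $=$-types, and carries pseudo-stable extensional $\bb1$- and $\Sigma$-types (\autoref{prop:clanasdisp}), and by assumption it satisfies the \ref{def:LF} condition. It is worth recording \emph{why} the left-adjoint splitting is the right tool here: Hofmann's right-adjoint splitting \cite{hofmannlccc} would already strictify the pseudo-stable $\bb1$- and $\Sigma$-types, but the $=$-types are in general only weakly stable, and it is precisely weakly stable structure that the local universes construction can strictify --- at the cost of requiring a cleavage (supplied by ``structured'', and replaceable by the axiom of choice as in the footnote) and the \ref{def:LF} condition.

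Next I would apply the construction. The local universes model $\mathcal{C}_!$ has the same underlying category as $\mathcal{C}$; a strict type over $\Gamma$ is a \emph{local universe} --- a display map $V'\disp V$ of $\mathcal{C}$ together with a classifying map $\Gamma\to V$ --- and re-indexing of strict types is precomposition in the classifying map, hence strictly functorial, so $\mathcal{C}_!$ is split. There is a comparison morphism $\mathcal{C}_!\to\mathcal{C}$ that is the identity on contexts and sends a local universe to the display map obtained by pulling $V'\disp V$ back along $\Gamma\to V$; it preserves display maps, pullbacks, and the terminal object. Because $\mathcal{C}$ satisfies \ref{def:LF}, the transfer theorems of Lumsdaine--Warren and Bocquet lift the weakly stable axiomatic $=$-types of $\mathcal{C}$ to \emph{strictly} stable ones on $\mathcal{C}_!$ --- path objects of $\mathcal{C}$ provide the formation and introduction data (as in \autoref{prop:pathtodisp}), lifting (\autoref{the:lifting}) provides elimination and the $\beta$-axiom, and strict stability is now automatic since substitution is precomposition --- and likewise lift the extensional $\bb1$- and $\Sigma$-types to strict extensional ones; the comparison morphism weakly preserves all of this structure.

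It then remains to verify the side conditions and the equivalence. One checks that $\mathcal{C}_!$ is still rooted (the terminal object is unchanged, and a fibration $\Gamma\fib 1$ of $\mathcal{C}$ is realized in $\mathcal{C}_!$ by the evident tower of local universes) and still satisfies \ref{def:LF} (finite products live in the common underlying category, and for a $\mathcal{C}_!$-display map $A$ and a display map or projection $B$ over it --- which are in particular a display map and a display map/projection of $\mathcal{C}$ --- the exponent $\Pi_A B\to\Gamma$ exists by \ref{def:LF} for $\mathcal{C}$). Since strict implies structured, $\mathcal{C}_!$ is an object of $\sf{DispCat}^{\sf{structured,LF,root}}_{=_{\sf{axi}},\bb1_{\sf{ext}},\Sigma_{\sf{ext}}}$, and the comparison $\mathcal{C}_!\to\mathcal{C}$ is an equivalence there: it is the identity on contexts and essentially surjective on types, since every display map $\Gamma.A\disp\Gamma$ of $\mathcal{C}$ is the realization of the tautological local universe $(\Gamma.A\disp\Gamma,\,1_\Gamma)$, so sending each display map to this tautological presentation --- using the cleavage of $\mathcal{C}$ to make the assignment a morphism of structured display map categories --- gives a pseudo-inverse, with canonical isomorphisms as the invertible $2$-cells. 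Taking $\mathcal{C}_!$ to be this split display map category completes the argument.

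The main obstacle is essentially bookkeeping: making sure the Lumsdaine--Warren/Bocquet machinery, usually packaged for a single omnibus notion of weakly stable structure, genuinely applies to our exact bundle (rooted, weakly stable axiomatic based $=$-types, pseudo-stable extensional $\bb1$- and $\Sigma$-types, \ref{def:LF}); that \ref{def:LF} and rootedness are themselves preserved, not merely the type formers; and that \emph{extensionality} of the $\bb1$- and $\Sigma$-types --- a condition stable under the equivalences used in the transfer --- is not lost along the way. A secondary delicate point is reconciling the two meanings of ``structured'' that occur: the chosen cleavage of $\mathcal{C}$ as a display map category versus the automatically split cleavage of $\mathcal{C}_!$; this reconciliation is exactly what lets the pseudo-inverse be defined without invoking choice.
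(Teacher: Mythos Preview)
Your proposal is correct and follows the same overall strategy as the paper: translate the path-category hypotheses into weakly stable type formers on a structured display map category via \autoref{the:pathasdisp}, then invoke the Lumsdaine--Warren/Bocquet left-adjoint splitting. The organisational differences are minor but worth noting. First, the paper routes the argument explicitly through \emph{comprehension categories} (via \cite{Ahrens_Lumsdaine_North_2025}), since that is the framework in which \cite{MR3372323,MR4481908} are stated, whereas you stay in display-map language throughout. Second, the paper applies the splitting in \emph{two} stages: Lumsdaine--Warren's \cite[Theorem~3.4.1]{MR3372323} first produces a split comprehension category with stable extensional $\bb1$- and $\Sigma$-types but still only \emph{weakly} stable axiomatic $=$-types, and then Bocquet's \cite[Proposition~25 and Theorem~1]{MR4481908} is applied \emph{again} to this already-split object to strictify the $=$-types; you fold these into a single application ``in the refined form due to Bocquet'', which is conceptually fine but slightly obscures which cited result does which work. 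Third, the paper spends a short preliminary paragraph explaining how to extend the resulting type formers from the strict display maps to their repletion (the general display maps of $\cal C_!$), a step you do not make explicit. Your treatment of the equivalence $\cal C_!\simeq\cal C$ via the tautological local universe is more detailed than the paper's, which simply records that the unit of the left-adjoint splitting is an equivalence and defers this to the proof of \autoref{cor:biequivalencepathcategoriesandmodels}.
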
\noindent
    This follows by combining \autoref{the:pathasdisp} with the work of Lumsdaine and Warren \cite{MR3372323}, and Bocquet \cite{MR4481908}.
    As they work in different frameworks, we provide the details in \autoref{app:framework}.

    \begin{corollary}[Biequivalence of path categories and axiomatic Martin-L\"of type theories]\label{cor:biequivalencepathcategoriesandmodels}
        There is a biequivalence $\sf{PathCat}^{\sf{structured,LF}} \simeq \sf{DispCat}^{\sf{strict,LF,root}}_{=_{\sf{axi}},\bb1_{\sf{ext}},\Sigma_{\sf{ext}}}$.
    \end{corollary}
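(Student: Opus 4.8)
The plan is to obtain the corollary by composing the \(2\)-isomorphism of \autoref{the:pathasdisp} with the strictification of \autoref{the:structuredpathtomodel}, using that strict models form a full sub-\(2\)-category of the structured ones. First I would restrict the isomorphism \(\sf{PathCat}\cong\sf{DispCat}^{\sf{root}}_{=_{\sf{axi}},\bb1_{\sf{ext}},\Sigma_{\sf{ext}}}\) of \autoref{the:pathasdisp} to the structured objects satisfying the \ref{def:LF} condition. Since this isomorphism identifies a path category with the display map category having the same underlying category and the same maps (namely the fibrations) as display maps, a choice of structure on a path category --- strict fibrations, path fibrations, pullbacks, terminal object, supplemented where needed by the lifts furnished by \autoref{the:lifting} for the elimination and \(\beta\)-data --- is the same datum as a choice of structure making the corresponding display map category a structured model with chosen axiomatic \(=\)-types; and the \ref{def:LF} condition, being a property of the underlying category together with its display maps, product projections, and dependent exponents, transports verbatim. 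A \(2\)-isomorphism restricts to an isomorphism between any pair of corresponding full sub-\(2\)-categories, so this step yields
\[
    \sf{PathCat}^{\sf{structured,LF}}\;\cong\;\sf{DispCat}^{\sf{structured,LF,root}}_{=_{\sf{axi}},\bb1_{\sf{ext}},\Sigma_{\sf{ext}}}.
\]

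Next I would note that \(\sf{DispCat}^{\sf{strict,LF,root}}_{=_{\sf{axi}},\bb1_{\sf{ext}},\Sigma_{\sf{ext}}}\) is exactly the full sub-\(2\)-category of \(\sf{DispCat}^{\sf{structured,LF,root}}_{=_{\sf{axi}},\bb1_{\sf{ext}},\Sigma_{\sf{ext}}}\) spanned by the strict models: a strict display map category with strictly stable \(=\)-types is in particular a structured one, and between two such objects the \(1\)-cells and \(2\)-cells --- which only weakly preserve the structure on either side --- coincide. Hence the inclusion \(2\)-functor \(\iota\) is an isomorphism on each hom-category, in particular a local equivalence; and by \autoref{the:structuredpathtomodel} every object \(\cal C\) of the structured \(2\)-category is equivalent, inside that same \(2\)-category, to the strict model \(\cal C_!\) produced by left-adjoint splitting, so \(\iota\) is essentially surjective on \(0\)-cells. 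A \(2\)-functor that is a local equivalence and essentially surjective on \(0\)-cells is a biequivalence; since \(2\)-isomorphisms are biequivalences and biequivalences compose, composing \(\iota\) with the isomorphism of the first paragraph gives the desired biequivalence \(\sf{PathCat}^{\sf{structured,LF}}\simeq\sf{DispCat}^{\sf{strict,LF,root}}_{=_{\sf{axi}},\bb1_{\sf{ext}},\Sigma_{\sf{ext}}}\).

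I expect the only genuinely delicate point to be the bookkeeping in the first paragraph: checking that ``structured path category'', suitably understood, corresponds precisely to ``structured display map category with chosen axiomatic \(=\)-types'' under \autoref{the:pathasdisp}, and that \ref{def:LF} is literally the same condition on both sides so that nothing is lost under the restriction to sub-\(2\)-categories. All the substantive content --- the actual passage to a split model with strictly stable structure --- is already contained in \autoref{the:structuredpathtomodel}, which rests in turn on the left-adjoint splitting constructions of Lumsdaine and Warren and of Bocquet, so no new homotopical work is needed here.
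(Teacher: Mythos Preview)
Your proposal is correct and follows essentially the same approach as the paper: restrict the \(2\)-isomorphism of \autoref{the:pathasdisp} to the structured/LF case, then use \autoref{the:structuredpathtomodel} to show that the inclusion of strict models into structured ones is a biequivalence. The only cosmetic difference is that the paper packages the second step by exhibiting the explicit pseudo-inverse \((-)_!\) together with the natural unit \(\eta_{\cal C}:\cal C\tto\cal C_!\) of the left-adjoint splitting (and noting that \(\eta_{\cal D}\) lives in the strict \(2\)-category when \(\cal D\) is already strict), whereas you invoke the abstract criterion ``locally an equivalence and essentially surjective on \(0\)-cells implies biequivalence''; these are interchangeable formulations of the same argument.
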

    \begin{proof}
        There is a choice of an equivalence $\eta_\mathcal{C}:\mathcal{C}\tto\mathcal{C}_!$ in $\sf{DispCat}^{\sf{structured,LF,root}}_{=_{\sf{axi}},\bb1_{\sf{ext}},\Sigma_{\sf{ext}}}$, natural in $\mathcal{C}$: the unit of the left-adjoint splitting.
        Now, if $\mathcal{D}$ is an object of $\sf{DispCat}^{\sf{strict,LF,root}}_{=_{\sf{axi}},\bb1_{\sf{ext}},\Sigma_{\sf{ext}}}$ then $\eta_{\mathcal{D}}$ --- where $\mathcal{D}$ is included into $\sf{DispCat}^{\sf{structured,LF,root}}_{=_{\sf{axi}},\bb1_{\sf{ext}},\Sigma_{\sf{ext}}}$ --- witnesses an equivalence $\mathcal{D}_! \simeq \mathcal{D}$ living in $\sf{DispCat}^{\sf{strict,LF,root}}_{=_{\sf{axi}},\bb1_{\sf{ext}},\Sigma_{\sf{ext}}}$.
        Therefore there is a biequivalence which, pre-composed by the restriction to the structured case of the biequivalence at \autoref{the:pathasdisp}, yields the statement.
    \end{proof}

    \begin{restatable}[Coherence for path categories]{corollary}{pathcatcoherence}\label{col:pathcatcoherence}
        Every structured LF path category is equivalent to a strict LF path category.     
    \end{restatable}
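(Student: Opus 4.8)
The plan is to thread the statement through the two central results already in place: the isomorphism of $2$-categories of \autoref{the:pathasdisp} and the splitting theorem \autoref{the:structuredpathtomodel} (equivalently, the biequivalence of \autoref{cor:biequivalencepathcategoriesandmodels}), so that no genuinely new construction is required. The key preliminary observation is that the \ref{def:LF} condition is a property of the underlying category together with its class of display maps — it only asks for the \emph{existence} of the relevant categorical dependent exponents — and is therefore transported verbatim along the passage between path categories and display map categories in either direction; the same holds for rootedness. Hence both properties survive whenever we cross the correspondence of \autoref{the:pathasdisp}.

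Concretely I would proceed in three steps. \textbf{Step 1.} Given a structured LF path category $\cal C$, apply the structured refinement of \autoref{the:pathasdisp} to view $\cal C$ as an object $\cal C^\flat$ of $\sf{DispCat}^{\sf{structured,LF,root}}_{=_{\sf{axi}},\bb1_{\sf{ext}},\Sigma_{\sf{ext}}}$: the underlying category, the chosen (strict) display maps, the chosen $\Id$-types obtained from the chosen path fibrations, the chosen $\bb1$- and $\Sigma$-structure obtained from identities and composites, and the chosen root all match up. \textbf{Step 2.} Apply \autoref{the:structuredpathtomodel} to get a strict model $\cal C^\flat_! \in \sf{DispCat}^{\sf{strict,LF,root}}_{=_{\sf{axi}},\bb1_{\sf{ext}},\Sigma_{\sf{ext}}}$ together with an equivalence $\cal C^\flat \tto \cal C^\flat_!$ in $\sf{DispCat}^{\sf{structured,LF,root}}_{=_{\sf{axi}},\bb1_{\sf{ext}},\Sigma_{\sf{ext}}}$ — concretely the unit of the left-adjoint splitting recalled in the proof of \autoref{cor:biequivalencepathcategoriesandmodels}. \textbf{Step 3.} Transport this equivalence back along the structured isomorphism of \autoref{the:pathasdisp}; since an isomorphism of $2$-categories preserves equivalences, and since that isomorphism also identifies \emph{strict} display map categories (with strictly stable type formers) with \emph{strict} path categories, the image of $\cal C^\flat_!$ is a strict path category $\cal C'$ — still LF and rooted by the preliminary observation — and the transported $1$-cell exhibits an equivalence $\cal C \tto \cal C'$ in $\sf{PathCat}^{\sf{structured,LF}}$. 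This is exactly the claim.

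I expect the only step requiring real care to be the compatibility of \autoref{the:pathasdisp} with the structured and strict layers invoked in Steps 1 and 3: one must check that a choice of strict fibrations, path fibrations, pullbacks of strict fibrations along arbitrary maps, and root on the path-category side corresponds bijectively to a choice of strict display maps, $\Id$-types, $\bb1$- and $\Sigma$-types, and root on the display-map side, and that the strictness conditions — strictly respecting identity, composition and path objects versus strict stability of the type formers — translate to one another on the nose. This is precisely the structured/strict version of \autoref{the:pathasdisp} whose verification belongs in \autoref{app:iso}; once it is available, the corollary is immediate, and indeed most of the packaging is already provided by \autoref{cor:biequivalencepathcategoriesandmodels}.
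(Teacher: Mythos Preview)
Your three-step plan is exactly the paper's approach: view $\cal C$ as a structured display map category via \autoref{the:pathasdisp}, apply \autoref{the:structuredpathtomodel} to obtain the strict model $\cal C_!$ with an equivalence $\cal C\tto\cal C_!$, and then regard $\cal C_!$ again as a path category. You also correctly locate the one place where genuine work is required.

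However, the ``structured/strict version of \autoref{the:pathasdisp}'' that you invoke in Step~3 is \emph{not} established in \autoref{app:iso}; the paper supplies that verification \emph{inside} the proof of this very corollary, and its content is more concrete than the abstract translation of strictness conditions you describe. The point is that $\cal C_!$ is a strict display map category whose type formers are defined on \emph{strict} display maps only; to exhibit it as a (strict) path category one must check that the \emph{repletion} of the strict display maps forms a clan. \autoref{prop:clanasdisp} does not apply directly, because the extensional $\bb1$- and $\Sigma$-types of $\cal C_!$ are a priori only available for strict display maps, while clan axioms concern arbitrary ones. The paper therefore verifies by hand that the repleted class contains identities (via $\bb1$), is closed under composition (a short case analysis using $\Sigma$, since every repleted display map factors as an isomorphism followed by a strict display map), is closed under pullback, and contains every map to $1$ (using rootedness plus iterated $\Sigma$). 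Once this is done, \autoref{the:pathasdisp} transports the equivalence back to $\sf{PathCat}$ exactly as you say. So your outline is right, but the deferred step is the proof, not a citation.
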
\noindent
    See \autoref{app:framework} for the proof.
    These results work for general type formers, so we also get statements for display map path categories.
    The diagram summarises this: \[\begin{tikzcd}[column sep = -.8em]
        \phantom{\sf{Disp}}\sf{PathCat}_{(\Pi_{\sf{axi}})\phantom{(\bb1_\sf{axi}),(\Sigma_\sf{axi}),}}^{\sf{structured},\sf{LF}} \ar[d,bend left, "\text{cofree}"]\ar[phantom, d, "\dashv"] &
        \iso\sf{DispCat}_{=_{\sf{axi}},\bb1_{\sf{ext}},\Sigma_{\sf{ext}},(\Pi_{\sf{axi}})\phantom{()()}}^{\sf{structured},\sf{LF},\sf{root}} \ar[d,bend left, "\text{forget}"]\ar[phantom, d, "\dashv"] &
        \tto\sf{DispCat}_{=_{\sf{axi}},\bb1_{\sf{ext}},\Sigma_{\sf{ext}},(\Pi_{\sf{axi}})\phantom{()()}}^{\sf{strict},\sf{LF},\sf{root}} \\
        \sf{DispPathCat}_{(\bb1_{\sf{axi}}),(\Sigma_{\sf{axi}}),(\Pi_{\sf{axi}})}^{\sf{structured},\sf{LF},\sf{root}} \ar[u,bend left,"\text{forget}"] &
        \iso\sf{DispCat}_{=_{\sf{axi}},(\bb1_{\sf{axi}}),(\Sigma_{\sf{axi}}),(\Pi_{\sf{axi}})}^{\sf{structured},\sf{LF},\sf{root}} \ar[u,bend left,"\text{free}"] &
        \tto\sf{DispCat}_{=_{\sf{axi}},(\bb1_{\sf{axi}}),(\Sigma_{\sf{axi}}),(\Pi_{\sf{axi}})}^{\sf{strict},\sf{LF},\sf{root}}.
    \end{tikzcd}\]
    Note that the adjunctions, although labelled differently, are the same: depending on the point of view, the left adjoint is either forgetful (it forgets display maps while remembering the fibrations), or free (it takes the identity and composition closure of the display maps), while the right adjoint is either cofree (it takes every fibration to be a display map) or forgetful (it forgets the \(\bb1\) and \(\Sigma\)).

    \section{Conclusion}

    We have established a biequivalence between path categories and models of axiomatic $=$-types extended with extensional $\bb1-$ and $\Sigma$-types.
    This result provides a clean categorical formulation of the semantics of axiomatic =-types, allowing us to construct strict models directly from weak homotopy-theoretic structures.
    Furthermore, we introduced the novel notion of a display map path category.
    This more fine-grained structure distinguishes between types and telescopes, and offers a semantics for minimal axiomatic type theories that do not include extensional type formers.
    As we saw, this leaves room to add other axiomatic type formers, such as axiomatic $\bb1$-, \(\Sigma\)-, and \(\Pi\)-types, providing a modular semantics for axiomatic type theory.
    Our framework lays the groundwork for future investigations into the semantics of higher inductive types and computational interpretations of univalence.
    
    \bibliography{references}
    
    \appendix

    \section{Full syntax and semantics of axiomatic type theory}\label{app:syntaxsemantics}

    The structural rules of axiomatic type theory are: \[
        \prfstack[r]{ctx-empty,}
            {\epsilon\ctx} \qquad
        \prfstack[r,l]{ctx-extend,}{\(x\) fresh}
            {\Gamma\vdash A\type}
            {\Gamma,x:A\ctx} \qquad
        \prfstack[r]{var.}
            {\Gamma,x:A,\Omega\ctx}
            {\Gamma,x:A,\Omega\vdash x:A}
    \]
    These are modelled by any display map category with a designated empty context.
    In a rooted display map category this is the terminal object.

    \subsection{=-types}
    
    The rules for axiomatic based =-types are:\begin{align*}
        &\prfstack[r]{\(=\)F,}
            {\Gamma\vdash A\type \qquad \Gamma\vdash a:A \qquad \Gamma\vdash a':A}
            {\Gamma\vdash a=_Aa'\type}\, &
        &\prfstack[r]{\(=\)I,}
            {\Gamma\vdash a:A}
            {\Gamma\vdash\refl(a):a=_Aa} \\[2ex]
        &\prfstack[r]{\({=}\rm E_{\rm{based}}\),}
            {\Gamma,x':A,\chi:a=_Ax'\vdash C\type}
            {\Gamma\vdash d:C[a/x',\refl(a)/\chi]}
            {\Gamma\vdash\alpha:a=_Aa'}
            {\Gamma\vdash\sf{ind}^=_d(\alpha):C[a/x,a'/x',\alpha/\chi]}\, &
        &\prfstack[r]{\({=}\beta_{\rm{based}}^{\rm{ax}}\).}
            {\Gamma,x':A,\chi:a=_Ax'\vdash C\type}
            {\Gamma\vdash d:C[a/x',\refl(a)/\chi]}
            {\Gamma\vdash\beta^=_d:\sf{ind}^=_d(\refl(a))=d}
    \end{align*}
    This induces a notion in display map categories:
    \begin{definition}[\(=\)-types]
        We say that a display map category has \emph{axiomatic (based) $=$-types} if, for every type $A$ in context $\Gamma$, there exists:
        \begin{description}
            \item[F.] a type \(\Id_A\) in context $\Gamma.A.A^\liltriangle$,
            \item[I.] a term \(\refl_A\) of type \(\Id_A[\delta_A]\) in context \(\Gamma.A\), where \(\delta_A\) is the diagonal map \((1_{\Gamma.A},1_{\Gamma.A}):\Gamma.A\to\Gamma.A.A^\liltriangle\): \[\hfill\begin{tikzcd}[row sep = 1.5em, column sep = 5em]
            	\Gamma.A.\Id_A[\delta_A] \ar[d, disp]\ar[r,"\delta_A^\liltriangle"]\ar[dr, "\lrcorner"{anchor=center, pos=0.125}, draw=none] & \Gamma.A.A^\liltriangle.\Id_A \ar[d, disp] \\
                \Gamma.A \ar[r,"\delta_A"']\ar[u, bend left = 45, "\refl_A", dashed] & \Gamma.A.A^\liltriangle\mathrlap,
        	\end{tikzcd}\hfill\]
            \item[E.]
            for every term \(a\) of type \(A\), every type \(C\) in context \(\Gamma.A.\Id_A[a^\liltriangle]\), and every term \(d\) of type \(C[a,\refl[a]]\), a term \(\ind^=_d\) of \(C\): \[\hfill\begin{tikzcd}[column sep = 5em, row sep = 1.5em]
                \Gamma.C[a,\refl[a]]\ar[d, disp]\ar[r,"{(a,\refl[a])^\liltriangle}"]\ar[dr, "\lrcorner"{anchor=center, pos=0.125}, draw=none] & \Gamma.A.\Id_A[a^\liltriangle].C \ar[d, disp] \\
                \Gamma \ar[r,"{(a,\refl[a])}"']\ar[u, bend left = 45, "d"] & \Gamma.A.\Id_A[a^\liltriangle], \ar[u, bend right = 45, "{\ind_d^=}"', dashed]
        	\end{tikzcd}\hfill\]
            \item[\(\beta\).] for every \(a,C,d\) as before, \(\ind^=_d[a,\refl[a]]\) is propositionally equal to \(d\), that is, we have a term \(\beta^=_d\) of type \(\Id_{C[a,\refl[a]]}[\ind^=_d[a,\refl[a]],d]\).
        \end{description}
        Any such choice of data for a given type $A$ is also called an \emph{axiomatic $=$-type} for $A$. Additionally, we call an \(=\)-type \(\Id_A\) \emph{intensional} if it satisfies the \(\beta\)-rule: for every \(a,C,d\) as before we have that \(\beta^=_d=\refl[d]\), and therefore that $\ind^=_d[a,\refl[a]]=d$.
        We call the \(=\)-type \emph{extensional} if it also satisfies the \(\eta\)-rule: for every \(C\) and term \(c\) of \(C\) we have that $\ind^=_{c[a,\refl[a]]}=c$.
    \end{definition}
    In a strict display map category, we call a choice of =-types \emph{(strictly) stable} if for every type $A$ in context $\Gamma$ and every morphism $\sigma:\Delta\to\Gamma$, we have:\begin{align*}
        \Id_A[\sigma^{\liltriangle\liltriangle}]&=\makesize[r]\Id\refl_{A[\sigma]}, & \ind^=_d[\sigma^{\liltriangle\liltriangle}]&=\ind^=_{d[\sigma^\liltriangle]}, \\
        \refl_A[\sigma^\liltriangle]&=\refl_{A[\sigma]}, & \beta^=_d[\sigma^\liltriangle]&=\makesize[r]\beta\ind^=_{d[\sigma^\liltriangle]}.
    \end{align*}
    We say that the \(=\)-types are \emph{weakly stable} if for every type $A$ in context $\Gamma$ and every morphism $\sigma:\Delta\to\Gamma$ we can equip \(\Id_A[\sigma^{\liltriangle\liltriangle}]\) and \[\hfill\begin{tikzcd}[column sep = huge]
    	{\Delta.A[\sigma]} & {\Delta.A[\sigma].\Id_A[\delta_A][\sigma^\liltriangle]}
    	\arrow["{\refl_A[\sigma^\liltriangle]}", from=1-1, to=1-2]
    \end{tikzcd}\iso\Delta.A[\sigma].\Id_A[\sigma^{\liltriangle\liltriangle}][\delta_{A[\sigma]}],\hfill\] with the additional data required to form a \(=\)-type.

    \subsection{\texorpdfstring{\(\bb1\)}{1}-types}
    
    The rules for axiomatic \(\bb1\)-types are: \begin{align*}
        &\prfstack[r]{\(\bb1\)F,}
            {\Gamma\ctx}
            {\Gamma\vdash\bb1\type}\, &
        &\prfstack[r]{\(\bb1\)I,}
            {\Gamma\vdash 0_{\bb1}:\bb1} \\[2ex]
        &\prfstack[r]{\(\bb1\)E,}
            {\Gamma,w:\bb1\vdash C\type}
            {\Gamma\vdash d:C[0_{\bb1}/w]}
            {\Gamma\vdash i:\bb1}
            {\Gamma\vdash\sf{ind}^{\bb1}_d(i):C[i/w]}\, &
        &\prfstack[r]{\(\bb1\)\(\beta^{\sf{axi}}\).}
            {\Gamma,w:\bb1\vdash C\type}
            {\Gamma\vdash d:C[0_{\bb1}/w]}
            {\Gamma\vdash\beta^{\bb1}_d:\sf{ind}^{\bb1}_d(0_{\bb1})=d}
    \end{align*}
    This induces a notion in display map categories: \begin{definition}[\(\bb1\)-types]
        We say that a display map category has \emph{axiomatic $\bb1$-types} if, for every context $\Gamma$, there exists:
        \begin{description}
            \item[F.] a type $\bb1_\Gamma$ in context $\Gamma$,
            \item[I.] a term \(0_{\bb1}\) of type \(\bb1\): \[\hfill\begin{tikzcd}[column sep = large]
                \Gamma.\bb1 \ar[d, disp] \\
                \Gamma, \ar[u, dashed, bend right = 45, "0_{\bb1}"', pos = 0.4]
            \end{tikzcd}\hfill\]
            \item[E.] for every type \(C\) in context \(\Gamma.\bb1\) and every term \(d\) of type \(C[\bb1]\) a term \(\ind^{\bb1}_d\) of \(C\): \[\hfill\begin{tikzcd}[column sep = large]
                \Gamma.C[0_{\bb1}] \ar[d,disp]\ar[r,"\pair^\liltriangle"]\ar[dr, "\lrcorner"{anchor=center, pos=0.125}, draw=none] & \Gamma.\bb1.C \ar[d, disp] \\
                \Gamma \ar[u, bend left, "d"] \ar[r,"0_{\bb1}"'] & \Gamma.\bb1, \ar[u, bend right, "\ind^{\bb1}_d"', dashed, pos = 0.4]
            \end{tikzcd}\hfill\]
            \item[\(\beta\).] for every \(C\) and \(d\) as before, \(\ind^{\bb1}_d[0_{\bb1}]\) is propositionally equal to \(d\), that is, we have a term \(\beta^{\bb1}_d\) of type \(\Id_{C[0_{\bb1}]}[\ind^{\bb1}_d[0_{\bb1}],d]\).
        \end{description}
        We call a \(\bb1\)-type \(\bb1_\Gamma\) \emph{intensional} if it satisfies the \(\beta\)-rule: for every \(C\) and \(d\) we have that \(\beta^{\bb1}_d=\refl[d]\), and therefore that \(\ind^{\bb1}_d[0_{\bb1}]=d\).
        We call the \(\bb1\)-type \emph{extensional} if it also satisfies the \(\eta\)-rule: for every \(C\) and \(c\) of type \(C\) we have \(\ind^{\bb1}_{c[0_{\bb1}]}=c\).
    \end{definition}
    In a strict display map category, we call a choice of of \(\bb1\)-types \emph{(strictly) stable} if for every context $\Gamma$ and every morphism $\sigma:\Delta\to\Gamma$, we have: \begin{align*}
        (\bb1_\Gamma)[\sigma]&=\bb1_\Delta, &
        \ind^{\bb1}_d[\sigma^\liltriangle]&=\ind^{\bb1}_{d[\sigma^\liltriangle]}, \\
        0_{\bb1}[\sigma]&=0_{\bb1}, &
        \beta^{\bb1}_d[\sigma^\liltriangle]&=\makesize[r]\beta\ind^{\bb1}_{d[\sigma^\liltriangle]}.
    \end{align*}
    We say that the \(\bb1\)-types are \emph{weakly stable} if for every context $\Gamma$ and every morphism $\sigma:\Delta\to\Gamma$, we can equip \(\bb1_\Gamma[\sigma]\) and \(0_{\bb1}[\sigma]\) with the additional data required to form a \(\bb1\)-type.

    \subsection{\texorpdfstring{\(\Sigma\)}{Σ}-types}
    
    The rules for axiomatic \(\Sigma\)-types are: \begin{align*}
        &\prfstack[r]{\(\Sigma\)F,}
            {\Gamma,x:A\vdash B\type}
            {\Gamma\vdash\Sigma_{x:A}B\type}\, &
        &\prftree[r]{\(\Sigma\)I,}
            {\Gamma\vdash a:A}
            {\Gamma\vdash b:B[a/x]}
            {\Gamma\vdash\<a,b\>:\Sigma_{x:A}B} \\[2ex]
        &\prfstack[r]{\(\Sigma\)E,}
            {\Gamma,w:\Sigma_{x:A}B\vdash C\type}
            {\Gamma,x:A,y:B\vdash d:C[\<x,y\>/w]}
            {\Gamma\vdash p:\Sigma_{x:A}B}
            {\Gamma\vdash\sf{ind}^\Sigma_d(p):C[p/w]}\, &
        &\prfstack[r]{\(\Sigma\)\(\beta^{\sf{axi}}\).}
            {\Gamma,w:\Sigma_{x:A}B\vdash C\type}
            {\Gamma,x:A,y:B\vdash d:C[\<x,y\>/w]}
            {\Gamma\vdash a:A \qquad \Gamma\vdash b:B[a/x]}
            {\Gamma\vdash\beta^\Sigma_d(a,b):\sf{ind}^\Sigma_d(\<a,b\>)=d[a/x,b/y]}
    \end{align*}
    This induces a notion in display map categories: \begin{definition}[\(\Sigma\)-types]
        We say that a display map category has \emph{axiomatic $\Sigma$-types} if, for every type $A$ in context $\Gamma$ and type \(B\) in context \(\Gamma.A\), there exists:
        \begin{description}
            \item[F.] a type $\Sigma_AB$ in context $\Gamma$,
            \item[I.] a (generalised) term \(\pair_{A,B}\) of type \(\Sigma_AB\) for \(p_Ap_B:\Gamma.A.B\fib\Gamma\): \[\hfill\begin{tikzcd}[column sep = large]
                \Gamma.A.B \ar[d, disp] \ar[r, "\pair_{A,B}", dashed] &
                \Gamma.\Sigma_AB \ar[d, disp] \\
                \Gamma.A \ar[r, disp] &
                \Gamma,
            \end{tikzcd}\hfill\]
            \item[E.] for every type \(C\) in context \(\Gamma.\Sigma_AB\) and every term \(d\) of type \(C[\pair]\) a term \(\ind^\Sigma_d\) of \(C\): \[\hfill\begin{tikzcd}[column sep = large]
                \Gamma.A.B.C[\pair] \ar[d,disp]\ar[r,"\pair^\liltriangle"]\ar[dr, "\lrcorner"{anchor=center, pos=0.125}, draw=none] & \Gamma.\Sigma_AB.C \ar[d, disp] \\
                \Gamma.A.B \ar[u, bend left, "d"] \ar[r,"\pair"'] & \Gamma.\Sigma_AB, \ar[u, bend right, "\ind^\Sigma_d"', dashed]
            \end{tikzcd}\hfill\]
            \item[\(\beta\).] for every \(C\) and \(d\) as before, \(\ind^\Sigma_d[\pair]\) is propositionally equal to \(d\), that is, we have a term \(\beta^\Sigma_d\) of type \(\Id_{C[\pair]}[\ind^\Sigma_d[\pair],d]\).
        \end{description}
        We call a \(\Sigma\)-type \(\Sigma_AB\) \emph{intensional} if it satisfies the \(\beta\)-rule: for every \(C\) and \(d\) we have that \(\beta^\Sigma_d=\refl[d]\), and therefore that \(\ind^\Sigma_d[\pair]=d\).
        We call the \(\Sigma\)-type \emph{extensional} if it also satisfies the \(\eta\)-rule: for every \(C\) and \(c\) of type \(C\) we have \(\ind^\Sigma_{c[\pair]}=c\).
    \end{definition}
    In a strict display map category, we call a choice of of \(\Sigma\)-types \emph{(strictly) stable} if for every type $A$ in context $\Gamma$, every type \(B\) in context \(\Gamma.A\) and every morphism $\sigma:\Delta\to\Gamma$, we have: \begin{align*}
        (\Sigma_AB)[\sigma]&=\Sigma_{A[\sigma]}(B[\sigma^\liltriangle]), &
        \ind^\Sigma_d[\sigma^\liltriangle]&=\ind^\Sigma_{d[\sigma^\liltriangle]}, \\
        \pair_{A,B}[\sigma]&=\pair_{A[\sigma],B[\sigma^\liltriangle]}, &
        \beta^\Sigma_d[\sigma^\liltriangle]&=\makesize[r]\beta\ind^\Sigma_{d[\sigma^\liltriangle]}.
    \end{align*}
    We say that the \(\Sigma\)-types are \emph{weakly stable} if for every type $A$ in context $\Gamma$, every type \(B\) in context \(\Gamma.A\), and every morphism $\sigma:\Delta\to\Gamma$ we can equip \(\Sigma_A[\sigma]\) and \(\pair[\sigma]\) with the additional data required to form a \(\Sigma\)-type.

    \subsection{\texorpdfstring{\(\Pi\)}{Π}-types}
    
    The rules for axiomatic \(\Pi\)-types are: \begin{align*}
        &\prfstack[r]{\(\Pi\)F,}
            {\Gamma,x:A\vdash B\type}
            {\Gamma\vdash\Pi_{x:A}B\type}\, &
        &\prfstack[r]{\(\Pi\)I,}
            {\Gamma,x:A\vdash b:B}
            {\Gamma\vdash\lambda_{x:A}b:\Pi_{x:A}B} \\[2ex]
        &\prfstack[r]{\(\Pi\)E,}
            {\Gamma\vdash f:\Pi_{x:A}B}
            {\Gamma\vdash a:A}
            {\Gamma\vdash f\,a:B[a/x]}\, &
        &\prfstack[r]{\(\Pi\)\(\beta^{\sf{axi}}\).}
            {\Gamma,x:A\vdash b:B}
            {\Gamma\vdash a:A}
            {\Gamma\vdash\beta^\Pi_a(b):(\lambda_{x:A}b)\,a=b[a/x]}
    \end{align*}
    This induces a notion in display map categories:
    \begin{definition}[\(\Pi\)-types]
        We say that a display map category has \emph{axiomatic $\Pi$-types} if, for every type $A$ in context $\Gamma$ and type \(B\) in context \(\Gamma.A\), there exists:
        \begin{description}
            \item[F.] a type $\Pi_AB$ in context $\Gamma$.
            \item[I.] for every term \(b\) of type \(B\) in context \(\Gamma.A\) a term \(\lambda b\) of type \(\Pi_AB\) in context \(\Gamma\): \[\begin{tikzcd}[column sep = large]
                \Gamma.A.B \ar[d, disp] &
                \Gamma.\Pi_AB \ar[d, disp] \\
                \Gamma.A \ar[r, disp] \ar[u,"b", bend left] &
                \Gamma, \ar[u, bend right, dashed, "\lambda b"']
            \end{tikzcd}\]
            \item[E.] a term \(\sf{app}_{A,B}\) of type \(B\) for \(p_{\Pi_AB}^\weak:\Gamma.\Pi_AB.A^\weak\to\Gamma.A\): \[\begin{tikzcd}[column sep = large]
                \Gamma.\Pi_AB.A^\liltriangle \ar[dr, "p_{\Pi_AB}^\weak"']\ar[rr, "\sf{app}_{A,B}"] && \Gamma.A.B \ar[dl, disp] \\
                & \Gamma.A,
            \end{tikzcd}\]
            \item[\(\beta\).] for any term \(b\) of type \(B\), \(\sf{app}\circ(\lambda b)^\liltriangle\) is propositionally equal to \(b\), that is, we have a term \(\beta^\Pi_b\) of type \(\Id_B[\sf{app}\circ(\lambda b)^\liltriangle,b]\).
        \end{description}
        We call a \(\Pi\)-type is \emph{intensional} if it satisfies the \(\beta\)-rule: for every \(b\) we have that \(\beta^\Pi_b=\refl[b]\), and therefore that \(\sf{app}\circ(\lambda b)^\liltriangle=b\).
        We call the \(\Pi\)-type \emph{extensional} if it also satisfies the \(\eta\)-rule: for every \(f\) of \(\Pi_AB\) in context \(\Gamma\) we have \(\lambda(\sf{app}\circ f^\liltriangle)=f\).
    \end{definition}
    In a strict display map category, we call a choice of of \(\Pi\)-types \emph{strictly stable} under our choices for substitutions if for every \(B\) over \(\Gamma.A\) and \(\sigma:\Delta\to\Gamma\): \begin{align*}
        (\Pi_AB)[\sigma]&=\Pi_{A[\sigma]}(B[\sigma^\liltriangle]), &
        \sf{app}_{A,B}[\sigma^\liltriangle]&=\sf{app}_{A[\sigma],B[\sigma^\liltriangle]}, \\
        (\lambda b)[\sigma]&=\lambda(b[\sigma^\liltriangle]), &
        \beta^\Pi_d[\sigma^\liltriangle]&=\beta^\Pi_{b[\sigma^\liltriangle]}.
    \end{align*}
    We say that the \(\Pi\)-types are \emph{weakly stable} if for any \(B\) over $\Gamma.A$ and $\sigma:\Delta\to\Gamma$ we can equip $(\Pi_AB)[\sigma]$ and \(\sf{app}_{A,B}[\sigma^\liltriangle]\) with the additional data required to form a \(\Pi\)-type.
    
    In addition, we consider the \emph{(dependent) function extensionality axiom} stating for every \(f,f':\Pi_{x:A}B\) that the canonical function \((f=f)\to\Pi_{x:A}(f\,x=f'\,x)\) is an equivalence \cite{garner_strength_2009,lumsdaine_strong_2011}.
    This is where our naming convention of axiomatic/intensional/extensional type formers can be a bit confusing: the axiom of function extensionality is independent from \(\Pi\)-types being extensional (satisfying \(\beta\)- and \(\eta\)-rules).
    However, if the type theory is extensional --- every type former is extensional or equivalently the =-types are extensional --- then the axiom of function extensionality holds.

    \section{Additional details on the 2-isomorphism}\label{app:iso}

    In this appendix we present the ingredients involved in the statement of the 2-isomorphism of between the 2-categories \(\sf{PathCat}\) and \(\sf{DispCat}^{\sf{root}}_{=_{\sf{axi}},\bb1_{\sf{ext}},\Sigma_{\sf{ext}}}\). In \autoref{sec:dispcat} and \autoref{app:syntaxsemantics} we recalled the notion of a display map category and described how it can be equipped with the right structure to interpret a theory with specified type formers.
    Similarly, in \autoref{sec:pathcat} we outlined the notion of a path category. Here we expand on this by giving a detailed account of the 2-categories involved.

    \subparagraph*{Display map categories.}
    The 2-category $\sf{DispCat}$ consists of: \begin{description}
        \item[0-cells.] The display map categories.
        
        \item[1-cells.] If $\mathcal{C}$ and $\mathcal{D}$ are display map categories, the 1-cells from $\mathcal{C}$ to $\mathcal{C}'$ are functors $G:\mathcal{C}\to\mathcal{C}'$ that map display maps to display maps and pullback squares of display maps into pullback squares of display maps.
        
        \item[2-cells.] If $G$ and $G'$ are parallel 1-cells from $\mathcal{C}$ to $\mathcal{D}$, the 2-cells from $G$ to $G'$ are natural transformations $G\Rightarrow G'$.
    \end{description}
    This can be specialised to several suc-2-categories where the 0-cells are endowed with additional weakly stable type formers, and the 1-cells weakly preserve them.
    For example, we write $\sf{DispCat}_{=_\sf{axi}}$/$\sf{DispCat}_{=_\sf{int}}$/$\sf{DispCat}_{=_\sf{ext}}$ for the 2-categories with: \begin{description}
        \item[0-cells.] The display map categories equipped with weakly stable based axiomatic/intensional/extensional $=$-types.
        \item[1-cells.] If $\cal C$ and $\cal C'$ are 0-cells then the 1-cells from $\cal C$ to $\cal C'$ are those 1-cells of $\sf{DispCat}$ that weakly preserve the based axiomatic/intensional/extensional $=$-types --- see \autoref{sec:dispcat}.
        \item[2-cells.] The ones of $\sf{DispCat}$.
    \end{description}
    We follow the same pattern for other type formers (\(\bb1\), \(\Sigma\), \(\Pi\)).
    We write $\sf{DispCat}^{\sf{root}}$ for the sub-2-category where the 0-cells are rooted, and the 1-cells preserve the terminal object.
    Similarly, $\sf{DispCat}^{\sf{LF}}$ is the sub-2-category where the 0-cells satisfy the logical framework (LF) condition, and the 1-cells preserve the products and dependent exponents present.
    All of these restrictions can be combined in various ways; for example \(\sf{DispCat}_{=_\sf{axi},\bb1_\sf{ext},\Sigma_\sf{ext}}^{\sf{root}}\).
    
    Lastly, we write $\sf{DispCat}^{\sf{structured}}$ and $\sf{DispCat}^{\sf{strict}}$ for the full sub-2-categories of $\sf{DispCat}$ spanned by structured and strict display map categories, respectively.
    If this is combined with type formers, then the 0-cells of $\sf{DispCat}^{\sf{structured}}_{\dots}$ also have choices for the type formers, but they are still only weakly stable under re-indexing, and only weakly preserved by 1-cells.
    The 0-cells of $\sf{DispCat}^{\sf{strict}}_{\dots}$ have choices for the type formers, and they are strictly stable under re-indexing, but the 1-cells only weakly preserve them.
    Note that these are the same choices that Clairambault and Dybjer \cite{MR3272793} make for their 2-categories.

    \subparagraph*{Path categories.}
    The 2-category \(\sf{PathCat}\) consists of: \begin{description}
        \item[0-cells.] Path categories.
        \item[1-cells.] Functors preserving fibrations, trivial fibrations, pullbacks of fibrations, and the terminal object (and therefore also equivalences and path objects \cite{Brown1973}).
        \item[2-cells.] Natural transformations.
    \end{description}
    Again, in $\sf{PathCat}^{\sf{LF}}$ the 1-cells preserve the products and dependent exponents present, while $\sf{PathCat}^{\sf{structured}}$ and $\sf{PathCat}^{\sf{strict}}$ are full sub-2-categories.

    \section{Proofs}

    This appendix contains complete proofs of the statements of \autoref{sec:iso}.
    
    \pathtodisp*

    \begin{proof}
        Proven the first sentence of the statement, the second follows by \autoref{prop:clanasdisp}. To prove the first sentence, we are going to show that: \emph{formation} and \emph{introduction} for a type $A$ in context $\Gamma$ are provided by a path object for the fibration $A\fib \Gamma$; \emph{elimination} and \emph{$\beta$-axiom} follow by lifting (\autoref{the:lifting}); weak stability follows from the fact that the re-indexing of a path object is a path object itself.
        
        If $A$ is a type in context $\Gamma$ then we show that a path object \(\Id_A\coloneqq P_\Gamma A\) in \(\cal C_\Gamma\) has the structure of a (based) axiomatic \(=\)-type, where \emph{formation} is given by $(s,t):\Id_A\fib A\times_\Gamma A=\Gamma.A.A^\liltriangle$ and \emph{introduction} is given by the unique term $\refl_A$ of $\Id_A[\delta_A]$ such that $\delta_A^\liltriangle \refl_A = r$.
        Now, given a term \(a\) of \(A\), a type \(C\) in context \(\Gamma.A.\Id_A[a^\liltriangle]\), and term \(d\) of \(C[a,\refl[a]]\), \emph{elimination} and \emph{computation} will follow by using the lifting theorem on the square:
        \[\hfill\begin{tikzcd}[column sep=huge,row sep=small]
        	\Gamma & {\Gamma.C[a,\refl_A[a]]} & {\Gamma.A.\Id_A[a^\liltriangle].C} \\
        	\\
        	{\Gamma.A.\Id_A[a^\liltriangle]} && {\Gamma.A.\Id_A[a^\liltriangle].}
        	\arrow["d", from=1-1, to=1-2]
        	\arrow["{(a,\refl_A[a])}"', from=1-1, to=3-1]
        	\arrow["{(a,\refl_A[a])^\liltriangle}", from=1-2, to=1-3]
        	\arrow[from=1-3, to=3-3, fib]
        	\arrow[equals, from=3-1, to=3-3]
        \end{tikzcd}\hfill\]
        Let $d'$ be the composition arrow $\Gamma\to\Gamma.A.\Id_A[a^\liltriangle].C$. We observe that the map \((a,\refl_A[a])=a^\liltriangle\refl_A[a]\) on the left is a weak equivalence since it is a section of $p_Ap_{\Id_A[a^\liltriangle]}$ which is a trivial fibration as it is the pullback of \(s\) --- a trivial fibration --- along $a$.
        By \autoref{the:lifting} there is a term $\ind_d^=$ of $C$ such that $\ind_d^=(a,\refl_A[a])\simeq_{\Gamma.A.\Id_A[a^\liltriangle]}d'$.
        In other words $(\ind_d^=(a,\refl_A[a]),d')$ is the pre-composition of $p_{\Id_C}$ via some arrow $H:\Gamma\to\Gamma.A.\Id_A[a^\liltriangle].C.C^\liltriangle.\Id_C$.
        By the universal property of $\Id_C[(a,\refl_A[a])^{\liltriangle\liltriangle}]$, the arrow $H$ factors uniquely as an arrow: $$H':\Gamma\to \Gamma.C[a,\refl[a]].C[(a,\refl_A[a])^\liltriangle].\Id_C[(a,\refl_A[a])^{\liltriangle\liltriangle}]$$ which yields: $$\Gamma\xrightarrow{  ( \ind_d^=[a,\refl[a]] , d )  }\Gamma.C[a,\refl[a]].C[(a,\refl_A[a])^\liltriangle]$$ by post-composition via $p_{\Id_C[(a,\refl_A[a])^{\liltriangle\liltriangle}]}$. Since the arrow $i p_{\Id_C[(a,\refl_A[a])^{\liltriangle\liltriangle}]}$, where $i$ is the canonical isomorphism: $$i:\Gamma.C[a,\refl[a]].C[(a,\refl_A[a])^\liltriangle]\iso\Gamma.C[a,\refl[a]].C[a,\refl[a]]^\liltriangle$$ can be proven to be a path object of $p_{C[a,\refl[a]]}$, there is an arrow: $$j:\Gamma.C[a,\refl[a]].C[(a,\refl_A[a])^\liltriangle].\Id_C[(a,\refl_A[a])^{\liltriangle\liltriangle}]\to\Gamma.C[a,\refl[a]].C[a,\refl[a]]^\liltriangle.\Id_{C[a,\refl[a]]}$$ over $i$. The arrow $jH'$ yields: $$\Gamma\xrightarrow{  ( \ind_d^=[a,\refl[a]] , d )  }\Gamma.C[a,\refl[a]].C[a,\refl[a]]^\liltriangle$$ by post-composition via $p_{\Id_{C[a,\refl[a]]}}$, hence factors as a term of type: $$\Id_{C[a,\refl[a]]}[ \ind_d^=[a,\refl[a]] , d ]$$ so the $\beta$-axiom is satisfied.
        
        Finally, if $A$ is a type in context $\Gamma$ then the re-indexing of a path object along an arrow $\sigma:\Delta \to \Gamma$ constitutes a path object of $\Delta.A[\sigma]\fib\Delta$ --- this follows from the fact that the re-indexing of a fibration is a fibration, that the re-indexing of a trivial fibration is a trivial fibration, and that a section of a trivial fibration is a weak equivalence.
        By the previous argument, it satisfies \textit{elimination} and \textit{$\beta$-axiom}.
        Therefore, the axiomatic $=$-types that we defined are weakly stable.
    \end{proof}

    \identityvariations*
    
    \begin{proof}[Proof of \autoref{prop:identityvariations}.] The proof of the first equivalence is a straighforward adaptation of Bocquet's argument \cite[Section 3]{Bocquet2020} to non strict models.
        
        For the second equivalence, we show that a parametrized unbased axiomatic $=$-type for a type $A$ in a given context $\Gamma$ can be defined as a specific re-indexing of a parametrized based axiomatic $=$-type for (a choice of) $A^\liltriangle$ over $\Gamma.A$ --- namely, along the corresponding morphism $\delta_A^\liltriangle$.
        
        Let $A$ be a type over $\Gamma$. Let $(\Id_{A^\liltriangle},\refl_{A^\liltriangle},\ind^=,\beta^=)$ be a based axiomatic $=$-type for (a choice of) $A^\liltriangle$ over $\Gamma.A$. Then for the morphism \(\delta_A^\liltriangle:\Gamma.A.A^\liltriangle\to\Gamma.A.A^\liltriangle.A^\liltriangle[p_{A^\liltriangle}]\) --- intuitively sending \((\gamma,x,x')\) to \((\gamma,x,x,x')\) --- the type $\underline{\Id}_A\coloneqq\Id_{A^\liltriangle}[\delta_A^\liltriangle]$ in context $\Gamma.A.A^\liltriangle$ together with the term: $${\underline{\refl}_A\coloneqq(\Gamma.A\xrightarrow{\refl_{A^\liltriangle}[\delta_A]}\Gamma.A.\Id_{A^\liltriangle}[\delta_{A^\liltriangle}][\delta_A]\iso\Gamma.A.\Id_{A^\liltriangle}[\delta_A^\liltriangle][\delta_A])}$$ of type $\underline{\Id}_A[\delta_A]=\Id_{A^\liltriangle}[\delta_A^\liltriangle][\delta_A]$ can be equipped with the structure of an unbased axiomatic $=$-type as follows.\footnote{Observe that $(\underline{\Id},\underline{\refl})$ coincides with $(\Id,\refl)$ if \((\mathcal{C},\mathcal{T}, F, p)\) is split.}

        To validate the unbased axiomatic eliminator, whenever we are given a type $C$ in context $\Gamma.A.A^\liltriangle.\underline{\Id}_A$ and a term $d$ of type $C[\delta_A,\underline{\refl}_A]$, we have to construct a term \(\underline{\sf{ind}}_d^=\) of type \(C\) and a term \(\underline{\makesize\beta m}^=_d\) of type \(\Id_{C[\delta_A,\underline{\refl}_A]}[\sf{ind}_d^=[\delta_A,\underline{\refl}_A],d]\). However, if $C$ is a type in context $\Gamma.A.A^\liltriangle.\underline{\Id}_A$ and $d$ is a term of type $C[\delta_A,\underline{\refl}_A]$ then the arrow: $$\begin{tikzcd}
        	{\Gamma.A} &&&& {\Gamma.A.A^\liltriangle.\underline{\Id}_A}
        	\arrow["{\delta_A,\underline{\refl}_A}", from=1-1, to=1-5]
        \end{tikzcd}$$ coincides with the arrow: $$\begin{tikzcd}
        	{\Gamma.A} &&& && {\Gamma.A.A^\liltriangle.\Id_{A^\liltriangle}[\delta_A^\liltriangle]}
        	\arrow["{(\delta_A,\refl_{A^\liltriangle}[\delta_A])}", from=1-1, to=1-6]
        \end{tikzcd}$$ which is precisely the arrow $(a,\refl_A[a])$ in the based eliminator for $A$ and $a$ replaced by $A^\liltriangle$ and $\delta_A$ respectively.
        Therefore, for such $C$ and $d$, the based eliminator and its $\beta$-axiom collapse precisely into the ordinary, unbased, elimination and $\beta$-axiom for $(\underline{\Id},\underline{\refl})$.
        Via the same argument, one can verify that the parametrized unbased elimination and its $\beta$-axiom hold as well: if $C$ is a type in some extended context $\Gamma.A.A^\liltriangle.\underline{\Id}_A.\Omega$ and $d$ is a term of type $C[(\delta_A,\underline{\refl}_A)^\liltriangle]$ then there exists a choice of a parametrized unbased elimination and $\beta$-axiom: as before, they are specific instances of the parametric based elimination and $\beta$-axiom.
        
        Weak stability for $(\underline{\Id},\underline{\refl})$ follows from weak stability for $(\Id,\refl)$.

    The strategy to prove that transport is not admissible in a theory with only the Martin-L\"of elimination and computation rules and no \(\Pi\)-types is due to Rafa\"el Bocquet \cite[page 15]{Bocquet2020}. We expand the proof while adjusting a small subtlety surrounding symmetry. We also refer the reader to \cite{MR2469279}.
        We consider the syntactic category of a type theory with only the structural rules, the rules \(=\)F and \(=\)I, primitive types \(R\) and \(x:R\vdash S(x)\), and primitive terms \(r,r':R\), \(\rho:r=_Rr'\), \(\rho^{-1}:r'=_Rr\), \(s:S(r)\).
        We can visualise this as follows:
            \[\hfill\begin{tikzpicture}[scale = 0.5]
                \fill (-1,3) node[above] {\(s\)}  circle[radius = 2pt];
                \fill (-1,0) node[left]  {\(r\)}  circle[radius = 2pt];
                \fill (1,0)  node[right] {\(r'\)} circle[radius = 2pt];
                \draw[decorate, decoration=snake] (-1,0) -- node[below] {\(\rho^{-1}\)} node[above] {\(\rho\)} (1,0);
                \draw[|->] (-1,2.75) -- (-1,0.25);
                \draw (0,3) ellipse (4 and 1);
                \node at (4.5,3) {\(S\)};
                \draw (0,0) ellipse (4 and 1);
                \node at (4.5,0) {\(R\)};
            \end{tikzpicture}\hfill\]
        The only terms are the variables, the generators \(r,r',\rho,\rho^{-1},s\), and the iterated reflexivity terms.
        Therefore, we see that \(S(r')\) is not inhabited in the empty context, which means that the model does not satisfy transport.
        However, we will show that the Martin-L\"of elimination and computation rules are admissible.
        Suppose that the assumptions of the elimination rule hold.
        If \(C\) does not depend on \(x',\chi\), then we can just take \(\sf{ind}^=_d(\alpha)\coloneqq d[a/x]\).
        Otherwise, since such a term \(d\) does not exist for \(C\coloneqq S(x')\), \(C\) must be of the form \(t'=_Tt''\) and \(d\) of the form \(\refl(t)\), where \(t'[x/x',\refl(x)/\chi]\equiv t\equiv t''[x/x',\refl(x)/\chi]\).
        
        If \(t'\equiv t''\), then we can take \(\sf{ind}^=_d(\alpha)\coloneqq\refl(t'[a/x,a'/x',\alpha/\chi])\).
        Otherwise, we see that \(t'\)~and~\(t''\) are both not generators, and, because they are terms of the same type \(T\), not reflexivity terms.
        The only remaining options are for \(t'\)~and~\(t''\) to be \(x\)~and~\(x'\), or \(x'\)~and~\(x\), respectively.
        In these remaining cases, we take \(\sf{ind}^=_d(\alpha)\coloneqq\alpha\) and \(\sf{ind}^=_d(\alpha)\coloneqq\alpha^{-1}\), respectively, where \(\alpha^{-1}\) is defined by \(\refl(b)^{-1}\coloneqq\refl(b)\), \((\rho)^{-1}\coloneqq\rho^{-1}\), and \((\rho^{-1})^{-1}\coloneqq\rho\).
        Our definition of \(\ind^=_d(\alpha)\) satisfies the \(\beta\)-rule because for all cases we see that \(\ind^=_d(\refl(a))\equiv d[a/x]\). \end{proof}
    
    \disptopath*

    \begin{proof}
        In \autoref{sec:iso} we explained that to have a rooted display map category with weakly stable (unparametrized based) axiomatic $=$-types and extensional \(\bb1\)- and \(\Sigma\)-types is to have a path category. 
        In this appendix we provide full verification of the fact that, for the clan $\mathcal{C}$ with weakly stable parametrized unbased axiomatic \(=\)-types induced by such a display map category, the property (\ref{equ:logicalequivalence}) holds.
    
        Let $\Delta\xrightarrow{\sigma}\Gamma$ be any morphism of contexts. We are left to prove that arrows as in (\ref{equ:logicalequivalence}) --- see \autoref{sec:iso} --- exist for every fibration $\Omega\fib\Gamma$ of $\cal C$. However, a fibration of $\cal C$ is nothing but a display map. Therefore, we are left to prove that arrows as in (\ref{equ:logicalequivalence}) exist for a type $A$ in context $\Gamma$, i.e. that there are arrows making the two squares:
        \[\begin{tikzcd}[column sep=tiny, row sep=small]	{\Delta.A[\sigma].A[\sigma]^\liltriangle.\underline{\Id}_{A[\sigma]}} && {\Delta.A[\sigma].A^\liltriangle[\sigma^\liltriangle].\underline{\Id}_A[\sigma^{\liltriangle\liltriangle}]} \\
        	\\
        	{\Delta.A[\sigma].A[\sigma]^\liltriangle} && {\Delta.A[\sigma].A^\liltriangle[\sigma^\liltriangle]}
        	\arrow[shift right, from=1-1, to=1-3]
        	\arrow[from=1-1, to=3-1]
        	\arrow[shift right, from=1-3, to=1-1]
        	\arrow[from=1-3, to=3-3]
        	\arrow[iso, from=3-1, to=3-3]
        \end{tikzcd}\] commute. In order to build ${\Delta.A[\sigma].A[\sigma]^\liltriangle.\underline{\Id}_{A[\sigma]}} \to {\Delta.A[\sigma].A^\liltriangle[\sigma^\liltriangle].\underline{\Id}_A[\sigma^{\liltriangle\liltriangle}]}$, we are going to use that $\underline{\Id}_{A[\sigma]}$ is an axiomatic $=$-type for $A[\sigma]$. By weak stability, so is $\underline{\Id}_A[\sigma^{\liltriangle\liltriangle}]$, therefore the same argument allows to build ${\Delta.A[\sigma].A^\liltriangle[\sigma^\liltriangle].\underline{\Id}_A[\sigma^{\liltriangle\liltriangle}]} \to {\Delta.A[\sigma].A[\sigma]^\liltriangle.\underline{\Id}_{A[\sigma]}}$.
        
        Since the square on the left: 
        \[\begin{tikzcd}[column sep=tiny,row sep=small]
        	{\Delta.A[\sigma]} && {\Delta.A[\sigma].A[\sigma]^\liltriangle.\underline{\Id}_A[\sigma^{\liltriangle\liltriangle}][j]} && {\Delta.A[\sigma].A^\liltriangle[\sigma^\liltriangle].\underline{\Id}_A[\sigma^{\liltriangle\liltriangle}]} \\
        	\\
        	{\Delta.A[\sigma].A[\sigma]^\liltriangle.\underline{\Id}_{A[\sigma]}} && {\Delta.A[\sigma].A[\sigma]^\liltriangle} && {\Delta.A[\sigma].A^\liltriangle[\sigma^\liltriangle]}
        	\arrow["{\underline{r}_A[\sigma]}"{description}, from=1-1, to=1-3]
        	\arrow["{\underline{r}_{A[\sigma]}}"{description}, from=1-1, to=3-1]
        	\arrow["{\delta_{A[\sigma]}}"{description}, from=1-1, to=3-3]
        	\arrow["{j^\liltriangle}"{near start}, iso, from=1-3, to=1-5]
        	\arrow[from=1-3, to=3-3]
        	\arrow[from=1-5, to=3-5]
        	\arrow[from=3-1, to=3-3]
        	\arrow["{j}"{near start}, iso, from=3-3, to=3-5]
        	\arrow["\lrcorner"{anchor=center, pos=0.125}, draw=none, from=1-3, to=3-3]
        \end{tikzcd}\] commutes, we can factor $\underline{r}_A[\sigma]$ as: \[\Delta.A[\sigma]\to\Delta.A[\sigma].A[\sigma]^\liltriangle.\underline{\Id}_{A[\sigma]}.\underline{\Id}_A[\sigma^{\liltriangle\liltriangle}][j][p_{\underline{\Id}_{A[\sigma]}}]\xrightarrow{p_{\underline{\Id}_{A[\sigma]}}^\liltriangle}\Delta.A[\sigma].A[\sigma]^\liltriangle.\underline{\Id}_A[\sigma^{\liltriangle\liltriangle}][j]\] in such a way that: \[\Delta.A[\sigma]\to\Delta.A[\sigma].A[\sigma]^\liltriangle.\underline{\Id}_{A[\sigma]}.\underline{\Id}_A[\sigma^{\liltriangle\liltriangle}][j][p_{\underline{\Id}_{A[\sigma]}}]\xrightarrow{p_{\underline{\Id}_{A[\sigma]}}}\Delta.A[\sigma].A[\sigma]^\liltriangle.\underline{\Id}_{A[\sigma]}\] is $\underline{r}_{A[\sigma]}$. In other words, the square:
        \[\begin{tikzcd}[column sep=tiny,row sep=small]
        	{\Delta.A[\sigma]} & {\Delta.A[\sigma].A[\sigma]^\liltriangle.\underline{\Id}_{A[\sigma]}.\underline{\Id}_A[\sigma^{\liltriangle\liltriangle}][j][p_{\underline{\Id}_{A[\sigma]}}]} \\
        	\\
        	{\Delta.A[\sigma].A[\sigma]^\liltriangle.\underline{\Id}_{A[\sigma]}} & {\Delta.A[\sigma].A[\sigma]^\liltriangle.\underline{\Id}_{A[\sigma]}}
        	\arrow[from=1-1, to=1-2]
        	\arrow["{\underline{r}_{A[\sigma]}}"{description}, from=1-1, to=3-1]
        	\arrow[from=1-2, to=3-2]
        	\arrow[equal, from=3-1, to=3-2]
        \end{tikzcd}\] commutes, hence by elimination the display map $p_{\underline{\Id}_A[\sigma^{\liltriangle\liltriangle}][j][p_{\underline{\Id}_{A[\sigma]}}]}$ has a section $\text{sect}$. The composition: \[j^\liltriangle p_{\underline{\Id}_{A[\sigma]}}^\liltriangle \text{sect}\] is the desired arrow ${\Delta.A[\sigma].A[\sigma]^\liltriangle.\underline{\Id}_{A[\sigma]}} \to {\Delta.A[\sigma].A^\liltriangle[\sigma^\liltriangle].\underline{\Id}_A[\sigma^{\liltriangle\liltriangle}]}$.
    \end{proof}

    \pathasdisp*

    \begin{proof}
        \autoref{prop:pathtodisp} and \autoref{prop:disptopath} show that we have a bijection between the 0-cells.
        To be precise, display map categories with a root, weakly stable axiomatic =types, and extensional \(\bb1\)- and \(\Sigma\)-types are categories equipped with one subclass of maps (display maps), while path categories are equipped with two subclasses (fibrations and weak equivalences).
        However, because of \autoref{prop:clanasdisp} we know that the display maps coincide with the fibrations of the display map category, and the class of weak equivalences is uniquely specified by saturation (\autoref{the:saturation}) to be the homotopy equivalences.
        So, this bijection only forgets or reconstructs the uniquely specified class of weak equivalences.
        
        Therefore, the only thing we need to check is that the 1-cells are the same, as the 2-cells are chosen to be all natural transformations for both.
        If we have a 1-cell of path categories, then it preserves pullbacks of display maps as they coincide with the fibrations, and it weakly preserves =-types because every =-type is a path object.
        It also pseudo preserves \(\bb1\)- and \(\Sigma\)-types because they are given by up to isomorphism by identity and composition.
        Hence it is also a 1-cell between such display map categories.
        Conversely, if we are given a 1-cell between such display map categories, then it preserves fibrations of the corresponing path categorical structures --- as these are the display maps --- pullbacks of fibrations, and the terminal object. Hence we are left to prove that it preserves weak equivalences of the corresponing path categorical structures. By \autoref{the:saturation} --- see also the proof of \autoref{prop:disptopath} --- the latter are precisely the homotopy equivalences definable in the given domain and codomain display map categories. As explained in \autoref{sec:dispcat}, such a homotopy equivalence consist of a term \(b:\Gamma.A\to\Gamma.B\) of \(B\) for \(p_A\) such that there exist a term \(a:\Gamma.B\to\Gamma.A\) of \(A\) for \(p_B\) and terms of \(\Id_A[ab,1_{\Gamma.A}]\) and \(\Id_B[ba,1_{\Gamma.B}]\). Then, since the given 1-cell weakly preserves axiomatic $=$-types --- i.e. up to homotopy equivalence --- and maps pullback squares of display maps to pullback squares, if follows that the image of $b$ is itself a homotopy equivalence. This concludes the argument.
    \end{proof}
    
    \section{Different categorical frameworks for dependent type theory}\label{app:framework}

    We have used display map categories as our categorical framework for the basic structural rules of dependent type theory; however, there are many other frameworks, which are mostly biequivalent as 2-categories (for an overview see \cite{Ahrens_Lumsdaine_North_2025}).
    Because we cite work using other frameworks --- most notably the work of Clairambault and Dybjer \cite{MR3272793} and Bocquet \cite{Bocquet2020,MR4481908} using categories with families, and the work of Lumsdaine and Warren \cite{MR3372323} using full comprehension categories --- we will clarify some subtleties.
    
    Categories with families have primitive notions for types and terms, and --- because they require substitution to be strictly functorial --- correspond to a strict notion of semantics.
    However, although this means that substitution respects identity and composition on the nose, we can still consider type formers that are only weakly stable, as is done by Bocquet \cite{MR4481908}.
    Full comprehension categories make the simplification of identifying terms with sections of display maps (which are always in bijection for categories with families).
    They consist of a full and faithful functor \(\cal T\to\cal C^\to\) (sending every type to its display map), such that the composition with \(\sf{codom}:\cal C^\to\to\cal C\) is a Grothendieck fibration, and \(p:\cal T\to\cal C^\to\) preserves Cartesian morphisms.
    This is a weak notion of semantics, meaning that there can be multiple isomorphic options for the re-indexing.
    In a display map category we simplify further by identifying types with their display map.
    This means that \(\cal T\to\cal C^\to\) is an inclusion of categories, so it is also injective on objects.
    In addition, we ask for this subclass to be replete.
    This is not always true for a full comprehension category, but this notion is biequivalent.
    We can think of a display maps in a comprehension category as strict display maps, while the display maps of a display map category are pseudo display maps.

    \subparagraph*{Structured and strict.}
    If we ask for the Grothendieck fibration to be cloven or split we obtain structured and strict notions.
    The structured display map categories are biequivalent to the cloven comprehension categories, where the strict display maps correspond to the display maps of the comprehension category, and the more general display maps are their repletion.
    This is true because we do not require the 1-cells between structured display map categories to preserve the choice of re-indexing on the nose, so we get a closer correspondence than Ahrens, Lumsdaine, and North \cite{Ahrens_Lumsdaine_North_2025}.
    Here we have to be a bit careful: multiple types can be sent to the same display map, so if we just replace \(\cal T\) by the image of \(p:\cal T\to\cal C^\to\), then having choices for re-indexing types generally does not even give choices for re-indexing display maps.
    Instead, we should replace \(\cal C\) by the \emph{mapping cylinder} \(\cal M\) of \[\hfill\begin{tikzcd}
        \cal T \ar[r, "p"] & \cal C^\to \ar[r,"\sf{dom}"] & \cal C.
    \end{tikzcd}\hfill\] 
    \(\cal M\) is given by the disjoint union of \(\cal C\) and \(\cal T\) on objects.
    We specify the morphisms by saying that both are full subcategories, and maps between a type \(A\) in \(\cal T\) and a context \(\Delta\) in \(\cal C\) or vice versa are given by maps between the context \(\Gamma.A\coloneqq\sf{dom}(p_A)\) and the context \(\Delta\).
    Now we see that 
    The strict display maps are now the maps \(A\to\Gamma\) given by \(p_A\) for a type \(A\) in \(\cal T\) and its context \(\Gamma\coloneqq\sf{codom}(p_A)\).
    So, because this uniquely corresponds to an object of \(\cal T\), we still have good choices for re-indexing.
    In addition, the strict display map categories are biequivalent to the strict comprehension categories.

    \subsection{Strictifying}

    We explain how to extend type formers from strict display maps to general display maps, in a structured display map category. This is immediate for extensional $\bb1$- and $\Sigma$-types. Regarding axiomatic $=$-types, we first observe that in this case a general display map is a strict display map, or an isomorphism, or a strict display map precomposed by an isomorphism --- this follows immediately by using extensional $\bb1$- and $\Sigma$-types. For strict display maps, we are done. Let us examine the other two cases:
    
    (a) For an isomorphism $\Omega \iso \Gamma$, a re-indexing $\Omega\times_\Gamma\Omega\fib\Omega$ of $\Omega \iso \Gamma$ along itself is represented by the identity $\Omega = \Omega$, and in this case $(1,1):\Omega \to \Omega\times_\Gamma\Omega$ is $\Omega = \Omega$. Taking both \textit{formation} and \textit{introduction} to be the identity arrow $\Omega = \Omega$, elimination and $\beta$-axiom ($\beta$-rule) are satisfied.
    
    (b) For a general display map of the form $\Omega \iso \Gamma.A \disp \Gamma$, a re-indexing of $\Omega \iso \Gamma.A \disp \Gamma$ along itself is $\Omega.A[P_Af]\disp\Omega$, where $f$ is the isomorphism $\Omega \iso \Gamma.A$, and $\Omega.A[P_Af]$ is isomorphic to $\Gamma.A.A^\liltriangle$. Let $i$ be this isomorphism. The factorisation: \[\begin{tikzcd}
    	{\Gamma.A} && {\Gamma.A.A^\liltriangle.\underline{\Id}_A} && {\Gamma.A.A^\liltriangle}
    	\arrow["{\delta_A^\liltriangle\underline{\refl}_A}", from=1-1, to=1-3]
    	\arrow[from=1-3, to=1-5, disp]
    \end{tikzcd}\] of $\delta_A$ induces a factorisation: \[\begin{tikzcd}
    	{\Omega\cong\Gamma.A} && {\Gamma.A.A^\liltriangle.\underline{\Id}_A\cong\Omega.A[P_Af].\underline{\Id}_A[i]} && {\Omega.A[P_Af]}
    	\arrow["{\delta_A^\liltriangle\underline{\refl}_A}", from=1-1, to=1-3]
    	\arrow[from=1-3, to=1-5,disp]
    \end{tikzcd}\] of $\Omega\to\Omega.A[P_Af]$ which satisfies elimination and $\beta$-axiom.

    \structuredpathtomodel*
    \begin{proof}
        As explained in \cite{Ahrens_Lumsdaine_North_2025}, the (structured) display map category $\cal C$ --- see \autoref{the:pathasdisp} --- can be phrased as a structured comprehension category, over $\cal C$, with the same type formers. Then, by \cite[Theorem 3.4.1]{MR3372323}, the left adjoint splitting of the latter forms a split rooted LF comprehension category, again over $\cal C$, with stable extensional $\bb1$- and $\Sigma$-types and weakly stable axiomatic $=$-types. By \cite[Proposition 25 and Theorem 1]{MR4481908}, applying the left adjoint splitting again to the latter yields a strict rooted LF display map category, again over $\cal C$, this time with stable type formers. Its induced class of display maps constitute the strict display maps of a strict display map category --- whose general display maps are obtained by repleting this class --- as in the statement: it is immediate to extend type formers to general display maps as above.
    \end{proof}

    \pathcatcoherence*

    \proof Let $\mathcal{C}$ be a structured LF path category. By \autoref{the:pathasdisp} the model $\mathcal{C}_!$ of extensional $\bb1$- and $\Sigma$-types and axiomatic $=$-types provided by \autoref{the:structuredpathtomodel} constitutes another structure of an LF path category over the category $\mathcal{C}$, which we continue denoting as $\mathcal{C}_!$.
    In particular $\mathcal{C}_!$ is a structured and strict path category if we prove that by repleting strict display maps, we obtain a family of fibrations.
    This is because by \autoref{the:pathasdisp} the path categories $\mathcal{C}$ and $\mathcal{C}_!$ are equivalent in $\sf{PathCat}$. 
    
    So, we prove that, we obtain a family of fibrations for a path categorial structure, i.e. a clan:
    \begin{enumerate}
        \item Let $\Gamma$ be a context. Being $\mathcal{C}_!$ rooted $\Gamma \cong 1.A_1.A_2.\cdots.A_n$ for some --- possibly null --- natural $n$ and some type $A_i$ in context $1.A_1.  \dots . A_{i-1}$ for every $i= 1, \dots,n$. Since $(\mathcal{C},\mathcal{T},F,p)$ has $\Sigma$-types with $\beta,\eta$, $1.A_1.  \dots .A_n\cong 1.\Sigma_{A_1}\Sigma_{A_2}  \dots \Sigma_{A_{n-1}}A_n$ hence we factored $\Gamma\to 1$ as $\Gamma \iso 1.A_1.A_2.  \dots .A_n\iso 1.\Sigma_{A_1}\Sigma_{A_2}  \dots \Sigma_{A_{n-1}}A_n \disp 1$ which is a fibration, according to our notion.
        
        \item Let $\Omega\fib \Delta\fib \Gamma$ be fibrations. Let us verify that their composition is a fibration as well in the possible four cases, (i), (ii), (iii) and (iv).
        
        (i) If $\Omega\iso \Delta$ is an isomorphism, then we are done.
        
        (ii) If $\Delta\fib \Gamma$ is a display map $\Gamma.A\disp\Gamma$ and $\Omega \fib \Delta$ is either of the form $\Omega=\Delta.B\disp\Delta$ or $\Omega\iso\Delta.B\disp\Delta$ then there exists a commutative diagram of the form: \[\begin{tikzcd}[column sep=small, row sep=tiny]
        	\Omega && {\Gamma.A.B} && {\Gamma.\Sigma_AB} \\
        	\\
        	&& {\Gamma.A} && \Gamma
        	\arrow[iso, from=1-1, to=1-3]
        	\arrow[from=3-3, to=3-5]
        	\arrow[from=1-3, to=3-3]
        	\arrow[from=1-5, to=3-5]
        	\arrow[iso, from=1-3, to=1-5]
        \end{tikzcd}\] because $\mathcal{C}_!$ has extensional $\Sigma$-types, hence we are done.
        
        (iii) If $\Delta\fib\Gamma$ is of the form $\Delta\xrightarrow{f} \Gamma.A\disp\Gamma$, where $f$ is an isomorphism, and $\Omega \fib \Delta$ is either of the form $\Omega=\Delta.B\disp\Delta$ or $\Omega\iso\Delta.B\disp\Delta$ then the diagram: \[\begin{tikzcd}[column sep=small, row sep=tiny]
        	\Omega && {\Delta.B} && {\Gamma.A.B[f^{-1}]} \\
        	\\
        	&& \Delta && {\Gamma.A} && \Gamma
        	\arrow[from=1-3, to=3-3]
        	\arrow["{f^{-1}}", from=3-5, to=3-3]
        	\arrow[from=1-5, to=3-5]
        	\arrow[iso, from=1-3, to=1-5]
        	\arrow["\lrcorner"{anchor=center, pos=0.125, rotate=-90}, draw=none, from=1-5, to=3-3]
        	\arrow[iso, from=1-1, to=1-3]
        	\arrow[from=3-5, to=3-7]
        \end{tikzcd}\] commutes, hence $\Omega\fib\Delta\fib\Gamma$ equals $\Omega\iso\Delta.B\iso\Gamma.A.B[f^{-1}]\disp\Gamma.A\disp\Gamma$ and we are done by (ii).
        
        (iv) If $\Delta\iso\Gamma$ is an isomorphism and $\Omega \fib \Delta$ is either of the form $\Omega=\Delta.B\disp\Delta$ or $\Omega\iso\Delta.B\disp\Delta$ then we are done by following the same argument as (iii).
        
        \item The pullback of an isomorphism exists along every arrow and every pullback of an isomorphism along some arrow is an isomorphism, hence a fibration. Now, if a fibration $f$ is of the form $\Omega\iso \Gamma.A\disp\Gamma$ and $\sigma$ is any arrow $\Delta\to\Gamma$, then, the outer square:
        \[\begin{tikzcd}
        	{\Delta.A[\sigma]} && {\Gamma.A} & \Omega \\
        	&&& {\Gamma.A} \\
        	\\
        	\Delta &&& \Gamma
        	\arrow[from=1-1, to=1-3]
        	\arrow[from=1-1, to=2-4]
        	\arrow[from=1-1, to=4-1]
        	\arrow[iso, from=1-3, to=1-4]
        	\arrow[iso, from=1-4, to=2-4]
        	\arrow[from=2-4, to=4-4]
        	\arrow[""{name=0, anchor=center, inner sep=0}, "\sigma"', from=4-1, to=4-4]
        	\arrow["\lrcorner"{anchor=center, pos=0.125}, draw=none, from=1-1, to=0]
        \end{tikzcd}\] is a pullback square. Hence the display map $\Delta.A[\sigma]\to\Delta$ --- for some re-indexing of $A$ along $\sigma$ according to $\cal C_!$ --- is a pullback of $f$ along $\sigma$ and it is a fibration. Moreover, by the universal property of pullbacks, any other pullback of \( f \) along \( \sigma \) is equal to the precomposition of \( \Delta.A[\sigma] \to \Delta \) with an isomorphism, and thus is itself a fibration.
        
        \item Finally, by repletion, identities and isomorphisms are fibrations since $\mathcal{C}_!$ has extensional $\bb1$-types. \qedhere
        \end{enumerate}

        \endproof
\end{document}